\newtheorem{thm}{Theorem}[section]
\newtheorem{lemma}[thm]{Lemma}
\newtheorem{cor}[thm]{Corollary}
\newtheorem{prop}[thm]{Proposition}
\newtheorem{defi}[thm]{Definition}
\newtheorem{conj}[thm]{Conjecture}
\theoremstyle{definition}
\newtheorem{rem}[thm]{Remark}
\def\per{{\pi}}
\def\tG{{\tilde{G}}}
\def\tH{{\tilde{H}}}
\def\tR{{\tilde{R}}}
\def\tDelta{{\tilde{\Delta}}}
\def\BB{{\mathbf{B}}}
\def\BF{{\mathbf{F}}}
\def\BG{{\mathbf{G}}}
\def\BL{{\mathbf{L}}}
\def\BT{{\mathbf{T}}}
\def\BU{{\mathbf{U}}}
\def\BZ{{\mathbf{Z}}}
\def\CA{{\mathcal{A}}}
\def\CB{{\mathcal{B}}}
\def\CC{{\mathcal{C}}}
\def\CE{{\mathcal{E}}}
\def\CF{{\mathcal{F}}}
\def\CO{{\mathcal{O}}}
\def\CP{{\mathcal{P}}}
\def\CT{{\mathcal{T}}}
\def\BR{{\mathbf{R}}}
\def\CS{{\mathcal{S}}}
\def\GA{{\mathfrak{A}}}
\def\GS{{\mathfrak{S}}}
\def\Aut{\operatorname{Aut}\nolimits}
\def\Br{\operatorname{Br}\nolimits}
\def\can{{\mathrm{can}}}
\def\Comp{\operatorname{Comp}\nolimits}
\def\en{{\mathrm{en}}}
\def\End{\operatorname{End}\nolimits}
\def\Ext{\operatorname{Ext}\nolimits}
\def\GL{\operatorname{GL}\nolimits}
\newcommand{\GU}{\operatorname{GU}}
\def\Ho{\operatorname{Ho}\nolimits}
\def\Hom{\operatorname{Hom}\nolimits}
\def\id{\operatorname{id}\nolimits}
\def\im{\operatorname{im}\nolimits}
\def\Ind{\operatorname{Ind}\nolimits}
\def\Irr{\operatorname{Irr}\nolimits}
\def\mMod{\operatorname{\!-mod}\nolimits}
\def\opp{{\operatorname{opp}\nolimits}}
\def\Out{\operatorname{Out}\nolimits}
\def\Pic{\operatorname{Pic}\nolimits}
\newcommand{\PGL}{\operatorname{PGL}}
\def\PSL{\operatorname{PSL}\nolimits}
\newcommand{\PSp}{\operatorname{PSp}}
\newcommand{\PSU}{\operatorname{PSU}}
\def\Res{\operatorname{Res}\nolimits}
\def\SL{\operatorname{SL}\nolimits}
\def\soc{\operatorname{soc}\nolimits}
\def\Sp{\operatorname{Sp}\nolimits}
\def\mstab{\operatorname{\!-stab}\nolimits}
\def\TrPic{\operatorname{TrPic}\nolimits}
\newcommand{\Proj}[1]{\mathcal P({#1})}
\newcommand{\Alt}[1]{\GA_{#1}}
\newcommand{\Sym}[1]{\GS_{#1}}
\newcommand{\Cyc}[1]{Z_{#1}}
\newcommand{\F}{\BF}
\newcommand{\Cent}{\operatorname{C}}
\def\eps{\varepsilon}
\def\iso{\buildrel \sim\over\to}
\def\ie{{\em i.e.}}
\title{Perverse equivalences and Brou\'e's conjecture}
\author{David A.~Craven and Rapha\"el Rouquier, University of Oxford}
\date\today
\begin{document}
\maketitle
\setcounter{tocdepth}{2}
\tableofcontents

\section{Introduction}

This paper proposes a new approach to the construction of derived equivalences, based on perverse equivalences. These equivalences, introduced by Joe Chuang
and the second author, aim to encode combinatorially the derived equivalence class of a block with respect to a fixed block
\cite{ChRou2}. The derived equivalences between blocks of symmetric groups constructed in \cite{ChRou1} are
compositions of perverse equivalences, and it is expected that the
conjectural derived equivalences provided by Deligne--Lusztig varieties for finite groups of Lie type in non-describing characteristic are perverse. This has motivated our search for perverse equivalences in the case
of sporadic groups, in the setting of
Brou\'e's abelian defect conjecture (cf.\ \cite{ChRi} for a survey on
 Brou\'e's conjecture). We
also consider certain finite groups of Lie type: in those cases,
we provide equivalences which should coincide with the conjectural
ones coming from Deligne--Lusztig varieties.

We provide lifts of stable equivalences to perverse equivalences. Our method
requires only calculations within the normalizer of a defect group. In the
cases we consider, Brou\'e's conjecture was already known to hold. The
equivalences we provide are different from the known ones, which were
usually not perverse. Also, to obtain perverse equivalences, we sometimes
need to change the ``obvious'' stable equivalence, given by Green
correspondence:
this depends on local data. For those groups of Lie type we consider,
this is dictated by the properties of Deligne--Lusztig varieties. We
consider finite groups with elementary abelian Sylow $3$-subgroups of
order $9$: for these groups, Green correspondence provides a stable equivalence.
In addition, we have a complete description of the local normalized 
self-derived equivalences, which enables us to parametrize splendid
self-stable equivalences.
Note that there are stable equivalences between blocks that can be lifted to
a derived equivalence but not to a
perverse derived equivalence. This occurs for example when all simple modules
of one of the blocks can be lifted to characteristic $0$ (that happens
for the local block in the setting of Brou\'e's conjecture), while the
decomposition matrix of the other block cannot be put in a triangular form.

\medskip
Our approach can also be viewed as an 
attempt to mimic the extra structure carried
by representations of finite groups of Lie type in non-describing
characteristic to the case of arbitrary finite groups. Our equivalences
depend on the datum of a perversity function $\per$, which is related
to Lusztig's $A$-function for finite groups of Lie type.

Extensions of equivalences through $\ell'$-groups of automorphisms are
easy to carry for perverse equivalences, and we devote an important part
of this paper to the study of extensions of equivalences. Our
main point is that checking that a two-sided tilting complex will extend
depends only on the underlying (one-sided) tilting complex. We deduce
from our results new methods to check that equivalences extend. This enables
us to show for instance that Brou\'e's abelian defect conjecture holds
for principal blocks in characteristic $2$.

\smallskip
Section \ref{se:construction} is devoted to constructing equivalences.
We first explain the description of the images of simple modules under perverse
equivalences associated to increasing perversity (\S \ref{se:perverse}), then
we explain our method for lifting stable equivalences to perverse
equivalences (\S \ref{se:lifts}). In \S \ref{se:stablelxl}, we explain
the construction of stable equivalences for principal blocks with elementary abelian Sylow
$\ell$-subgroups of order $\ell^2$. We describe in detail
the images of modules and we describe a family of stable equivalences
dependent on local perversity functions when $\ell=3$.
Finally, we recall in \S \ref{se:Lie} the setting of Brou\'e's conjecture
for finite groups of Lie type, where two-sided tilting complexes are
expected to arise from Deligne--Lusztig varieties. We study in particular
those finite groups of Lie type with a Sylow $3$-subgroup elementary
abelian of order $9$. We also recall Puig's construction of
equivalences for the case ``$\ell\mid(q-1)$''.

Section \ref{se:automorphismes} is devoted to automorphisms and extensions
of equivalences.  In \S \ref{se:stability}, we set up a general formalism that
allows a reduction to
finite simple groups for equivalences of a suitable type between the principal
block of a finite group with an abelian Sylow $\ell$-subgroup $P$, and
the principal block of $N_G(P)$. This is meant to encompass the various
forms of Brou\'e's abelian defect group conjecture. In \S \ref{se:auto}, we
provide extension theorems for equivalences. We give criteria that
ensure that a two-sided tilting complex can be made equivariant for
the action of a group of automorphisms; we recover results of Rickard and
Marcus.  We consider in particular (compositions of) perverse equivalences.
In \S \ref{se:extra}, we apply the general results of the previous sections
to various forms of Brou\'e's conjecture: derived equivalences, Rickard
equivalences, splendid or perverse properties, positivity of gradings and
perfect isometries. This provides us in \S \ref{se:principal}
with a general reduction theorem to simple groups, generalizing a result
of Marcus. We apply this to show that Brou\'e's conjecture can be
solved using perverse equivalences for certain cases when $\ell=2$ or
$3$.

This last result is obtained by a case-by-case study of finite simple groups
with elementary abelian Sylow $3$-subgroups of order $9$ in
Section \ref{se:defect33}.
We provide a perverse equivalence with the normalizer of a Sylow $3$-subgroup
for all such groups except $\GA_6$ and $M_{22}$, for which we need
the composition of two perverse equivalences.
Perverse equivalences are encoded in
global and local perversity functions. Note that these combinatorial
data determine the source algebra of the block up to isomorphism. While
Brou\'e's conjecture was known to hold in all cases considered (work
of Koshitani, Kunugi, Miyachi, Okuyama, Waki), we have
been led to construct a number of new equivalences.
In Section \ref{se:ell2}, we provide an analysis of simple groups with
abelian Sylow $2$-subgroups.

\smallskip
We thank Jean Michel and Hyohe Miyachi for useful discussions and help
with references.

\section{Notation and basic definitions}
\subsection{Algebras}
\subsubsection{Modules}
All modules are finitely generated left modules, unless otherwise specified.

Let $R$ be a commutative ring. We write $\otimes$ for $\otimes_R$. Given $q$ a prime power, $\BF_q$ denotes a finite field with $q$ elements.

Let $A$ be an $R$-algebra. We denote by $A^\opp$ the opposite algebra to $A$ and we put $A^\en=A\otimes A^\opp$. Given an $R$-module $M$, we put $AM=A\otimes M$, an $A$-module. We denote by $\CS_A$ a complete set of representatives of
isomorphism classes of simple $A$-modules.

Let $M$ be a finitely generated module over an artinian algebra. The \emph{head} of $M$ is defined to be its largest semi-simple quotient. We denote by $I_M$ (resp.\ $P_M$ or $\Proj{M}$) an injective hull (resp.\ a projective cover) of $M$. We denote by $\Omega(M)$ the kernel of a surjective map $P_M\to M$ and by $\Omega^{-1}(M)$ the cokernel of an injective map $M\to I_M$. We define by induction $\Omega^i(M)=\Omega\bigl(\Omega^{i-1}(M)\bigr)$ and $\Omega^{-i}(M)=
\Omega^{-1}\bigl(\Omega^{-i+1}(M)\bigr)$ for $i>1$.

Let $\sigma:B\to A$ be a morphism of algebras and let $M$ be an $A$-module. We define a $B$-module ${_\sigma M}$: it is equal to $M$ as a $k$-module, and the action of $b\in B$ given by the action of $\sigma(b)$ on $M$.

The algebra $A$ is \emph{symmetric} if it is finitely generated and projective as an $R$-module, and if $\Hom_R(A,R)\simeq A$ as
$A^\en$-modules.

\subsubsection{Categories}

We denote by $A\mMod$ the category of finitely generated $A$-modules.

Let $\CC$ be an additive category and $\CA$ an abelian category.
We denote by
\begin{itemize}
\item $\Comp(\CC)$ the category of complexes of objects of $\CC$,
\item $\Ho(\CC)$ the homotopy category of $\Comp(\CC)$, and
\item $D(\CA)$ the derived category of $\CA$.
\end{itemize}

A complex in $\CC$ is \emph{contractible} if it is $0$ in $\Ho(\CC)$, and a complex in $\CA$ is \emph{acyclic} if it is $0$ in $D(\CA)$. We write $\Comp(A)$ for $\Comp(A\mMod)$, and so on.

We write $0\to M\to N\to \cdots\to X\to 0$ (or sometimes
$M\to N\to \cdots\to X$) for a complex where $X\not=0$ is
in degree $0$.

Given $M,N\in\Comp(\CC)$, we denote by
$\Hom^\bullet(M,N)$ the complex with degree $n$ term
$\bigoplus_{j-i=n}\Hom(M^i,N^j)$. We denote by $R\Hom^\bullet$ the
derived version.

\smallskip
Given two algebras $A$ and $B$, we say that a functor
$D^b(A)\to D^b(B)$ is {\em standard} if it is of the form
$C\otimes_A -$, where $C$ is a bounded complex of $(B,A)$-modules, 
finitely generated and projective as $B$-modules and as $A^\opp$-modules.

A {\em tilting complex} $C$ for $A$ is a perfect complex of $A$-modules
(\ie, quasi-isomorphic to a bounded complex of finitely generated
projective $A$-modules) such that $A$ is in the thick subcategory of
$D(A)$ generated by $C$ and $\Hom_{D(A)}(C,C[i])=0$ for $i\not=0$.
% given $D\in D(A)$ a non-zero object, then there
%is $n\in\BZ$ such that $\Hom_{D(A)}(C,D[n])\not=0$.

A {\em two-sided tilting complex} $C$ for $(A,B)$ is a bounded complex of
$(A,B)$-bimodules such that the functor
$C\otimes_B^\BL -:D(B)\to D(A)$ is an equivalence.

A {\em Rickard complex} $C$ for $(A,B)$ is a bounded complex of
$(A,B)$-bimodules, finitely generated and projective as left $A$-modules
and as right $B$-modules, such that the functor
$C\otimes_B -:\Ho(B)\to \Ho(A)$ is an equivalence.

\smallskip
Assume that $A$ is a symmetric $R$-algebra. We denote by
$A\mstab$ the stable category, the triangulated quotient of $D^b(A)$ by the thick subcategory
of perfect complexes. The canonical functor $A\mMod\to A\mstab$ identifies
$A\mstab$ with the additive quotient of $A\mMod$ by the subcategory of
projective modules.

Let $B$ be a symmetric $R$-algebra. A bounded
complex $C$ of $(A,B)$-bimodules {\em induces a stable equivalence} if its
terms are projective as left $A$-modules and as right $B$-modules, and
there are isomorphisms of complexes of bimodules $\End^\bullet_A(C)\simeq B\oplus R_1$
and $\End^\bullet_{B^\opp}(C)\simeq A\oplus R_2$, where $R_1$ and $R_2$ are
homotopy equivalent to bounded complexes of projective bimodules. There is
an equivalence $C\otimes_B -:B\mstab\iso A\mstab$.

We say that a Rickard complex $C$ {\em lifts} a complex $D$ inducing
a stable equivalence if $C$ and $D$ are isomorphic in the quotient of
$\Ho^b(A\otimes B^\circ)$ by its thick subcategory of complexes
of projective modules.

\subsection{Modular setting}
We will denote by $\CO$ a complete discrete valuation ring with 
residue field $k$ of characteristic $\ell>0$ and field of fractions $K$
of characteristic $0$. 

\subsection{Groups}
We denote by $\Cyc{n}$ a cyclic group of order $n$, by $D_n$ a dihedral group of
order $n$, by $SD_n$ a semi-dihedral group of order $n$, and by $\GA_n$ and $\GS_n$ the alternating and symmetric groups of degree $n$ respectively. If $G$ is a finite group, we denote by $G^\opp$ the opposite group
and we set $\Delta G=\{(g,g^{-1}) \mid g\in G\}\le G\times G^\opp$.

We denote by $b_0(G)$ the principal block idempotent of $\BZ_\ell G$ and
by $B_0(G)=b_0(G)\BZ_\ell G$ the principal block algebra. Given an $\CO G$-module $M$ and an $\ell$-subgroup $P$ of $G$,
we denote by $\Br_P(M)$ the image of $M^P$ in the coinvariants $M/\{g(m)-m\}_{m\in M,g\in P}$; this is an
$\CO(N_G(P)/P)$-module.

\smallskip
Let $R$ be either $\CO$ or $k$ and let $H$ be a finite group.  Assume that $H$ and $G$ have a
common Sylow $\ell$-subgroup $P$. We say that a bounded complex
$C$ of $RB_0(H\times G^\opp)$-modules is {\em splendid} if
its terms are direct summands of finite direct sums of modules of the form
$\Ind_{\Delta Q}^{H\times G^\opp}R$, where $Q\le P$.

\section{Constructions of equivalences}
\label{se:construction}
\subsection{Perverse equivalences}
\label{se:perverse}
We explain the constructions of \cite{ChRou2} (see also \cite[\S 2.6]{Rou5}).
\subsubsection{Definition}
Let $A$ and $B$ be two finite-dimensional algebras over a field $k$.
Fix $r\ge 0$, $q:[0,r]\to\BZ$ and fix filtrations $\emptyset=\CS_{-1}\subset\CS_0\subset\cdots\subset\CS_r=\CS_A$
and $\emptyset=\CS'_{-1}\subset\CS'_0\subset\cdots\subset\CS'_r=\CS_B$.
A functor $F:D^b(B)\to D^b(A)$ is \emph{perverse} relative to $(q,\CS_\bullet,\CS'_\bullet)$ if, whenever
$T$ is in $\CS'_i$, the composition factors of $H^{-j}\bigl(F(T)\bigr)$ are in $\CS_{i-1}$ for $j\not=q(i)$ and in $\CS_i$ for $j=q(i)$.
%$H^{-q(i)}(F(T))$ is the extension of
%an object with composition factors in $\CS_{i-1}$ by an object $f(T)$ in
%$\CS_i$ by an object with composition factors in $\CS_{i-1}$.

\smallskip
If $F$ is an equivalence, then
given $T\in\CS'_i$, the $A$-module
$H^{-q(i)}\bigl(F(T)\bigr)$ is the extension of
an object with composition factors in $\CS_{i-1}$ by an object $f(T)$ in
$\CS_i$ by an object with composition factors in $\CS_{i-1}$.
The map $f$ gives a bijection $\CS_B\iso \CS_A$ compatible
with the filtrations.

\subsubsection{Increasing perversity}
\label{se:increasingperversity}
Let $A$ be a symmetric $k$-algebra.
Given $M\in A\mMod$ and $E\subset \CS=\CS_A$, we denote by
$M^E$ the largest submodule $N$ of $I_M$ containing $M$ and such that
all composition factors of $N/M$ are in $E$.

Consider a map $\per:\CS\to\BZ_{\ge 0}$, and let $S\in\CS$. We define a complex of $A$-modules $C=C_S
=0 \to C^{-\per(S)}\to\cdots \to C^0\to 0$.

$\bullet\ $If $\per(S)=0$ then we set $C^0=S$.

$\bullet\ $Assume that $\per(S)>0$.
We put $C^{-\per(S)}=I_S$. Let $E=\per^{-1}([0,\per(S)-1])$.
Let $C^{-\per(S)+1}=I_{\Omega^{-1}(S^E)}$. We define
$d^{-\per(S)}$ as the composition of canonical maps
$C^{-\per(S)}=I_{S^E}\twoheadrightarrow\Omega^{-1}(S^E)
\hookrightarrow C^{-\per(S)+1}$.

Fix $0<i<\per(S)$.
Assume that $0\to C^{-\per(S)}\xrightarrow{d^{-\per(S)}}
C^{-\per(S)+1}\to\cdots\xrightarrow{d^{-\per(S)+i-1}} C^{-\per(S)+i}$
has been constructed with the property that
$C^{-\per(S)+i}=I_T$, where $T=\im d^{-\per(S)+i-1}$.
Let $E=\per^{-1}([0,\per(S)-i-1])$ and
$C^{-\per(S)+i+1}=I_{\Omega^{-1}(T^E)}$. We define
$d^{-\per(S)+i}$ as the composition of canonical maps
$C^{-\per(S)+i}=I_{T^E}\twoheadrightarrow\Omega^{-1}(T^E)
\hookrightarrow C^{-\per(S)+i+1}$.

Finally, let $T=\im d^{-2}$, $E=\per^{-1}(0)$ and $C^0=\Omega^{-1}T^E$. We 
define $d^{-1}$ as the canonical map $C^{-1}=I_{T^E}\twoheadrightarrow C^0$.

\smallskip
There is a symmetric $k$-algebra $B$, well defined up to Morita
equivalence, and a standard equivalence
$F:D^b(B)\iso D^b(A)$ such that
$\{F(T)\}_{T\in\CS_B}=\{C_S\}_{S\in\CS_A}$. This 
equivalence is perverse, relative to the filtration $\emptyset\subset \per^{-1}(0)\subset \per^{-1}([0,1])\subset
\per^{-1}([0,2])\subset\cdots\subset\CS_A$ and the corresponding filtration on
$\CS_B$, and relative to the constant perversity function $i\mapsto i$.
Note conversely that given a perversity datum $(q,\CS_\bullet,\CS'_\bullet)$
where $q$ is increasing, the perverse equivalence arises from a 
function $\pi$ where $\pi(S)=\min\{q(n)\mid S\in\CS_n\}_{n\ge 0}$.
We write
$$B\xrightarrow{\per}A$$
to denote the perverse equivalence.

\subsubsection{Elementary equivalences}
\label{se:elementary}
Assume that $\per(\CS)=\{0,1\}$. We will describe a tilting complex for $A$
in this case.

Let $U$ be the smallest submodule of the $A$-module $A$ such that
all composition factors of $A/U$ are in $\per^{-1}(0)$. 
Let $f:P_U\twoheadrightarrow U$ be a projective cover and let
$P_V$ be a projective cover of the largest submodule $V$ of $A/J(A)$ all
of whose composition factors are in $\per^{-1}(1)$.
Let $X=0\to P_V\oplus P_U\xrightarrow{(0,f)}A\to 0$ be a complex
of $A$-modules with $A$ in degree $0$. Let $B=\End_{\Ho(A)}(X)$. Then $B$
is a symmetric algebra and there is a standard perverse equivalence
$D^b(B)\iso D^b(A),\ B\mapsto X$.

\subsubsection{Perverse splendid equivalences}
\label{se:splendidperverse}

Let $G$ be a finite group with an abelian Sylow $p$-subgroup $G$, and let $H=N_G(P)$. Let $R$ be either $\CO$ or $k$.

Given $Q\le P$, let $\per_Q:\CS_{B_0(C_G(Q)/Q)}\to\BZ_{\ge 0}$ be a map. We assume that $\per_Q$ is invariant under $N_H(Q)$ and independent of $Q$ up to $H$-conjugacy.

\begin{defi}
An {\em increasing perverse splendid equivalence} between $RB_0(G)$ and $RB_0(H)$ relative to $\{\per_Q\}_Q$ is
a standard Rickard equivalence of the form $C\otimes_{RB_0(G)}-$, where
$C$ is splendid and such that for every $Q\le P$,
$\Br_{\Delta Q}(C)$ induces a perverse equivalence relative to
$\per_Q$ between $kB_0\bigl(C_G(Q)\bigr)$ and $kB_0\bigl(C_H(Q)\bigr)$.
\end{defi}

This definition generalizes immediately to the case of two arbitrary blocks
of two finite groups, using the general notion of splendid equivalences
\cite{Li3,Rou7}. An important property of splendid Rickard equivalences is that they lift
from $k$ to $\CO$ \cite[Theorem 5.2]{Ri5}.

\begin{thm}
Let $C$ be a splendid Rickard complex for $\bigl(kB_0(G),kB_0(H)\bigr)$. There
is a splendid Rickard complex $\tilde{C}$ for $\bigl(B_0(G),B_0(H)\bigr)$
such that $k\tilde{C}\simeq C$ in $\Comp\bigl(kB_0(G\times H^\opp)\bigr)$.
Furthermore,
$\tilde{C}$ is unique up to isomorphism.
\end{thm}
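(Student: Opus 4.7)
The strategy is to lift $C$ term by term to $\CO$, then lift the differentials inductively: the terms of a splendid complex are $p$-permutation modules, so each lifts uniquely, and the fact that $C$ is a Rickard complex provides the vanishing of negative self-extensions needed to kill the obstructions to assembling the terms into a complex. Uniqueness of $\tilde C$ will follow from the same vanishing.

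First I lift the terms. Each $C^i$ is a direct summand of a direct sum of modules of the form $\Ind_{\Delta Q}^{G\times H^\opp}k$ with $Q\le P$. Its canonical $\CO$-form $\Ind_{\Delta Q}^{G\times H^\opp}\CO$ has an $\ell$-adically complete endomorphism algebra, so any idempotent picking out $C^i$ lifts uniquely from $k$ to $\CO$. This produces trivial source $\CO B_0(G\times H^\opp)$-modules $\tilde C^i$ with $k\tilde C^i\simeq C^i$, unique up to isomorphism (this is the standard lifting theorem for $p$-permutation modules). Moreover $\Hom_{\CO B_0(G\times H^\opp)}(\tilde C^i,\tilde C^j)$ is $\CO$-free and reduces mod $\ell$ onto $\Hom(C^i,C^j)$, so any differential of $C$ admits an $\CO$-linear lift.

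Next I lift the differentials inductively on the length. Suppose the brutal truncation $\sigma_{>n}C$ has been lifted to a complex $\tilde C^{>n}$ of $\CO$-modules. Choose any lift $\tilde d^{\,n}:\tilde C^n\to\tilde C^{n+1}$ of $d^n$. The composite $\tilde d^{\,n+1}\tilde d^{\,n}$ lies in $\ell\Hom(\tilde C^n,\tilde C^{n+2})$ and, after division by $\ell$, yields an obstruction class in $\Hom_{\Ho(kB_0(G\times H^\opp))}(\sigma_{\le n}C,\sigma_{>n}C[2])$; modifying $\tilde d^{\,n}$ changes this class by the image of an element of the corresponding $[1]$-shifted group. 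Because $C$ is a Rickard complex, $\End^\bullet_{\Ho}(C)$ is concentrated in degree $0$; the truncation triangle $\sigma_{\le n}C\to C\to \sigma_{>n}C\rightsquigarrow$ and the induced long exact sequence of Hom then force both the obstruction and the indeterminacy groups to vanish, so the lift extends. The same $\Hom_{\Ho}(C,C[-1])=0$ argument shows any two lifts are homotopic and then isomorphic in $\Comp$, giving uniqueness.

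Finally I verify that $\tilde C$ is a splendid Rickard complex. Splendidness is built in. Each $\tilde C^i$ is projective as a left $\CO B_0(G)$-module and as a right $\CO B_0(H)$-module, hence $\End^\bullet_{\CO B_0(G)}(\tilde C)$ is a complex of $\CO$-free modules with $k\otimes_\CO\End^\bullet_{\CO B_0(G)}(\tilde C)\simeq\End^\bullet_{kB_0(G)}(C)$, which by hypothesis is homotopy equivalent to $kB_0(H)$; an idempotent-lifting and Nakayama argument over the complete local ring $\CO$ upgrades this homotopy equivalence to $\CO$, and the argument on the other side is symmetric. The hard part is the middle step: the obstruction analysis that lets the term-by-term lifts assemble into a chain complex. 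That is exactly where the Rickard hypothesis is essential, via the vanishing of $\Hom_{\Ho}(C,C[i])$ for $i\ne 0$; without it the trivial source lifts would not in general fit together.
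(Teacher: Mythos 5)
The paper does not actually prove this statement: it is quoted from Rickard \cite[Theorem 5.2]{Ri5}, and your outer strategy --- lift the terms uniquely as $p$-permutation modules, lift the differentials by an obstruction argument resting on the tilting property, then check the lift is again splendid Rickard --- is exactly the strategy of Rickard's proof. Your first and last steps are essentially right: trivial source modules lift uniquely, $\Hom$ between them is $\CO$-free and reduces surjectively modulo $\ell$ (this uses splendidness; it fails for general $\CO$-free bimodules), and a bounded complex of finitely generated $\CO$-free modules with contractible reduction is contractible, which upgrades the Rickard property from $k$ to $\CO$.

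The gap is in the middle step: your induction on the length of the complex does not work, for two reasons. First, $\tilde d^{\,n+1}\tilde d^{\,n}/\ell$ is an element of $\Hom_{\CO}(\tilde C^n,\tilde C^{n+2})$, and to make $\tilde d^{\,n+1}\tilde d^{\,n}=0$ you must write it as $\tilde d^{\,n+1}h$ \emph{exactly over} $\CO$; killing its reduction modulo $\ell$ only improves divisibility by one power of $\ell$, so a single correction never finishes. Second, even the mod-$\ell$ obstruction group you name, $\Hom_{\Ho}(\sigma_{\le n}C,\sigma_{>n}C[2])$, is not forced to vanish by $\Hom_{\Ho}(C,C[i])=0$ for $i\ne 0$: the long exact sequence of the truncation triangle passes through $\Hom_{\Ho}(C,\sigma_{\le n}C[1])$, which has no reason to vanish. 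Concretely, for a three-term complex the last obstruction is a map $\tilde C^{-2}\to\tilde C^0$ that must lie in the image of $\tilde d^{\,-1}\circ(-)$, and nothing in the hypotheses guarantees this. The correct induction is $\ell$-adic, on the \emph{whole} complex at once: choose lifts of all differentials, so $\tilde d^{\,2}=\ell e$ with $e$ a degree-$2$ cycle whose reduction gives a class in $\Hom_{\Ho(kB_0(G\times H^\opp))}(C,C[2])$; this group vanishes (note it is the \emph{bimodule} homotopy group --- its vanishing follows from the Rickard property via the symmetric-algebra adjunction $\End^\bullet_{A\otimes B^\opp}(C)\simeq \Hom^\bullet_{B^\en}\bigl(B,\End^\bullet_A(C)\bigr)$, a point you should not elide); replacing $\tilde d$ by $\tilde d-\ell h$ then improves $\tilde d^{\,2}\equiv 0$ from modulo $\ell$ to modulo $\ell^2$, and iterating, the corrections converge by completeness of $\CO$. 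Uniqueness is the same successive approximation using $\Hom_{\Ho}(C,C[1])=0$ (not $[-1]$), producing an isomorphism of complexes as a convergent product of elementary automorphisms rather than by ``homotopic hence isomorphic''.
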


\begin{rem}
One can normalize the equivalence by assuming $\pi_P=0$ and
$\pi_Q(k)=0$ for all $Q$.

Also, if $\tG$ is a finite group containing $G$ as a normal subgroup of
$\ell'$-index, then one can ask for an equivariant form of the definition above
by requiring the maps $\pi_Q$ to be invariant under the action of $\tG$.
\end{rem}

\subsection{Lifts of stable equivalences}
\label{se:lifts}
\subsubsection{Recognition criteria}
\label{se:recognition}
Let $A$ and $A'$ be two symmetric algebras over a field $k$, with no
simple direct factors, and let
$L:D^b(A')\to D^b(A)$ be a standard functor
inducing a stable equivalence $\bar{L}:A'\mstab\iso A\mstab$.

Let $\per:\CS\to\BZ_{\ge 0}$. There is a symmetric algebra $B$ and
a standard perverse equivalence $F:D^b(B)\iso D^b(A)$. Assume that $\{F(T)\}_{T\in \CS_B}$ coincides, up to isomorphism in
$A\mstab$, with $\{L(S')\}_{S'\in\CS_{A'}}$.

\smallskip
The composition $F^{-1}L:D^b(A')\to D^b(B)$ is given by
tensoring by a complex $X$ of $(B,A')$-bimodules. There is
a $(B,A')$-bimodule $M$ with no non-zero projective direct summand,
projective as a $B$-module and as a right $A'$-module,
that is isomorphic to $X$ in $(B\otimes A^{\prime\opp})\mstab$. The functor
$M\otimes_{A'}-:A'\mstab\to B\mstab$ is an equivalence and it preserves
isomorphism classes of simple modules. Since $M$ has no non-zero projective direct summand, it follows that $M\otimes_{A'}S'$ is indecomposable whenever $S'\in\CS_{A'}$ and we deduce that we have an equivalence $M\otimes_{A'}-: A'\mMod\iso B\mMod$ \cite[Theorem 2.1]{Li2}.
The composition
$G=F\circ (M\otimes_{A'}-):D^b(A')\iso D^b(A)$ is a standard perverse
equivalence lifting $\bar{L}$.

\smallskip
Let $k_0$ be a subfield of $k$ such that the extension $k/k_0$ is separable.
Let $A_0$ and $A'_0$ be two symmetric
$k_0$-algebras such that $A=kA_0$ and $A'=kA'_0$. Assume that
\begin{itemize}
\item  there is a standard 
functor $L_0:D^b(A'_0)\to D^b(A_0)$ with $L=kL_0$, and
\item given 
$S\in\CS_{A_0}$ and $S_1,S_2$ two simple direct summands of $kS$, then
$\per(S_1)=\per(S_2)$. 
\end{itemize}
The second assumption gives a function $\pi_0:\CS_{A_0}\to\BZ_{\ge 0},\
S\mapsto \pi(S_1)$ where $S_1$ is a simple direct summand of $kS$.
There is a symmetric $k_0$-algebra $B_0$ and a standard perverse
equivalence $D^b(B_0)\iso D^b(A_0)$.
As above, we obtain a standard stable
equivalence $A'_0\mstab\iso B_0\mstab$ that preserves semi-simple
modules, and hence simple modules. We deduce that there is a standard
perverse equivalence $G_0:D^b(A'_0)\iso D^b(A_0)$ such that $G_0$ and
$L_0$ induce isomorphic stable equivalences and such that $kG_0\simeq G$.

\subsubsection{Strategy}
\label{se:strategy}
Assume that we are given a stable equivalence $\bar{L}$ as above. Our strategy to lift
$\bar{L}$ to a derived equivalence is to look for a function $\per$ as in
\S \ref{se:increasingperversity} such that the set of $A$-modules
$\{C_S^0\}_{S\in\CS_A}$, coincides with the set
$\{\bar{L}(S')\}_{S'\in\CS_{A'}}$.

In the setting of Brou\'e's conjecture, we take for $A'$ a block with a
normal abelian defect group (for example,
$A'=k(P\rtimes E)$ where $k$ is a field
of characteristic $\ell$, $P$ is a abelian $\ell$-group and $E$ an
$\ell'$-group).
The determination of the $L(S')$ requires the determination of the Green
correspondents of simple modules: this computation is not directly feasible
for larger groups (for example $\PSU_5(4)$).
The calculation of the $C_S$ is a reasonable computational task.
A more tricky matter is the determination of the function $\per$. There are
constraints: the filtration
on $\CS_A$ should make the decomposition matrix of $A$ triangular. Also,
the datum $\per$ modulo $2$ should come from a perfect isometry.
As for the 
specific value of $\per$, we have proceeded by trying systematically all
possibilities, increasing progressively the values of $\per$.

\medskip
Let us explain this more precisely.
Let $G$ be a finite group, $A=\CO B_0(G)$ and $H$ another finite
group with principal block $B=\CO B_0(H)$. Assume that we are given a
standard equivalence $F:D^b(A)\iso D^b(B)$ inducing a perverse
equivalence $D^b(kA)\iso D^b(kB)$ relative to $\pi:\CS_B\to \BZ_{\ge 0}$.
Note that this provides a bijection $\CS_A\iso \CS_B$.
The map $I:K_0(KA)\iso K_0(KB)$ induced by $KF$ is a perfect isometry
\cite{Br2}.
There is a map $\eps:\CS_{KA}\to\{\pm 1\}$ and a bijection
$J:\CS_{KA}\iso\CS_{KB}$ such that
$I(\chi)=\eps(\chi) J(\chi)$ for $\chi\in\CS_{KA}$.
We have $\chi(1)\equiv \eps(\chi)J(\chi)(1)\pmod \ell$, if
$I(K)=K$.

Let $Z$ be a subset of $\CS_{KB}$ whose image in
$K_0(kB)$ by the decomposition map is a basis ($Z$ is a ``basic set'').
In this case, $J^{-1}(Z)$ is a basic set for $A$.

Assume now that the image of $Z$ in $K_0(kB)$ is the basis given by $\CS_B$; this provides a bijection $Z\iso\CS_B$. Given $V\in\CS_B$ corresponding to
$\psi\in Z$, we have $\pi(V)\equiv \eps(J^{-1}(\psi))\pmod 2$.

Define a partial order on
$\CS_B$ by $V>V'$ if $\pi(V)>\pi(V')$. This gives an order on $Z$
and on $\CS_A$.
Then the decomposition
of the irreducible characters in $J^{-1}(Z)$ is given by a
unitriangular matrix.

\subsection{Stable equivalences for $\ell\times \ell$}
\label{se:stablelxl}
\subsubsection{Construction of a complex of bimodules}
\label{se:constructionglue}
We recall the construction of \cite[\S 6.2]{Rou3}.
Let $G$ be a finite group, $\ell$ a prime and $P$ a Sylow
$\ell$-subgroup of $G$. We assume in this subsection that
$K$ contains all $|G|$-th roots of
unity. Let $H=N_G(P)$. 
We assume that $P$ is elementary abelian of order $\ell^2$
and $G$ is not $\ell$-nilpotent.

\smallskip
Let $Q$ be a subgroup of $P$ of order $\ell$.
Let $\bar{N}_H(Q)$ be a complement to $Q$ in $N_H(Q)$, so that 
$N_H(Q)=Q\rtimes \bar{N}_H(Q)$. Let $\bar{C}_H(Q)=C_H(Q)\cap \bar{N}_H(Q)\lhd
N_H(Q)$.
Let $\bar{N}_G(Q)$ be a complement to $Q$ in $N_G(Q)$ containing
$\bar{N}_H(Q)$.  Let $\bar{C}_G(Q)=C_G(Q)\cap \bar{N}_G(Q)\lhd N_G(Q)$.
We have $\bar{N}_G(Q)=\bar{C}_G(Q)\bar{N}_H(Q)$.

\medskip
Let $d$ be the distance from the edge corresponding to $k$
to the exceptional vertex in the Brauer tree of $kB_0\bigl(\bar{C}_G(Q)\bigr)$.
Let $\CE$ be the set of simple modules
(up to isomorphism) of $kB_0\bigl(\bar{C}_G(Q)\bigr)$
whose distance
to the exceptional vertex is $d+1\pmod 2$; hence,
$k{\not\in}\CE$.
Define an injection $\gamma:\CE\hookrightarrow\CS_{kB_0(\bar{C}_H(Q))}$:
$\gamma(S)$ is the unique simple
$k\bar{C}_H(Q)$-module such that
$\Hom_{k\bar{C}_H(Q)\mstab}\bigl(\Res_{\bar{C}_H(Q)}^{\bar{C}_G(Q)}S,
\gamma(S)\bigr)\not=0$.
The set $\CE$ and the map $\gamma$ are $\bar{N}_H(Q)$-stable.

\smallskip
Let $N_\Delta=\bigl(\bar{C}_H(Q)\times\bar{C}_G(Q)^\opp\bigr)\Delta \bar{N}_H(Q)$.
We have a decomposition of
$\BF_\ell N_\Delta$-modules 

$$b_0\bigl(\bar{C}_H(Q)\bigr)\BF_\ell k\bar{C}_G(Q)b_0\bigl(\bar{C}_G(Q)\bigr)=
M_Q\oplus P,$$
where
$P$ is projective and $M_Q$ restricts to an indecomposable 
$\bigl(\BF_\ell B_0\bigl(\bar{C}_H(Q)\bigr)\otimes 
\BF_\ell B_0\bigl(\bar{C}_G(Q)\bigr)^\opp\bigr)$-module
inducing a stable equivalence.
A projective cover of $kM_Q$ is of the form
$$\bigoplus_{S\in\CS_{kB_0(\bar{C}_H(Q))}}P_{\gamma(S)}\otimes P_S^*\to kM_Q$$
The map may be chosen so that its restriction to
$\bigoplus_{S\in\CE}P_{\gamma(S)}\otimes P_S^*$ is defined over $\BF_\ell$
and we obtain a complex
of $\BF_\ell N_\Delta$-modules
$$X=(0\to U_Q\xrightarrow{a} M_Q\to 0)$$
with $kU_Q=\bigoplus_{S\in\CE}P_{\gamma(S)}\otimes P_S^*$.
The restriction of $X$
to $\BF_\ell B_0\bigl(\bar{C}_H(Q)\bigr)\otimes
\BF_\ell B_0\bigl(\bar{C}_G(Q)\bigr)^\opp$ is a
Rickard complex. This is the complex
$C(M_Q,\CE)$ defined in a more general setting in \S \ref{se:Okuyama}.
We put $T_Q=U_Q\oplus P$, $f=a+\id$, and
$$D=
\bigl(0\to T_Q \xrightarrow{f} b_0\bigl(\bar{C}_H(Q)\bigr)\BF_\ell
\bar{C}_G(Q)b_0\bigl(\bar{C}_G(Q)\bigr)\to 0\bigr),$$
a complex of $\BF_\ell N_\Delta$-modules homotopy
equivalent to $X$.

\medskip
Define
$$T'_Q=
\Res_{N_{H\times G^\opp}(\Delta Q)}^{N_{H\times G^\opp}(\Delta Q)
/\Delta Q}\Ind_{N_\Delta}
^{N_{H\times G^\opp}(\Delta Q)
/\Delta Q}T_Q.$$
We have $T'_Q=\BF_\ell Q\otimes T_Q$:
the action of $Q^\en$ is the canonical action
on $\BF_\ell Q$, the action of $\bar{C}_H(Q)\times\bar{C}_G(Q)^\opp$ comes
from the action on $T_Q$ and the action of $\Delta N_H(Q)$ comes from
the tensor product of the actions on $\BF_\ell Q$ and $T_Q$.

The map $f$ provides by induction a morphism of
$\BF_\ell N_{H\times G^\opp}(\Delta Q)$-modules
$$f':T'_Q \to b_0\bigl(C_H(Q)\bigr)\BF_\ell C_G(Q)b_0\bigl(C_G(Q)\bigr)$$
 and the associated complex
gives a Rickard complex by restriction to
$\BF_\ell B_0\bigl(C_H(Q)\bigr)\otimes \BF_\ell B_0\bigl(C_G(Q)\bigr)^\opp$.

Consider finally the morphism of $\BF_\ell (H\times G^\opp)$-modules
$$g_Q:b_0(H\times G)
\Ind^{H\times G^\opp}_{N_{H\times G^\opp}(\Delta Q)}
T'_Q \to b_0(H)\BF_\ell Gb_0(G)$$
deduced from $f'$ by adjunction.
Then,
$$C=0\to \bigoplus_Q b_0(H\times G)
\Ind^{H\times G^\opp}_{N_{H\times G^\opp}(\Delta Q)}
T'_Q \xrightarrow{\sum g_Q} b_0(H)\BF_\ell Gb_0(G)\to 0$$
induces a stable equivalence between the principal blocks of
$\BF_\ell G$ and $\BF_\ell H$ \cite[Theorem 6.3]{Rou3}. Here, $Q$ runs over
subgroups of $P$ of order $\ell$ up to $H$-conjugacy.

\smallskip
Fix a decomposition $b_0(H)\BF_\ell Gb_0(G)=M\oplus R$, where $M$ is an
indecomposable $\BF_\ell(G\times H^\circ)$-module with vertex $\Delta P$ and
$R$ is a direct sum
of indecomposable modules with vertices strictly contained in $\Delta P$.
Then, $\Br_{\Delta Q}(M)\simeq M_Q$
for $Q$ a subgroup of $P$ of order $\ell$ (see \cite[Theorem 3.2]{Br1} for example).
We proceed with the construction above with $T_Q$ replaced by $U_Q$:
define $U'_Q=
\Res_{N_{H\times G^\opp}(\Delta Q)}^{N_{H\times G^\opp}(\Delta Q)
/\Delta Q}\Ind_{N_\Delta}
^{N_{H\times G^\opp}(\Delta Q) /\Delta Q}U_Q$. We obtain a complex
homotopy equivalent to $C$:
$$0\to \bigoplus_Q b_0(H\times G)
\Ind^{H\times G^\opp}_{N_{H\times G^\opp}(\Delta Q)}
U'_Q \to M\to 0.$$

\begin{rem}
\label{re:Moritacyclic}
Note that $U'_Q=0$ if
$N_H(Q)=C_H(Q)$, or if the Brauer tree of $kB_0\bigl(\bar{C}_G(Q)\bigr)$ is a star
with exceptional vertex in the centre (this happens for example if
$\ell=3$). In that case, $M_Q$ induces
a splendid Morita equivalence. If this holds for all subgroups $Q$ of $P$ of order $\ell$, then $M$ induces a splendid stable equivalence.
\end{rem}

\subsubsection{Images}
Let $L$ be a $kB_0(G)$-module. Let $Q$ be a subgroup
of $P$ of order $\ell$. We keep the notation of \S \ref{se:constructionglue}.
Let $\Gamma=N_\Delta \times N_G(Q)$.
We have an embedding $\alpha:N_H(Q) \hookrightarrow\Gamma,\ g\mapsto
\bigl((\bar{g},\bar{g}^{-1}),g\bigr)$, where $\bar{g}\in \bar{N}_H(Q)$
is the image of $g$.
The action of $\Gamma$ on
$T_Q\otimes \Res^G_{N_G(Q)}L$ restricts via $\alpha$ to an action of
$N_H(Q)$ on $T_Q\otimes_{\BF_\ell\bar{C}_G(Q)} L$, and $f$
induces
a morphism of $kN_H(Q)$-modules
$T_Q\otimes_{\BF_\ell\bar{C}_G(Q)}L\to \Res_{N_H(Q)}^G L$. By adjunction, this
provides a morphism of $kH$-modules 
\[h_Q:b_0(H)\Ind_{N_H(Q)}^H (T_Q\otimes_{\BF_\ell\bar{C}_G(Q)}L)\to
b_0(H)\Res_H^G L.\]
Thus
\[C\otimes_{\BF_\ell G}L\simeq 
\bigl(0\to \bigoplus_Q b_0(H)\Ind_{N_H(Q)}^H 
(T_Q\otimes_{\BF_\ell \bar{C}_G(Q)}L)
\xrightarrow{\sum h_Q} b_0(H)\Res_H^G L \to 0\bigr).\]

\subsubsection{Self-derived equivalences for $k\GS_3$}
\label{se:selfderived}
Let $G=\GS_3$, $\ell=3$ and $A=kG$.
Let $P_1$ be the projective cover of the trivial
$A$-module $S_1$ and $P_2$ the projective cover of the non-trivial simple
$A$-module $S_2$. A projective cover of $A$, viewed as an $A^{\en}$-module,
is 
$$P_1\otimes P_1^*\oplus P_2\otimes P_2^*\xrightarrow{b} A.$$
We set $C=C(A,\{S_2\})=
0\to P_2\otimes P_2^*\xrightarrow{b} A\to 0$ (cf.\ \S \ref{se:Okuyama}).
This is a Rickard complex. Given $n\ge 0$, the equivalence induced by
the Rickard complex $C^{\otimes_A n}$
is perverse relative to the function
$\per$ given by $\per(1)=0$ and $\per(2)=n$.

\smallskip
We have $C\otimes_A P_2\simeq P_2[1]$ in $\Ho(A)$.
Assume that $n>0$.
We deduce that 
$$\Hom_{\Ho(A)}(P_2,\Res_A C^{\otimes_An}[i])=
\Hom_{\Ho(A^\opp)}(P_2^*,\Res_{A^\opp} C^{\otimes_An}[i])=0
\text{ for }i\not=-n.$$
Thus the composition factors of $H^i(C^{\otimes_An})$ are isomorphic to
$S_1\otimes S_1^*$ for $i\not=-n$ and we deduce that there is an
isomorphism in $\Ho(A^\en)$:
$$C^{\otimes_A n}\simeq
0\to P_2\otimes P_2^*\to\cdots\to P_2\otimes P_2^*\xrightarrow{b} A\to 0$$
where the non-zero terms are in degrees $-n,\ldots,0$ and the
complex on the right is the unique indecomposable complex with the given terms
and with homology isomorphic to
$S_1\otimes S_1^*$ in degrees $-n+1,\ldots,0$.

\smallskip
Let $F$ be a standard self-equivalence of $D^b(A)$ such that $F(S_1)\simeq S_1$.
Then, $F$ is a perverse equivalence for a perversity function
$\per$ with $\per(1)=0$. Consequently, $F$ (or $F^{-1}$) is given by the
Rickard complex $C^{\otimes_A n}$ for some $n\ge 0$.

\begin{rem}
The group $\TrPic(A)$ of isomorphism classes of
standard self-derived equivalences of $A$ has been determined in
\cite[\S 4]{RouZi}. The result above on the subgroup of those self-equivalences
that fix the trivial representation can be deduced easily.
\end{rem}

\subsubsection{Local twists for $3\times 3$}
\label{se:localtwists}
The construction of stable equivalences in \S \ref{se:constructionglue}
builds on the
``simplest'' possible local derived equivalences. Assume that $\ell=3$; then we have $U_Q=0$ for all subgroups $Q$ of $P$ of order $\ell$ (see Remark \ref{re:Moritacyclic}).
We have $b_0(H)\BF_\ell Gb_0(G)=M\oplus R$, where $M$ is an indecomposable 
$\bigl(\BF_\ell B_0(G)\otimes \BF_\ell B_0(H)\bigr)$-module inducing a stable
equivalence and
the indecomposable summands of $R$ have vertex of order at most $\ell$
(Remark \ref{re:Moritacyclic}, see also \cite[Lemma 3.8]{koshitanikunugi2002}).

We explain here how to modify the stable equivalence induced by $M$ using a
self-stable equivalence of $B_0(H)$.
Let $\CT$ be a set of representatives of $H$-conjugacy classes of
subgroups $Q$ of $P$ of order $\ell$ such that $|C_H(Q)/C_H(P)|=2$.
Fix a map $\eta:\CT\to\BZ_{\ge 0}$.

\smallskip
Let $Q\in\CT$. There is an $\ell'$-subgroup $E'_Q$ of
$\bar{N}_H(Q)$ such that
$\bar{N}_H(Q)=\bigl(P\cap\bar{C}_H(Q)\bigr)\rtimes E'_Q$. Let $E_Q=
E'_Q\cap \bar{C}_H(Q)$. We have
$\bar{C}_H(Q)=\bigl(P\cap\bar{C}_H(Q)\bigr)\rtimes E_Q$.
Let $V_Q$
be a non-trivial simple $\BF_\ell B_0\bigl(\bar{N}_H(Q)\bigr)$-module with non-trivial
restriction to $E_Q$. Note that $\Res_{E_Q}V_Q$ is uniquely defined: this
is the $1$-dimensional non-trivial 
$\BF_\ell\bigl(E_Q/O_{\ell'}\bigl(\bar{C}_H(Q)\bigr)\bigr)$-module.

\smallskip
Let $N'_\Delta=\bar{C}_H(Q)^\en \Delta \bar{N}_H(Q)$.
The construction of \S \ref{se:selfderived} provides an indecomposable 
complex of $\BF_\ell N'_\Delta$-modules
$$X_Q=0\to P_{V_Q}\otimes P_{V_Q}^*\to\cdots\to
P_{V_Q}\otimes P_{V_Q}^*\to \BF_\ell B_0\bigl(\bar{C}_H(Q)\bigr)\to 0,$$
where the non-zero terms are in degrees $-\eta(Q),\ldots,0$ and
whose restriction to
$\BF_\ell B_0\bigl(\bar{C}_H(Q)\bigr)^\en$ is a Rickard complex.

\smallskip
We proceed now as in \S \ref{se:constructionglue} to glue the complexes
$X_Q$.
We have $N_{H^\en}(\Delta Q)=Q^\en\times N'_\Delta$.
Let 
$$U'_Q=b_0(H^\en)\Ind_{N_{H^\en}(\Delta Q)}^{H^\en}
\bigl(\BF_\ell Q\otimes (P_{V_Q}\otimes P_{V_Q}^*)\bigr).$$
We have a bounded complex of
$\BF_\ell B_0(H^\en)$-modules
$$C'(\eta)=\bigl(\cdots\to \bigoplus_{Q\in\CT,\eta(Q)\ge 3} U'_Q
\xrightarrow{\sum h_Q}
\bigoplus_{Q\in\CT,\eta(Q)\ge 2} U'_Q
\xrightarrow{\sum h_Q}
\bigoplus_{Q\in\CT,\eta(Q)\ge 1} U'_Q\to \BF_\ell B_0(H)\to 0\bigr)$$
which induces a self-stable equivalence of $\BF_\ell B_0(H)$, for certain 
$h_Q\in\End_{\BF_\ell(H^\en)}(U'_Q)$. We
put $C(\eta)=C'(\eta)\otimes_{\BF_\ell H}M$: this complex of
$\bigl(\BF_\ell B_0(H)\otimes \BF_\ell B_0(G)^\opp\bigr)$-modules induces a
stable equivalence.

\medskip
Let $L$ be a simple $\BF_\ell B_0(G)$-module, and let
$L'$ be the unique indecomposable direct summand of 
$b_0(H)\Res_H^GL$ with vertex $P$.
Given $Q\in\CT$, let
$$L'_Q=\Res_{\Delta(Q\rtimes E'_Q)}
\Bigl(\Res_{Q\rtimes E'_Q}^{(Q\rtimes E'_Q)/Q}V_Q\otimes 
\Res_{Q\rtimes E'_Q}^{(Q\rtimes E'_Q)/E_Q}
\bigl(\Hom_{\BF_\ell E_Q}(\Res_{Q\rtimes E'_Q}^{(Q\rtimes E'_Q)/Q}V_Q,
L')\bigr)\Bigr).$$
Thus, $L'_Q=V_Q\otimes \Hom_{\BF_\ell E_Q}(V_Q,L')$, the action of
$x\in Q$ is given by $v\otimes f\mapsto v\otimes xf$ and the action
of $y\in E'_Q$ is $v\otimes f\mapsto yv\otimes yfy^{-1}$, for
$v\in V_Q$ and $f\in \Hom_{\BF_\ell E_Q}(V_Q,L')$. We have a decomposition
$\Res_{Q\rtimes E'_Q}(L')=L_Q^1\oplus L_Q^2$, where $L_Q^1$ is the
maximal direct summand such that
$\Res_{E_Q}(L_Q^1)$ is a multiple of $V_Q$. Then $L'_Q\simeq
V_Q\otimes L_Q^1$.

Consider a decomposition $L'_Q=L''_Q\oplus P$ as $\BF_\ell(Q\rtimes E'_Q)$-modules,
where $P$ is projective. Let $L_Q= b_0(H)\Ind_{Q\rtimes E'_Q}^HL''_Q$.
We have an isomorphism
\begin{align*}
C(\eta)\otimes_{\BF_\ell G}L\simeq& 
\bigl(\cdots\to \bigoplus_{Q\in\CT,\eta(Q)\ge 3} L_Q\xrightarrow{\sum \Ind(s_Q)}
\bigoplus_{Q\in\CT,\eta(Q)\ge 2} L_Q\xrightarrow{\sum \Ind(s_Q)}
\bigoplus_{Q\in\CT,\eta(Q)\ge 1} L_Q \to
L' \to 0\bigr)\\
&\oplus \text{ bounded complex of projective modules},
\end{align*}
where $s_Q\in\End_{\BF_\ell N_H(Q)}(\Ind^{N_H(Q)}_{Q\rtimes E'_Q}L''_Q)$ is
non-zero, but not invertible.

\begin{rem}
\label{re:onedimensional}
In the examples studied in Section \ref{se:defect33}, the maps in the complexes are
uniquely determined up to scalars, thanks to the fact that the following
conditions hold:
\begin{itemize}
\item when $\eta(Q)>0$, $\dim\Hom_{kH}(L_Q,L')=1$
\item when $\eta(Q)>1$, $\dim L''_Q=1$.
\end{itemize}
\end{rem}

Let $\tG$ be a finite group containing $G$ as a normal subgroup of
$\ell'$-index.
If the function $\eta$ is invariant under the 
action of $\tG$ on conjugacy classes of
subgroups of order $3$, then $C(\eta)$ extends to a
complex of $k\bigl((H\times G^\opp)\Delta N_{\tG}(P)\bigr)$-modules.

\begin{rem}
\label{re:fixedpointfree}
There are six conjugacy classes of $3'$-subgroups $E$ of $\GL_2(\BF_3)$ such
that $(\BF_3)^E=0$. They are determined by their isomorphism type:
$\Cyc2$, $\Cyc2^2$, $\Cyc4$, $D_8$, $Q_8$ and $SD_{16}$. Assume that all non-trivial elements of $E$ act fixed-point
freely on $(\BF_3)^2-\{0\}$: this corresponds to the types $\Cyc2$, $\Cyc4$ and
$Q_8$. Let $A=kP\rtimes E$, where $P=\Cyc3^2$. Let $M'$ be an
$A^\en$-module inducing a self-stable equivalence. By
\cite[Theorem 3.2]{CarRou} there is an integer $n$ such that
$\Omega^n_{A^\en}(M')$ induces a self-Morita equivalence. Let
$G$ be a finite group with Sylow $3$-subgroup $P$ and
with $N_G(P)/C_G(P)=E$. Let $\tG$ be a finite group containing $G$ as
a normal subgroup of $3'$-index. Let $C$ be a two-sided tilting complex
for $(A,kB_0(G))$. Let $D=\Hom_{kB_0(G)^\opp}^\bullet(C,M)$. This induces a
self-stable equivalence of $A$, so there is an integer $n$ and an
invertible $A^\en$-module $M''$ such that $\Omega^n(M'')\otimes_A D$
is isomorphic to $A$ in $A^\en\mstab$. Let
$C'=\Hom_A^\bullet(M''[-n],C)$: this is a two-sided tilting complex
for $\bigl(A,kB_0(G)\bigr)$ and it is isomorphic in the stable category to $M$. So,
up to shift and Morita equivalence, a two-sided tilting complex can be
assumed to lift a given stable equivalence.

Note that in such a finite simple group $G$, the automizer $E$ will be of type
$\Cyc4$, $D_8$, $Q_8$ or $SD_{16}$.
\end{rem}

\subsection{Lie type}
\label{se:Lie}
\subsubsection{Deligne--Lusztig varieties}
\label{se:DeligneLusztig}
For finite groups of Lie type in non-describing characteristic, Brou\'e
conjectured that a solution of the abelian defect conjecture will arise
from the complex of cohomology of a Deligne--Lusztig variety
\cite[\S 6]{Br2}. That
is known in very few cases, and in those cases, defect groups
are cyclic \cite{Rou4,BoRou,Du}. We recall now the setting and
constructions of \cite{BrMi}.

\medskip
Let $\BG$ be a reductive connected algebraic group 
endowed with an endomorphism $F$ such that
there is $\delta\in\BZ_{>0}$ with the property that $F^\delta$ is
a Frobenius endomorphism relative to an $\BF_{q^\delta}$ structure on
$\BG$. Here, $q\in\BR_{>0}$ and we assume there is a choice $q\in K$.
Let $G=\BG^F$. Let $W$ be the Weyl group of
$\BG$ and $B^+$ be the braid monoid of $W$. We denote by
$\phi$ the automorphisms of $W$ and $B^+$ induced by $F$. 
Let $w\mapsto {\bf w}:W\to B^+$ be the length-preserving lift
of the canonical map $B^+\to W$.
Let $\pi={\bf w}_0^2$, where $w_0$ is the longest element of $W$.

\smallskip
Let $\ell$ be a prime number that does not divide $q^{\delta}$, and let $P$ be a Sylow $\ell$-subgroup of $G$. We assume that $P$ is abelian
and $C_{\BG}(P)$ is a torus. Let $d$ be the multiplicative order of
$q$ in $k^\times$: this is a $\phi$-regular number for $W$. There
exists $b_d\in B^+$ such that $(b_d\phi)^d=\pi \phi^d$. Let
$B_d^+=C_{B^+}(b_d\phi)$, and
% Its image in $W$ has a natural action on $C_G(P)$.
let $Y(b_d)$ be the corresponding Deligne--Lusztig variety. The
complex $C=R\Gamma\bigl(Y(b_d),\CO\bigr)b_0(G)$ has an action of
$C_G(P)\times G^\opp$. It is conjectured that
\begin{itemize}
\item the action extends (up to
homotopy) to an action of $\bigl(C_G(P)\rtimes B_d^+\bigr)\times G^\opp$, and
\item the canonical map $\CO\bigl(C_G(P)\rtimes B_d^+\bigr)\to
\End_{D(\CO G^\opp)}^\bullet(C)$
is a quasi-isomorphism of algebras, with image isomorphic to
$\CO B_0\bigr(N_G(P)\bigl)$.
\end{itemize}

It is conjectured further \cite{ChRou2} that these equivalences are
perverse. Let us explain how the maps $\per_Q$ of \S \ref{se:splendidperverse}
are encoded in the geometry.

Given $\chi$ a unipotent character of $G$, let $A_\chi$ denote the degree of its
generic degree.
Conjecturally, if $\BG$ has connected centre and $\ell$ is good,
the unipotent characters in $B_0(G)$ form a basic set and the
decomposition matrix of $B_0(G)$ is unitriangular with respect to that
basic set, for the order given by the function $A$ (cf
\cite[Conjecture 3.4]{GeHi} and \cite[Conjecture 1.3]{Ge}).
This
gives a bijection between $\CS_{B_0(G)}$ and the set of unipotent characters
in $B_0(G)$. The function $\per_1$ should be given by the unique
degree of cohomology of $Y(b_d,K)$
where the corresponding unipotent character occurs.

\smallskip
The complex $C$ has a canonical representative $\tR\Gamma\bigl(Y(b_d),\CO\bigr)$
in $\Ho\bigl(\CO\bigl(C_G(P)\times G^\opp\bigr)\bigr)$ that is splendid \cite{Ri4,Rou4} and
given $Q$ a subgroup of $P$, we have
$k\Br_{\Delta Q}\bigl(\tR\Gamma\bigr(Y(b_d)\bigl)\bigl)\simeq 
\tR\Gamma\bigl(Y(b_d)^{\Delta Q},k\bigr)$.
Hence, the local derived equivalences are controlled by 
Deligne--Lusztig varieties associated with Levi subgroups of $\BG$ and
this gives a corresponding description for the functions $\per_Q$.

\medskip
There is a conjecture for the unipotent part of the
cohomology of Deligne--Lusztig varieties associated with powers
of ${\bf w}_0$. There is no conjecture yet for other roots of powers of $\pi$.
For applications to Brou\'e's conjecture, the conjecture below covers the
cases $\ell\mid(q\pm 1)$.

\begin{conj}[{\protect \cite[\S 3.3.23]{DiMiRou}}]
Let $\chi$ be a unipotent character of $G$.
Given $n,i\ge 0$, if
$[H^i(Y({\bf w}_0^n),K):\chi]\not=0$, then $i=n A_\chi$.
\end{conj}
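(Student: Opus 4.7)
The plan is to argue by induction on $n$, reducing to the base case $n=1$ via the identity ${\bf w}_0^n={\bf w}_0\cdot {\bf w}_0^{n-1}$ in $B^+$. Following the Deligne--Lusztig formalism of \cite{BrMi}, this factorization yields a correspondence fibering $Y({\bf w}_0^n)$ (up to a torus bundle) over a fibre product of $Y({\bf w}_0)$ and $Y({\bf w}_0^{n-1})$ above the flag variety of $\BG$. A Leray-type spectral sequence should then express $H^\bullet\bigl(Y({\bf w}_0^n),K\bigr)$ as a convolution of $H^\bullet\bigl(Y({\bf w}_0),K\bigr)$ and $H^\bullet\bigl(Y({\bf w}_0^{n-1}),K\bigr)$ compatible with the $G$-action. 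Granting the conjecture for $n=1$ and for $n-1$, an occurrence of $\chi$ in $H^i\bigl(Y({\bf w}_0^n),K\bigr)$ would force $i = A_{\chi'}+(n-1)A_{\chi''}$ for pairs $(\chi',\chi'')$ matched by the convolution, and the task becomes to show that only pairs with $A_{\chi'}=A_{\chi''}=A_\chi$ can contribute.

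For the base case $n=1$, I would proceed via weights. By Deligne's purity theorem, $F^\delta$ acts on $H^i_c\bigl(Y({\bf w}_0),\bar K\bigr)$ with weights $\le i\delta$ and dually on $H^i\bigl(Y({\bf w}_0),\bar K\bigr)$ with weights $\ge i\delta$. Lusztig's theory attaches to each unipotent $\chi$ a Frobenius eigenvalue of absolute value $q^{\delta A_\chi}$ governing the action of $F^\delta$ on the $\chi$-isotypic part of the virtual cohomology of $Y({\bf w}_0)$. Combining these two constraints should pin the cohomological degree of occurrence of $\chi$ to $i=A_\chi$. A uniform version of this statement, identifying $A_\chi$ with the weight of Frobenius on $\chi$-isotypes in Deligne--Lusztig cohomology, is available for classical groups and via case analysis for exceptional ones; alternatively, one could try to read it off the description of $H^\bullet\bigl(Y({\bf w}_0),K\bigr)$ as a bimodule over the Iwahori--Hecke algebra attached to $(G,B)$.

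The main obstacle is the inductive step. The fibre-product description of $Y({\bf w}_0^n)$ is $G$-equivariant, but its decomposition into unipotent isotypes interacts non-trivially with Lusztig families: the $A$-function is not obviously additive under the relevant convolution, so expressing $A_\chi$ in terms of the $A$-values of $\chi'$ and $\chi''$ requires genuine extra input, probably from the theory of character sheaves and Alvis--Curtis duality. A plausible route is to track cohomological degrees and weights of each family through the convolution, using that $F^\delta$ acts by a scalar on each isotype and that the convolution preserves weight inequalities; one then hopes that the weight bookkeeping forces the desired additivity. Establishing this clean combinatorial matching seems to be precisely what is missing, and its absence is presumably why the conjecture in \cite{DiMiRou} remains open in general.
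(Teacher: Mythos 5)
The statement you are trying to prove is a \emph{conjecture}: the paper merely records it, quoting \cite[\S 3.3.23]{DiMiRou}, and offers no proof (none is known). So there is no argument of the authors to compare yours against, and your proposal does not close the gap either --- as you yourself concede in your final sentence. Two concrete problems. First, the base case $n=1$ is already open in general: Deligne's purity theorem and the known Frobenius eigenvalues (of absolute value $q^{\delta A_\chi}$ on the $\chi$-isotypic part) only constrain the set of degrees $i$ in which $\chi$ can occur in $H^\bullet\bigl(Y({\bf w}_0),\bar K\bigr)$; they give inequalities and congruences, not concentration in the single degree $i=A_\chi$. Establishing that concentration is precisely the hard content of the conjecture for $n=1$, and it is not ``available for classical groups'' as a routine consequence of purity --- the known cases in \cite{Lu,DiMiRou,Du} require substantial additional arguments (e.g.\ explicit quotients of the varieties, Lefschetz-type vanishing, or case-by-case computation).

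Second, the inductive step rests on an unproved structural claim. The factorization ${\bf w}_0^n={\bf w}_0\cdot{\bf w}_0^{n-1}$ does give $G$-equivariant maps between the associated varieties, but $H^\bullet\bigl(Y({\bf w}_0^n),K\bigr)$ is not a convolution of $H^\bullet\bigl(Y({\bf w}_0),K\bigr)$ with $H^\bullet\bigl(Y({\bf w}_0^{n-1}),K\bigr)$ in any sense that controls unipotent isotypic components degree by degree; the relevant spectral sequences mix families, and the additivity $i=A_{\chi'}+(n-1)A_{\chi''}\Rightarrow i=nA_\chi$ that you need is exactly the missing combinatorial input you identify. Since both the base case and the inductive step are unestablished, the proposal is a heuristic outline rather than a proof, which is consistent with the statement's status as an open conjecture.
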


Assume that $\ell=3$.
We consider now all groups $(\BG,F)$ such that $G$ is semi-simple and
$P\simeq (\Cyc{3})^2$. For each such group, and for each conjugacy class
of subgroups $Q$ of order $3$, we provide the semi-simple type of
$(C_{\BG}(Q),F)$ and we give an element $b$ in the braid monoid of
$C_{\BG}(Q)$ such that $Y_G(b_d)^{\Delta Q}=Y_{C_G(Q)}(b)$. We also
provide in some cases another semi-simple group and an element in the braid
monoid such that the Deligne--Lusztig variety can be identified equivariantly
with $Y_{C_G(Q)}(b)$ \cite[\S 1.18]{Lu}.

\begin{itemize}
\item $B_2$, $e=1$. 
\begin{itemize}
\item $A_1$, ${\bf s}^2$
\item $A_1$, ${\bf s}^2$
\end{itemize}
\item ${^2A_3}$, $e=1$
\begin{itemize}
\item $A_1$, ${\bf s}^2$
\item $(A_1\times A_1,(x,y)\mapsto (y,F(x))$, $({\bf s}^2,{\bf s}^2)$. This
is equivalent to $A_1,{\bf s}^4$.
\end{itemize}
\item ${^2A_4}$, $e=1$
\begin{itemize}
\item ${^2A}_2$, $({\bf st})^3$
\item $(A_1\times A_1,(x,y)\mapsto (y,F(x))$, $({\bf s}^2,{\bf s}^2)$. This
is equivalent to $A_1,{\bf s}^4$.
\end{itemize}
\item $A_3$, $e=2$.
\begin{itemize}
\item $A_1$, ${\bf s}$
\item $(A_1\times A_1,(x,y)\mapsto (y,F(x))$, $({\bf s},{\bf s})$.
This is equivalent to $A_1$, ${\bf s}^2$.
\end{itemize}
\item $A_4$, $e=2$.
\begin{itemize}
\item $A_2$, ${\bf sts}$
\item $(A_1\times A_1,(x,y)\mapsto (y,F(x))$, $({\bf s},{\bf s})$.
This is equivalent to $A_1$, ${\bf s}^2$.
\end{itemize}
\item $B_2$, $e=2$
\begin{itemize}
\item $A_1$, ${\bf s}$
\item $A_1$, ${\bf s}$
\end{itemize}
\end{itemize}

\medskip
Note that there are finite simple groups of Lie type with elementary abelian Sylow
$3$-subgroups of order $9$ that do not arise as
rational points of a reductive connected algebraic group, but as a quotient
of such a group. There are two classes of such groups:

\begin{itemize}
\item $G=\PSL_3(q)$ with $q\equiv 4,7\pmod 9$;
\item $G=\PSU_3(q)$ with $q\equiv 2,5\pmod 9$.
\end{itemize}

\subsubsection{Morita equivalences}
Let $G$ be a finite group and $\ell$ a prime.
Let $T$ be an $\ell$-nilpotent subgroup of $G$ with Sylow
$\ell$-subgroup $P$. Let $W=N_G(T)/T$.

We assume that
\begin{itemize}
\item $C_T(P)=C_G(P)$,
\item there is an $\ell'$-subgroup $U$ of $G$
such that $T\subset N_G(U)$, $T\cap U=1$ and $G=UTU$, and
\item $W$ is an $\ell'$-group.
\end{itemize}

Let us recall a result of Puig \cite[Corollaire 3.6]{Pu}.

\begin{thm}
\label{th:Puig}
The bimodule $e_U\BZ_\ell Gb_0(G)$ induces a Morita equivalence between
$B_0(G)$ and $B_0\bigl(N_G(P)\bigr)$, where $e_U=\frac{1}{|U|}\sum_{x\in U}x$.
\end{thm}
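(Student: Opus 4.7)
The plan is to verify the Morita equivalence in three steps: establishing the progenerator property, computing the endomorphism ring, and identifying it with $B_0(N_G(P))$.

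First, I would show that $e_U b_0(G)$ is a full idempotent in $B_0(G)$, i.e., that $B_0(G) e_U B_0(G) = B_0(G)$. The hypotheses $T\subseteq N_G(U)$ and $G=UTU$ together force $G=UT$: any $g = u_1 t u_2$ rewrites as $u_1(tu_2t^{-1})t \in UT$ via the normalization condition. Then for $g=ut$ we have $gUg^{-1} = u(tUt^{-1})u^{-1} = U$, so $U\lhd G$. Hence $U$ is a normal $\ell'$-subgroup of $G$, acting trivially on every simple $kB_0(G)$-module $L$, so $e_U L = L^U = L \ne 0$. This establishes the progenerator property, giving a Morita equivalence $B_0(G)\mMod \iso (e_U B_0(G) e_U)\mMod$ via the functor $e_U(-)$.

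Second, I would compute $e_U B_0(G) e_U$ using the decomposition $G = UT$ with $U\cap T = 1$. Since $T\subseteq N_G(U)$, we have $e_U t = t e_U$ for $t\in T$, so $e_U g e_U = t e_U$ for $g=ut\in UT$. The elements $\{t e_U\}_{t\in T}$ form a basis of $e_U \BZ_\ell G e_U$ (linear independence from $T\cap U = 1$), and $t\mapsto t e_U$ is an algebra isomorphism $\BZ_\ell T \iso e_U \BZ_\ell G e_U$. Under the splitting $\BZ_\ell T = \BZ_\ell(G/U) \hookrightarrow \BZ_\ell G$ (using that $U$ acts trivially on $B_0(G)$-modules), the central idempotent $b_0(G)$ corresponds to $b_0(T)$, yielding $e_U B_0(G) e_U \simeq B_0(T)$.

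Third, I would identify $B_0(T) \simeq B_0(N_G(P))$ using the remaining hypotheses ($T$ is $\ell$-nilpotent with Sylow $P$, $C_T(P) = C_G(P)$, and $W = N_G(T)/T$ is an $\ell'$-group). Since $G = U\rtimes T$ with $U$ an $\ell'$-group and $T$ being $\ell$-nilpotent, $G$ is itself $\ell$-nilpotent; by the classical theory for $\ell$-nilpotent groups with abelian Sylow $\ell$-subgroup, both $B_0(T)$ and $B_0(N_G(P))$ are Morita equivalent to the twisted group algebra of $P\rtimes(N_G(P)/C_G(P))$, and the conditions $C_T(P) = C_G(P)$ and $W$ being $\ell'$ match up the automizer data on the two sides to yield an explicit isomorphism $B_0(T) \iso B_0(N_G(P))$. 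The main obstacle is verifying that this abstract algebra isomorphism is realized concretely by the bimodule $e_U\BZ_\ell G b_0(G)$ via a compatible left action of $N_G(P)$, which requires lifting the inclusion $N_G(P)\hookrightarrow G$ through the splitting used in step two in a manner compatible with the $(U,U)$-double-coset basis.
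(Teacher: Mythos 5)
The first thing to note is that the paper offers no proof of this statement: it is quoted verbatim from Puig \cite[Corollaire 3.6]{Pu}, so there is no internal argument to compare yours against. Measured against Puig's actual argument, and against the example the paper gives immediately after the theorem, your proof has a fatal structural problem rather than a fixable gap.

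Your opening deduction --- that $G=UTU$ together with $T\subset N_G(U)$ forces $G=UT$ and hence $U\lhd G$ --- is logically valid, but it should have signalled that you are proving a degenerate statement. It makes $G$ an $\ell$-nilpotent group, forces $e_Ub_0(G)=b_0(G)$, leaves the hypotheses $C_T(P)=C_G(P)$ and $\ell\nmid|W|$ essentially idle, and flatly contradicts the paper's own main example, in which $U=\BU^F$ is the unipotent radical of a Borel subgroup and $T=\BT^F$ a maximal torus of $G=\BG^F$: there $U$ is very far from normal, and already for $\GL_2(q)$ one has $|UTU|\le q^2(q-1)^2<|G|$. The condition in the paper is a misprint for the sharp Bruhat decomposition $G=UN_G(T)U$ appearing in Puig's hypotheses. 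Under the correct hypothesis your Steps 1 and 2 collapse: $e_U\BZ_\ell Ge_U$ is not $\BZ_\ell T$ but a Yokonuma--Hecke-type algebra with $\BZ_\ell$-basis $\{e_Une_U\}_{n\in N_G(T)}$; the progenerator property (every simple $kB_0(G)$-module has nonzero $U$-fixed points) is a genuine theorem and not a consequence of normality; and the heart of the proof is the identification of the principal-block part of this Hecke algebra with $B_0(N_G(T))=B_0(N_G(P))$, which is where $\ell\nmid|W|$ and the congruences coming from $\ell\mid(q-1)$ actually enter. None of this is present in your argument. Finally, even within your own reading, Step 3 is incomplete: you invoke ``classical theory for $\ell$-nilpotent groups with abelian Sylow $\ell$-subgroup'' although the theorem does not assume $P$ abelian, and you explicitly defer (``the main obstacle\dots'') the verification that the abstract isomorphism is realized by the stated bimodule --- which is precisely what the theorem asserts.
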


Let $E$ be a group of automorphisms of $G$ that stabilizes $U$ and $P$.
Then, the $\bigl(B_0\bigl(N_G(P)\bigr)\otimes B_0(G)^\opp\bigr)$-module
$e_U\BZ_\ell Gb_0(G)$ extends to a
$\bigl(\bigl(B_0(N_G(P))\otimes B_0(G)^\opp\bigr)\rtimes E\bigr)$-module.

\smallskip
The main example is the following (cf.\ \cite[Theorem 23.12]{CabEn}).
We take $\BG$, $F$, and so on as in \S \ref{se:DeligneLusztig}, and we assume that $\delta=1$.
Let $\BT\subset\BB$ be an $F$-stable maximal torus contained in an
$F$-stable Borel subgroup of $\BG$
and let $\BU$ be the unipotent radical of $\BB$.
Let $U=\BU^F$ and $T=\BT^F$.
The assumptions above are satisfied when
$\ell\mid(q-1)$ and $\ell\nmid |W^F|$. We have
$N_G(P)=N_{\BG}(\BT)^F$.

\begin{rem}
\label{re:changeq}
Consider the same setting for
$\BG'$ another reductive group, defined over $\BF_{q'}$. If
the finite groups $N_{\BG}(\BT)^F/O_{\ell'}\bigl(N_{\BG}(\BT)^F\bigr)$ and
$N_{\BG'}(\BT')^{F'}/O_{\ell'}\bigl(N_{\BG'}(\BT')^{F'}\bigr)$ are isomorphic,
then Theorem \ref{th:Puig} provides a splendid Morita equivalence
between $B_0(G)$ and $B_0(G')$.
\end{rem}

\smallskip
Let us be more specific for our needs.
The condition above is satisfied when 
\begin{itemize}
\item $G=\PSU_n(q)$,
$\ell\mid(q-1)$ and $2\neq \ell>n/2$.
%we have $|T|=(q-1)^{[n/2]}(q+1)^{[(n-1)/2]}$ and $W=B_{[n/2]}$.
\item
$G=\PSp_4(q)$ and $\ell\mid(q-1)$, $\ell\neq2$.
\end{itemize}

Brou\'e's conjecture predicts the existence of another derived equivalence
(not a Morita equivalence),
provided by the Deligne--Lusztig variety associated with the element
$\pi$ of the braid group. Note that such an equivalence would arise from
an action of $G\times (P\rtimes B^+)^\opp$ on a geometric object,
while in the Harish-Chandra equivalence above, the action of $N_G(P)$
on $\BZ_\ell(G/U)$ doesn't arise from a monoid action on $G/U$.

\section{Automorphisms}
\label{se:automorphismes}
\subsection{Stability of equivalences}
\label{se:stability}
Extensions of equivalences and reductions to finite simple groups have been
considered in various particular situations: isotypies \cite{FoHa},
(splendid) Rickard and derived equivalences \cite{Ma1}. We introduce here
a framework that handles various types of equivalences.

\subsubsection{Extensions of equivalences}
\label{se:stabilityequiv}
Let $R$ be a commutative $\BZ_\ell$-algebra. We consider data $\CC$ consisting,
for every finite group
$G$, of a full subcategory $\CC(G)$ of the category of bounded complexes
of $R$-projective finitely generated $RG$-modules. We assume that $\CC(G)$ is
closed under taking direct sums and direct summands and that the
following holds:

\begin{itemize}
\item[(S1)] given $H\le G$ of $\ell'$-index and given
$X\in\CC(H)$, then $\Ind_H^G(X)\in\CC(G)$.
\end{itemize}

A consequence of the assumptions is that given $X\in\Comp^b(RG)$ and given
$H\le G$ of $\ell'$-index, if
$\Res_H(X)\in\CC(H)$, then $X\in\CC(G)$, since $X$ is a direct summand
of $\Ind^G\Res_H X$.

\begin{defi}
Let $G$ and $H$ be two finite groups. We say that
$X\in\Comp^b\bigl(RB_0(G)\otimes RB_0(H)^\opp\bigr)$
\emph{induces a $\CC$-equivalence}
between the principal blocks of $G$ and $H$ if
\begin{itemize}
\item the canonical map $RB_0(G)\to\End^\bullet_{RH^\opp}(X)$ is a
split injection with cokernel in $\CC(G^\en)$, and
\item the canonical map
$RB_0(H)\to\End^\bullet_{RG}(X)$ is a split injection with cokernel in 
$\CC(H^\en)$.
\end{itemize}
\end{defi}

\smallskip
Given $G\lhd\tilde{G}$ and $H\lhd\tilde{H}$ with $\tilde{H}/H=\tilde{G}/G=E$
an $\ell'$-group,
we put $\tilde{\Delta}(G,H)=\{(x,y)\in\tilde{G}\times\tilde{H}^\opp \mid
(xG,yH)\in\Delta E\}$.

\begin{defi}
We say that $X\in\Comp^b\bigl(R\tilde{\Delta}(G,H)\bigr)$ \emph{induces an $E$-equivariant $\CC$-equivalence} between the principal blocks of $G$ and $H$ if
$\Res_{G\times H^\opp}(X)$ induces a $\CC$-equivalence
between the principal blocks of $G$ and $H$ and
$b_0(\tG)\Ind^{\tG\times\tH^\opp}X=b_0(\tH)\Ind^{\tG\times\tH^\opp}X$.
\end{defi}

\begin{lemma}
\label{le:extensionequiv}
Let $G\lhd\tilde{G}$ be finite groups with $\ell\nmid[\tilde{G}:G]$. Let
$H\lhd\tilde{H}$ with $\tilde{H}/H=\tilde{G}/G$.
Let $X\in\Comp^b\bigl(R\tilde{\Delta}(G,H)\bigr)$ be a complex
inducing an
equivariant $\CC$-equivalence between the principal blocks of $G$ and $H$.

Then $b_0(\tG\times \tH)\Ind^{\tG\times\tH^\opp}X$
induces a $\CC$-equivalence between the
principal blocks of $\tilde{G}$ and $\tilde{H}$.
\end{lemma}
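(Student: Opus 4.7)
The approach is to reduce the claims for $(\tG,\tH)$ to the hypothesis for $(G,H)$, exploiting that $|E|$ is invertible in $R$ together with property (S1). First I would use the equivariance assumption $b_0(\tG)\bar{X}=b_0(\tH)\bar{X}$, where $\bar{X}=\Ind^{\tG\times\tH^\opp}_{\tilde{\Delta}(G,H)}X$, to conclude $Y=b_0(\tG)\bar{X}=\bar{X}b_0(\tH)$, and hence that $\End^\bullet_{R\tH^\opp}(Y)=b_0(\tG)\End^\bullet_{R\tH^\opp}(\bar{X})b_0(\tG)$ as complexes of $R\tG^\en$-modules. It suffices to establish the first of the two split-injection conditions in the definition of $\CC$-equivalence; the second will follow by the analogous argument with $(\tG,G)$ and $(\tH,H)$ exchanged, since the hypotheses are symmetric.

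The key step will be a Mackey-Shapiro computation. Since $\tilde{\Delta}(G,H)$ contains $G\times H^\opp$ as a normal subgroup with quotient $\Delta E$, one obtains $\Res_{G\times H^\opp}\bar{X}\simeq\bigoplus_{e\in E}{}^eX$, and the same formula holds for $Y$ because each summand already lies in the principal block. Using that $\End^\bullet_{R\tH^\opp}(Y)=\bigl(\End^\bullet_{RH^\opp}(Y)\bigr)^E$ (a direct summand since $|E|$ is invertible in $R$) together with the Hom-decomposition $\End^\bullet_{RH^\opp}(Y)\simeq\bigoplus_{e,e'\in E}\Hom^\bullet_{RH^\opp}({}^eX,{}^{e'}X)$, I would identify, upon restriction to $RG^\en$, a natural direct summand of $\End^\bullet_{R\tH^\opp}(Y)$ isomorphic to $\End^\bullet_{RH^\opp}(X)$, together with a complementary summand built from the off-diagonal Homs. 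The canonical map $RB_0(\tG)\to\End^\bullet_{R\tH^\opp}(Y)$ will then land, after restriction to $RG^\en$, inside this natural summand via the canonical map $RB_0(G)\to\End^\bullet_{RH^\opp}(X)$.

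Next, invoking the hypothesis, the canonical map $RB_0(G)\to\End^\bullet_{RH^\opp}(X)$ is a split injection with cokernel $C\in\CC(G^\en)$. Since conjugation by $E$ acts by automorphisms of $G^\en$ and hence preserves $\CC(G^\en)$, the restriction to $RG^\en$ of the cokernel of $RB_0(\tG)\to\End^\bullet_{R\tH^\opp}(Y)$ is a direct sum of $E$-twists of $C$ together with the off-diagonal Hom-summands (which are themselves in $\CC(G^\en)$ by a similar analysis, as they arise from $X$ and its conjugates by the same hypothesis). By the remark following axiom (S1), the cokernel, viewed as an $R\tG^\en$-module, then lies in $\CC(\tG^\en)$. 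For the splitness, any $RG^\en$-linear splitting of the restricted sequence averages over $E\times E$, using that $|E|$ is invertible in $R$, to yield an $R\tG^\en$-linear splitting.

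The main obstacle will be the Mackey-type bookkeeping in the second step: producing the natural $E$-equivariant decomposition of $\End^\bullet_{R\tH^\opp}(Y)$ restricted to $RG^\en$ in terms of the complexes $\Hom^\bullet_{RH^\opp}({}^eX,{}^{e'}X)$, and verifying that the $\tG$-level canonical block map restricts in a compatible way to the $G$-level canonical block map under this identification. Once that Shapiro-type identification is in place, the passage from the $G$-level hypothesis to the $\tG$-level conclusion is a routine averaging argument combined with the closure property (S1).
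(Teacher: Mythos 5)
Your overall strategy---use the Mackey/Shapiro identification of $\End^\bullet_{R\tH^\opp}(Y)$ restricted down to $G$ in terms of $\Hom^\bullet_{RH^\opp}$ between $X$ and its $E$-conjugates, then invoke (S1) and average to split---is the same as the paper's (which phrases the identification as a chain of adjunctions giving $\Res_{\tG\times G^\opp}\End^\bullet_{R\tH^\opp}(X_2)\iso R\tG\otimes_{RG}\End^\bullet_{RH^\opp}(X_1)$). However, the decisive step, where you identify the image and cokernel of the canonical map, contains a genuine error.

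The canonical map $RB_0(\tG)\to\End^\bullet_{R\tH^\opp}(Y)$ does \emph{not} land inside the diagonal summand $\End^\bullet_{RH^\opp}(X)$ after restriction to $RG^\en$: an element $g\in\tG\smallsetminus G$ acts on $\Res_{G\times H^\opp}Y\simeq\bigoplus_{e\in E}{}^{e}X$ by permuting the summands, so it maps into the off-diagonal components $\Hom^\bullet_{RH^\opp}({}^{e}X,{}^{\bar g e}X)$. Consequently your description of the cokernel is also wrong: it is \emph{not} ``$E$-twists of $C$ plus the off-diagonal Hom-summands'' (this double counts), but rather, component by component, the sum over $d\in E$ of the cokernels of $RGg_d b_0(\tG)\to\Hom^\bullet_{RH^\opp}(X,{}^{g_d}X)$, each of which is a $(g_d,1)$-twist of (a block summand of) $C$ and nothing more. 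Moreover, the claim that the off-diagonal complexes $\Hom^\bullet_{RH^\opp}(X,{}^{e}X)$ lie in $\CC(G^\en)$ ``by the same hypothesis'' is unjustified and in general false: for $\CC$ the contractible complexes, $\Hom^\bullet_{RH^\opp}(X,{}^{e}X)$ contains the invertible bimodule $RGg_e b_0(\tG)$ (the image of the canonical map) as a direct summand, so it is not contractible. If your argument needed these whole summands to be in $\CC$, it would break.

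The repair is exactly what the paper does: keep the $\tG\times G^\opp$-structure and observe that, under the adjunction isomorphism, the canonical map is $RB_0(\tG)\otimes_{RB_0(G)}(-)$ applied to the canonical map $RB_0(G)\to\End^\bullet_{RH^\opp}(X_1)$. Hence the cokernel is $RB_0(\tG)\otimes_{RB_0(G)}C$, a direct summand of $\Ind_{G^\en}^{\tG\times G^\opp}C$, which lies in $\CC(\tG\times G^\opp)$ by (S1); the consequence of (S1) recorded after the axiom (restriction along an $\ell'$-index subgroup detects membership) then puts the cokernel in $\CC(\tG^\en)$, and your averaging over $\tG^\en/G^\en$ (of order prime to $\ell$, hence invertible in $R$) gives the $R\tG^\en$-linear splitting. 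With that correction your proof becomes the paper's.
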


\begin{proof}
Let $X_1=\Res_{G\times H^\opp}(X)$ and
$X_2=\Ind^{\tilde{G}\times \tilde{H}^\opp}(X)$. We have canonical
isomorphisms (Mackey formula)
$$\Res_{G\times\tilde{H}^\opp}X_2\iso \Ind^{G\times\tilde{H}^\opp}X_1
\text{ and }
\Res_{\tilde{G}\times H^\opp}X_2\iso \Ind^{\tilde{G}\times H^\opp}X_1.$$

We have canonical isomorphisms in $\Comp^b(R(\tilde{G}\times G^\opp))$:
$$\Res_{\tilde{G}\times G^\opp}\End^\bullet_{R\tilde{H}^\opp}(X_2)\iso
\Hom^\bullet_{R\tilde{H}^\opp}(\Res_{G\times\tilde{H}^\opp}X_2,X_2)\iso
\Hom^\bullet_{R\tilde{H}^\opp}(\Ind^{G\times\tilde{H}^\opp}X_1,X_2)\iso$$
$$\Hom^\bullet_{RH^\opp}(X_1,\Res_{\tilde{G}\times H^\opp}X_2)\iso
\Hom^\bullet_{RH^\opp}(X_1,\Ind^{\tilde{G}\times H^\opp}X_1)\iso
R\tilde{G}\otimes_{RG}\End^\bullet_{RH^\opp}(X_1).$$
We deduce a commutative diagram in $\Comp^b(RG^\en)$:
$$\xymatrix{
RB_0(\tG)\otimes_{RB_0(G)}RB_0(G) \ar[rr]^-{1\otimes\can} 
\ar[d]_\sim^{\text{mult}} &&
RB_0(\tG)\otimes_{RB_0(G)}\End^\bullet_{RH^\opp}(X_1)\ar[d]^\sim \\
RB_0(\tG)\ar[rr]_-{\can}&& b_0(\tG)\End^\bullet_{R\tH^\opp}(X_2)
}$$
It follows that the canonical map
$RB_0(\tG)\to \End^\bullet_{R\tH^\opp}\bigl(b_0(\tG)X_2\bigr)$ is a split
injection with cokernel in $\CC(\tG^\en)$.
%We have $b_0(\tG)B_0(\tDelta(G,H))=B_0(\tDelta(G,H))b_0(\tH)$
%(reference ???), hence $b_0(\tG)X_2=X_2b_0(\tH)$.

The other condition is checked by swapping the roles of $G$ and $H^\opp$.
\end{proof}

Using the notation of the proof of Lemma \ref{le:extensionequiv}, note
that we have a commutative diagram

$$\xymatrix{\Comp^b\bigl(RB_0(\tH)\bigr)\ar[rr]^{X_2\otimes_{R\tH}-}\ar[d]_{\Res} &&
\Comp^b\bigl(RB_0(\tG)\bigr)\ar[d]^{\Res} \\
\Comp^b\bigl(RB_0(H)\bigr)\ar[rr]_{X_1\otimes_{RH}-} && \Comp^b\bigl(RB_0(G)\bigr)
}$$

\begin{rem}
Consider $G\lhd\tilde{G}$ and $H\lhd\tilde{H}$ with $\tilde{H}/H=\tilde{G}/G=E$
an $\ell'$-group. Let $P$ be an $\ell$-Sylow subgroup of $G$ and $Q$ an
 $\ell$-Sylow subgroup of $H$.
If $C_{\tG}(P)\subset G$ and $C_{\tH}(Q)\subset H$, then given
$X\in\Comp^b\bigl(R\tilde{\Delta}(G,H)\bigr)$ whose restriction is in
$\Comp^b\bigl(RB_0(G\times H^\opp)\bigr)$, we have 
$X\in\Comp^b\bigl(RB_0\bigl(\tilde{\Delta}(G,H)\bigr)\bigr)$
\cite[Theorem 6.4.1]{Be}.
\end{rem}

We can even do a little better to extend equivalences.

\begin{lemma}
\label{le:extension}
Consider finite groups $G_1\lhd G_2\lhd \tG_2\le \tG_1$ and
$H_1\lhd H_2\lhd \tH_2\le \tH_1$ with $G_1\lhd\tG_1$, $H_1\lhd\tH_1$,
$\ell\nmid[\tG_1:G_1]$,
$\tG_1/G_1=\tH_1/H_1$, $\tG_2/G_1=\tH_2/H_1$ and $G_2/G_1=H_2/H_1$
(compatible with the inclusions $G_2/G_1\le \tG_2/G_1\le \tG_1/G_1$ and
$H_2/H_1\le \tH_2/H_1\le \tH_1/H_1$).
Let $E_i=\tG_i/G_i$.

Let $X\in\Comp^b\bigl(R\tilde{\Delta}(G_1,H_1)\bigr)$ be a complex
inducing an $E_1$-equivariant
$\CC$-equivalence between the principal blocks of $G_1$ and $H_1$.

Then $b_0(G_2\times H_2^\opp)
\Ind^{\tilde{\Delta}(G_2,H_2)}
\Res_{\tilde{\Delta}(G_1,H_1)\cap (\tilde{G}_2\times\tH_2^\opp)}(X)$
induces an $E_2$-equivariant $\CC$-equivalence between the
principal blocks of $G_2$ and $H_2$.
\end{lemma}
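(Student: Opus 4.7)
The plan is to reduce to Lemma \ref{le:extensionequiv} applied at the intermediate level $(G_1\lhd G_2, H_1\lhd H_2)$ with equivariance group $G_2/G_1=H_2/H_1$. Set
\[\Gamma=\tilde{\Delta}(G_1,H_1)\cap (\tG_2\times\tH_2^\opp)\quad\text{and}\quad
\Delta=\tilde{\Delta}(G_1,H_1)\cap (G_2\times H_2^\opp)=\Gamma\cap (G_2\times H_2^\opp),\]
and denote by $Y$ the complex appearing in the statement. A direct group-theoretic check, using the identifications $\tG_2/G_1=\tH_2/H_1$ and $G_2/G_1=H_2/H_1$ inside $E_1$, gives $\tilde{\Delta}(G_2,H_2)=(G_2\times H_2^\opp)\cdot\Gamma$ with $(G_2\times H_2^\opp)\cap\Gamma=\Delta$, and a short exact sequence $1\to\Delta\to\Gamma\to E_2\to 1$. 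Moreover $\ell\nmid|G_2/G_1|$, since $G_2/G_1\le E_1$ and $\ell\nmid[\tG_1:G_1]$.

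To verify the first condition in the definition of $E_2$-equivariant $\CC$-equivalence, apply the Mackey formula together with centrality of $b_0(G_2\times H_2^\opp)$ in $R\tilde{\Delta}(G_2,H_2)$:
\[\Res_{G_2\times H_2^\opp}(Y)\simeq b_0(G_2\times H_2^\opp)\Ind_\Delta^{G_2\times H_2^\opp}\Res_\Delta(X).\]
The complex $\Res_\Delta(X)$ inherits from $X$ the structure of a $(G_2/G_1)$-equivariant $\CC$-equivalence between $B_0(G_1)$ and $B_0(H_1)$, by restricting the $E_1$-equivariance to the subgroup $G_2/G_1\le E_1$. Lemma \ref{le:extensionequiv} applied at the pair $(G_1\lhd G_2, H_1\lhd H_2)$ then gives that $\Res_{G_2\times H_2^\opp}(Y)$ induces a $\CC$-equivalence between $B_0(G_2)$ and $B_0(H_2)$.

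For the $E_2$-equivariance of $Y$, transitivity of induction and centrality of $b_0(G_2\times H_2^\opp)$ yield
\[\Ind_{\tilde{\Delta}(G_2,H_2)}^{\tG_2\times\tH_2^\opp}(Y)\simeq b_0(G_2\times H_2^\opp)\Ind_\Gamma^{\tG_2\times\tH_2^\opp}\Res_\Gamma(X).\]
Using the covering relations $b_0(\tG_2)b_0(G_2)=b_0(\tG_2)$ and $b_0(\tH_2)b_0(H_2)=b_0(\tH_2)$ (which hold because $\ell\nmid[\tG_2:G_2]$ and $\ell\nmid[\tH_2:H_2]$, and the principal block is distinguished under conjugation), the desired identity $b_0(\tG_2)\Ind(Y)=b_0(\tH_2)\Ind(Y)$ reduces to an equality of two principal block projections applied to $\Ind_\Gamma^{\tG_2\times\tH_2^\opp}\Res_\Gamma(X)$. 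This is extracted from the $E_1$-equivariance of $X$ by restricting the identity $b_0(\tG_1)\Ind(X)=b_0(\tH_1)\Ind(X)$ down to $\tG_2\times\tH_2^\opp$ via Mackey, noting that the trivial-coset summand is precisely $\Ind_\Gamma^{\tG_2\times\tH_2^\opp}\Res_\Gamma(X)$. The main obstacle is this final step: one must carefully track the interaction of principal block idempotents at two different levels ($\tG_1$ vs.\ $\tG_2$, and $\tH_1$ vs.\ $\tH_2$) with the Mackey decomposition, and verify that the non-identity coset contributions are annihilated by both relevant block projections, so that the $E_1$-equivariance of $X$ transfers cleanly to the required $E_2$-equivariance of $Y$.
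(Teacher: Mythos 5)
Your proof is correct and follows essentially the same route as the paper's: the paper factors the argument into the two special cases $G_1=G_2$ and $\tG_1=\tG_2$ and composes them, while you perform the resulting single Mackey computation (via $\tilde{\Delta}(G_2,H_2)=(G_2\times H_2^\opp)\Gamma$) directly before invoking Lemma \ref{le:extensionequiv}. The step you flag as the "main obstacle" is not a real gap: since the principal block idempotents are central in $R(\tG_2\times\tH_2^\opp)$ they act summand-by-summand on the Mackey decomposition, so the coset-wise equality you need does follow from the $E_1$-equivariance of $X$ exactly as you sketch (the paper leaves this point implicit as well).
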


\begin{proof}
If $G_1=G_2$, then in this case the result is clear. If $\tG_1=\tG_2$, then
\[\Res_{G_2\times H_2^\opp}\Ind^{\tilde{\Delta}(G_2,H_2)}(X)\simeq \Ind^{G_2\times H_2^\opp}\Res_{\tDelta(G_1,H_1)\cap (G_2\times H_2^\opp)} X,\]
and the result follows from Lemma \ref{le:extensionequiv}. The general case follows from the two cases studied above.
\end{proof}

\begin{lemma}
\label{le:AlperinDade}
Let $G\lhd\tG\le\hat{G}$ with $G\lhd\hat{G}$ and $\ell\nmid[\hat{G}:G]$.
Let $P$ be a Sylow $\ell$-subgroup of $G$. Let $\tH=GC_{\tilde{G}}(P)$
and $\hat{H}=GC_{\hat{G}}(P)$.
Assume that $\hat{G}=\tilde{G}C_{\hat{G}}(P)$.

The $B_0(\tH\times\hat{H}^\opp)$-module
$B_0(\tH)\otimes_{\BZ_\ell G} B_0(\hat{H})$ extends
to a $B_0\bigl((\tH\times \hat{H}^\opp)\Delta(\tG)\bigr)$-module $M$,
where $h\in \tG$ sends $x\otimes y$ to $hxh^{-1}\otimes hyh^{-1}$. The module $M$ induces a splendid $(\tG/\tH)$-equivariant Morita equivalence between $B_0(\tH)$ and $B_0(\hat{H})$, and the module
$\Ind^{\tG\times\hat{G}^\opp}(M)$ provides
an isomorphism of algebras
$$B_0(\tilde{G})\iso B_0(\hat{G}),\ x\mapsto b_0(\hat{G})x.$$
\end{lemma}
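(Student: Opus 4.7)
The plan is to unpack the group-theoretic hypotheses, define $M$ with its $\Delta\tG$-action, prove the Morita equivalence and splendidness at the level of $(\tH,\hat{H})$, and then apply Lemma \ref{le:extensionequiv} to obtain the algebra isomorphism at the level of $(\tG,\hat{G})$.

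I would first establish the group theory. A Frattini argument using Sylow's theorem in $G$ gives $\tG=GN_{\tG}(P)$, so $\tH=GC_{\tG}(P)\lhd\tG$; similarly $\hat{H}\lhd\hat{G}$. Writing any element of $C_{\hat{G}}(P)\cap(GC_{\tG}(P))$ as $gy$ with $g\in G$ and $y\in C_{\tG}(P)$ forces $g\in C_G(P)\subseteq C_{\tG}(P)$, hence $C_{\hat{G}}(P)\cap\tH=C_{\tG}(P)$. Combined with the hypothesis $\hat{G}=\tG C_{\hat{G}}(P)$, this yields $\hat{H}/\tH\cong C_{\hat{G}}(P)/C_{\tG}(P)\cong\hat{G}/\tG=:E$, an $\ell'$-group, and both $\tH$ and $\hat{H}$ retain $P$ as Sylow $\ell$-subgroup. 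A similar calculation shows $N_{\hat{H}}(P)=N_{\tH}(P)\cdot C_{\hat{H}}(P)$, so $\tH$ and $\hat{H}$ induce the same fusion system on $P$. Since the trivial character of $G$ is both $\tH$- and $\hat{H}$-stable, $b_0(G)$ is central in $\BZ_\ell\tH$ and $\BZ_\ell\hat{H}$ with $b_0(\tH)b_0(G)=b_0(\tH)$ and $b_0(\hat{H})b_0(G)=b_0(\hat{H})$, making $B_0(\tH)\otimes_{\BZ_\ell G}B_0(\hat{H})$ well-defined; conjugation by $\tG$ preserves $G$ and the balancing relation, providing the $\Delta\tG$-action on $M$.

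The heart of the argument is showing that $M$ induces a Morita equivalence. I would take $N=B_0(\hat{H})\otimes_{\BZ_\ell G}B_0(\tH)$ as the candidate inverse bimodule and verify that the multiplication maps $M\otimes_{B_0(\hat{H})}N\to B_0(\tH)$ and $N\otimes_{B_0(\tH)}M\to B_0(\hat{H})$ are isomorphisms. At the level of source algebras, the matching fusion data of $\tH$ and $\hat{H}$ on $P$ (shown above) implies that $B_0(\tH)$ and $B_0(\hat{H})$ have canonically isomorphic source algebras, and the bimodule $M$ is the obvious realization of this isomorphism. Splendidness of $M$ follows by recognizing it as a direct summand of a permutation $(\tH\times\hat{H}^\opp)$-module with vertex in the diagonal copy of $P$, and the $(\tG/\tH)$-equivariance is built into the construction by way of the $\Delta\tG$-action.

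Finally, Lemma \ref{le:extensionequiv} applied to the equivariant Morita equivalence $M$ yields that $b_0(\tG\times\hat{G})\Ind^{\tG\times\hat{G}^\opp}(M)$ is a Morita equivalence between $B_0(\tG)$ and $B_0(\hat{G})$. A Mackey calculation using the decomposition $\hat{G}=\tG C_{\hat{G}}(P)$ identifies this induced bimodule with $b_0(\tG)\BZ_\ell\hat{G}b_0(\hat{G})$; the centrality of $b_0(\hat{G})$ then realizes it as the algebra map $x\mapsto b_0(\hat{G})x$, which the Morita-equivalence property forces to be an isomorphism. The main obstacle is establishing the Morita equivalence at the local $(\tH,\hat{H})$-level, which ultimately rests on identifying the source algebras of $B_0(\tH)$ and $B_0(\hat{H})$ using the centralizer condition; the remaining steps reduce to formal manipulations with induced and tensor bimodules within the equivariant-induction framework already developed.
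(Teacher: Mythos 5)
There is a genuine gap at the heart of your argument. The lemma is named after the Alperin--Dade theorem for a reason: the whole content of the statement is that the maps $B_0(G)\to B_0(\tH)$, $a\mapsto b_0(\tH)a$, and $B_0(G)\to B_0(\hat{H})$, $a\mapsto b_0(\hat{H})a$, are algebra isomorphisms (this is \cite[Theorem 2]{Alp} and \cite{Da}, applicable because $\tH=GC_{\tH}(P)$ and $\hat{H}=GC_{\hat{H}}(P)$ with $\ell\nmid[\tH:G]$, $\ell\nmid[\hat{H}:G]$). Once this is known, $B_0(\tH)\otimes_{\BZ_\ell G}B_0(\hat{H})\simeq B_0(G)$ as a bimodule over two algebras each identified with $B_0(G)$, so $M$ is invertible, visibly splendid, and carries the $\Delta(\tG)$-action; everything else is formal. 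Your proposal never invokes this theorem and offers no substitute proof of it. Instead, at the crucial point you assert that because $\tH$ and $\hat{H}$ induce the same fusion on $P$, the blocks $B_0(\tH)$ and $B_0(\hat{H})$ ``have canonically isomorphic source algebras'' with $M$ realizing the isomorphism. That inference is false as a general principle: a fusion system does not determine a source algebra (if it did, large parts of this paper --- and Puig's finiteness conjecture --- would be trivial), and even when two blocks happen to have isomorphic source algebras there is no reason a particular bimodule realizes the equivalence. Likewise, your claim that the two multiplication maps $M\otimes_{B_0(\hat{H})}N\to B_0(\tH)$ and $N\otimes_{B_0(\tH)}M\to B_0(\hat{H})$ are isomorphisms is exactly what needs proving, and without Alperin--Dade you have no handle on it: a priori $B_0(\tH)\otimes_{\BZ_\ell G}B_0(\hat{H})$ need not even be free over either side.

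The surrounding material in your write-up is essentially fine: the Frattini-type computations ($\tH\lhd\tG$, $C_{\hat{G}}(P)\cap\tH=C_{\tG}(P)$, $\hat{H}/\tH\cong\hat{G}/\tG$), the well-definedness of the balanced tensor product, the $\Delta(\tG)$-equivariance, and the final passage from $(\tH,\hat{H})$ to $(\tG,\hat{G})$ via Lemma \ref{le:extensionequiv} all match the paper's route (the paper additionally uses $\Ind_{\tH}^{\tG}\bigl(B_0(\tH)\bigr)=B_0(\tG)$ from \cite[Theorem 6.4.1]{Be} to see that the induced bimodule lands in the principal blocks). To repair the proof, replace the source-algebra paragraph with an appeal to the Alperin--Dade theorem, or prove that theorem; it is not a formal consequence of the fusion data.
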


\begin{proof}
The Alperin--Dade theorem (\cite[Theorem 2]{Alp}, \cite{Da})
shows that there are isomorphisms
$$B_0(G)\iso B_0(\tH),\ a\mapsto b_0(\tH)a \text{ and }B_0(G)\iso B_0(\hat{H}),\ a\mapsto b_0(\hat{H})a.$$
We obtain an isomorphism
$$B_0(G)\iso B_0(\tH)\otimes_{\BZ_\ell G} B_0(\hat{H}),\ a\mapsto b_0(\tH)a
\otimes b_0(\hat{H})$$
compatible with the $\Delta(\tG)$-action described in the lemma,
and this provides $M\simeq B_0(G)$ with a structure of a
$B_0\bigl((\tH\times \hat{H}^\opp)\Delta(\tG)\bigr)$-module.
Note that $\Res_{\tH\times\hat{H}^\opp}(M)$ induces a Morita
equivalence that sends $B_0(\tilde{H})$ to $B_0(\hat{H})$.
We have $\Ind_{\tH}^{\tG}\bigl(B_0(\tH)\bigr)=B_0(\tG)$ and
$\Ind_{\hat{H}}^{\hat{G}}\bigl(B_0(\hat{H})\bigr)=B_0(\hat{G})$ 
\cite[Theorem 6.4.1 (v)]{Be}, hence
the Morita equivalence induced by $\Ind^{\tG\times\hat{G}^\opp}(M)$
sends $B_0(\tilde{G})$ to $B_0(\hat{G})$ (cf.\ the proof of
Lemma \ref{le:extensionequiv}) and it gives rise to the isomorphism
of algebras described in the lemma.
\end{proof}

We assume now that the data $\CC$ satisfy the following additional
assumption:
\begin{itemize}
\item[(S2)] given $G$, $G'$ two finite groups, given
$X\in\CC(G)$ and $Y\in\Comp^b(RG')$
with $Y^i$ projective over $R$ for all $i$, 
then $X\otimes_R Y\in\CC(G\times G')$.
\end{itemize}

\begin{lemma}
\label{le:extensionproduct}
Let $G_i\lhd \tG_i$ and $H_i\lhd \tH_i$ for $i=1,2$. Assume that
$\tG_i/G_i=\tH_i/H_i$ and $\ell\nmid[\tG_i:G_i]$. Let $X_i$ be a complex
inducing a $\tG_i/G_i$-equivariant $\CC$-equivalence between the principal
blocks of $G_i$ and $H_i$ for $i=1,2$. Then, $X_1\otimes_R X_2$ induces
a $(\tG_1\times\tG_2)/(G_1\times G_2)$-equivariant $\CC$-equivalence
between the principal blocks of $G_1\times G_2$ and $H_1\times H_2$.
\end{lemma}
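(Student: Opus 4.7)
The plan is to reduce to a component-wise analysis via the identification of groups and of endomorphism complexes, and then quote (S2) for the $\CC$-closure.

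First, I would identify $\tilde{\Delta}(G_1\times G_2,\,H_1\times H_2)$ with $\tilde{\Delta}(G_1,H_1)\times\tilde{\Delta}(G_2,H_2)$. Since the isomorphism $(\tG_1\times\tG_2)/(G_1\times G_2)\iso (\tH_1\times\tH_2)/(H_1\times H_2)$ is the product of the two given component isomorphisms, an element $((x_1,x_2),(y_1,y_2))$ satisfies the diagonal condition if and only if each $(x_i,y_i)$ does. Consequently $X_1\otimes_R X_2$ naturally lives in $\Comp^b\bigl(R\tilde{\Delta}(G_1\times G_2,H_1\times H_2)\bigr)$. I would also use the standard identifications $B_0(G_1\times G_2)=B_0(G_1)\otimes B_0(G_2)$ (and similarly for $H_1\times H_2$, $\tG_1\times\tG_2$, $\tH_1\times\tH_2$), which hold because the principal block idempotent of a product is the tensor product of the principal block idempotents.

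Next I would verify that $\Res_{(G_1\times G_2)\times (H_1\times H_2)^\opp}(X_1\otimes X_2)$ induces a $\CC$-equivalence. The Künneth-type identification
\[
\End^\bullet_{R(H_1\times H_2)^\opp}(X_1\otimes_R X_2)\iso
\End^\bullet_{RH_1^\opp}(X_1)\otimes_R\End^\bullet_{RH_2^\opp}(X_2)
\]
is compatible with the canonical map from $RB_0(G_1)\otimes RB_0(G_2)$. By hypothesis there are splittings $\End^\bullet_{RH_i^\opp}(X_i)\simeq RB_0(G_i)\oplus C_i$ with $C_i\in\CC(G_i^\en)$. Tensoring, the canonical map $RB_0(G_1\times G_2)\to\End^\bullet_{R(H_1\times H_2)^\opp}(X_1\otimes X_2)$ is split injective with cokernel
\[
\bigl(RB_0(G_1)\otimes C_2\bigr)\oplus\bigl(C_1\otimes RB_0(G_2)\bigr)\oplus\bigl(C_1\otimes C_2\bigr).
\]
Now $RB_0(G_i)$ is a complex of $R$-projective modules concentrated in degree $0$, and the terms of $C_i$ are $R$-projective by the standing hypothesis on $\CC$. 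Axiom (S2) applied to each summand, combined with closure of $\CC$ under direct sums, shows the cokernel lies in $\CC(G_1^\en\times G_2^\en)=\CC((G_1\times G_2)^\en)$. The symmetric argument, with the roles of $G$ and $H^\opp$ swapped, gives the second $\CC$-equivalence condition.

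Finally, for the equivariance condition, I would use that the induction $\Ind^{(\tG_1\times\tG_2)\times(\tH_1\times\tH_2)^\opp}$ from $\tilde{\Delta}(G_1\times G_2,H_1\times H_2)=\tilde{\Delta}(G_1,H_1)\times\tilde{\Delta}(G_2,H_2)$ factors as the external tensor product of the two inductions $\Ind^{\tG_i\times\tH_i^\opp}$. Applying the hypothesis $b_0(\tG_i)\Ind^{\tG_i\times\tH_i^\opp}(X_i)=b_0(\tH_i)\Ind^{\tG_i\times\tH_i^\opp}(X_i)$ on each factor and tensoring yields
\[
b_0(\tG_1\times\tG_2)\Ind(X_1\otimes X_2)=b_0(\tH_1\times\tH_2)\Ind(X_1\otimes X_2),
\]
which is the required $(\tG_1\times\tG_2)/(G_1\times G_2)$-equivariance.

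There is no serious obstacle here: the whole argument is a formal consequence of the compatibility of tensor products with principal blocks, endomorphism complexes, and induction, together with axiom (S2). The only point requiring care is the bookkeeping to ensure that the Künneth isomorphism above is compatible with the canonical maps from $RB_0(G_1\times G_2)$ on both sides, so that the splitting of $\CC$-equivalences truly transports to the tensor product.
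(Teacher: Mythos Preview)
Your proposal is correct and follows essentially the same route as the paper: the K\"unneth identification of $\End^\bullet_{R(H_1\times H_2)^\opp}(X_1\otimes_R X_2)$, the resulting three-term decomposition of the cokernel, and an appeal to (S2). The paper dismisses the equivariance part as ``clear'' and works only with the non-equivariant $\CC$-equivalence, whereas you spell out the identification $\tilde{\Delta}(G_1\times G_2,H_1\times H_2)\simeq\tilde{\Delta}(G_1,H_1)\times\tilde{\Delta}(G_2,H_2)$ and the compatibility of induction with the tensor product; this extra care is justified but not a genuinely different argument.
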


\begin{proof}
The equivariance part is clear, so we can assume $\tG_i=G_i$ and
$\tH_i=H_i$. We have a canonical isomorphism
$\End^\bullet_{RH_1^\opp}(X_1)\otimes_R\End^\bullet_{RH_2^\opp}(X_2)
\iso \End^\bullet_{R(H_1\times H_2)^\opp}(X_1\otimes_R X_2)$.
The canonical map
$R(G_1\times G_2)\to  \End^\bullet_{R(H_1\times H_2)^\opp}(X_1\otimes_R X_2)$
is a split injection, with cokernel $L$ isomorphic to
$R_1\otimes_R RG_2\oplus RG_1\otimes_R R_2\oplus R_1\otimes_R R_2$,
where $R_i$ is the cokernel of the canonical map
$RG_i\to \End^\bullet_{RH_i^\opp}(X_i)$. It follows from
(S2) that $L\in\CC(G_1\times G_2)$.
The other property is obtained by swapping the roles of $G_i$ and $H_i^\opp$.
\end{proof}

If $A$ is an $R$-algebra, $n\ge 0$ is an integer, and $X$ is a complex of
$A$-modules, then there is a canonical extension of $X^{\otimes n}$ from
a complex of $A^{\otimes n}$-modules to a complex
of $A\wr\GS_n$-modules: it is obtained as the total complex associated
with an $n$-fold complex \cite[\S 1.1]{De} (see also \cite[Lemma 4.1]{Ma1} for
an explicit description). The following lemma is a consequence
of Lemma \ref{le:extensionproduct}.

\begin{lemma}
\label{le:wreath}
Let $G\lhd\tG$, $H\lhd \tH$ with $\tG/G=\tH/H$ and $\ell\nmid[\tG:G]$. 
Let $X$ be a complex inducing a $\CC$-equivariant equivalence between the
principal blocks of $G$ and $H$.

Let $n\ge 1$ and let $L$ be an $\ell'$-subgroup of $\GS_n$. Then,
$X^{\otimes n}$ induces a $\CC$-equivariant equivalence between the 
principal blocks of $G\wr L$ and $H\wr L$.
\end{lemma}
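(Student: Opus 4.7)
The plan is to stack three structures on $X^{\otimes n}$---its bimodule action, an outer $E^n$-equivariance produced by iterating Lemma \ref{le:extensionproduct}, and the canonical wreath-product $\GS_n$-action recalled just before the statement---and then to close the argument by a single application of Lemma \ref{le:extensionequiv}.

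First I would iterate Lemma \ref{le:extensionproduct} $n$ times with $(G_i,H_i,X_i)=(G,H,X)$ throughout. This yields that $X^{\otimes n}$ induces an $E^n$-equivariant $\CC$-equivalence between the principal blocks of $G^n$ and $H^n$, where $E=\tG/G=\tH/H$. Concretely, at this stage $X^{\otimes n}$ is a complex of $R\tDelta(G^n,H^n)$-modules sitting inside $\tG^n\times(\tH^n)^\opp$.

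Second I would invoke the canonical construction recalled just before the lemma, with $A=R(G\times H^\opp)$. This upgrades $X^{\otimes n}$ from a complex of $A^{\otimes n}$-modules to a complex of $A\wr\GS_n$-modules, compatibly with the existing bimodule structure. Restricting the $\GS_n$-action to $L$ and matching the resulting action on $A^{\otimes n}=R\bigl(G^n\times(H^n)^\opp\bigr)$ with the diagonal permutation of $L$ on $G^n\times(H^n)^\opp$ sitting inside $(G\wr L)\times(H\wr L)^\opp$ promotes $X^{\otimes n}$ to a complex of $R\tDelta(G\wr L,H\wr L)$-modules, with $\tG\wr L/G\wr L\simeq E^n$ as the residual $\ell'$-quotient.

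Third I would apply Lemma \ref{le:extensionequiv} to the normal inclusions $G^n\lhd G\wr L$ and $H^n\lhd H\wr L$, with $L$ in the role of the $\ell'$-group (noting $\ell\nmid|L|$ by hypothesis). Step one furnishes a $\CC$-equivalence between the base principal blocks, step two furnishes the required $L$-equivariance, and Lemma \ref{le:extensionequiv} then delivers the desired equivalence between $B_0(G\wr L)$ and $B_0(H\wr L)$. The outer $E^n$-equivariance from step one is preserved throughout because the canonical wreath extension commutes with the permutation action of $E$ on each tensor factor, yielding the equivariance claimed in the conclusion.

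The only real obstacle is bookkeeping: one must verify that the three actions on $X^{\otimes n}$---the bimodule action over $G^n\times(H^n)^\opp$, the outer $E^n$-equivariance produced by iterating Lemma \ref{le:extensionproduct}, and the inner $L$-action supplied by the canonical extension---assemble consistently into an action of $\tDelta(G\wr L,H\wr L)$. This amounts to aligning the algebraic conventions of the cited preamble (\cite{De}, \cite[Lemma 4.1]{Ma1}) with the group-theoretic conventions of $\tDelta(-,-)$, after which all remaining verifications reduce mechanically to the previously established lemmas.
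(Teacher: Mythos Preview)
Your overall plan is correct and matches the spirit of the paper's proof, which consists of the single sentence ``this is a consequence of Lemma \ref{le:extensionproduct}''---the canonical $\GS_n$-action on $X^{\otimes n}$ having been set up in the paragraph just before the lemma. The sole application of this lemma (Proposition \ref{pr:stabBroue1}, verifying condition (iii) of $(*)$) only needs the $(\tG\wr L)/G^n$-equivariant $\CC$-equivalence between $B_0(G^n)$ and $B_0(H^n)$, so your steps 1 and 2 already deliver what is actually used.

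Two bookkeeping corrections. In step 2, the actions you assemble on $X^{\otimes n}$ are those of $G^n\times(H^n)^\opp$, of $\Delta E^n$, and of $\Delta L$; together these generate $\tDelta(G^n,H^n)$ taken inside $(\tG\wr L)\times(\tH\wr L)^\opp$, \emph{not} $\tDelta(G\wr L,H\wr L)$. The latter group contains the full $(G\wr L)\times(H\wr L)^\opp$, and $X^{\otimes n}$ carries no action of the off-diagonal copies of $L$. In step 3, if one insists on passing to $G\wr L$ and $H\wr L$ while keeping the $E^n$-equivariance, the correct tool is Lemma \ref{le:extension} (with $G_1=G^n$, $G_2=G\wr L$, $\tG_2=\tG_1=\tG\wr L$), not Lemma \ref{le:extensionequiv}, which does not track residual equivariance. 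Note also that the complex one obtains after induction is $b_0\Ind X^{\otimes n}$ rather than $X^{\otimes n}$ itself; the paper's phrasing is slightly informal on this point.
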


\subsubsection{Stability of properties of finite groups with abelian Sylow
$p$-subgroups}

Let $\CE_1$ be the set of finite groups with abelian Sylow
$\ell$-subgroups and let
$\CE$ be the set of pairs $(G,\tilde{G})$ with $G\in\CE_1$,
$G\lhd \tilde{G}$ and $\ell\nmid[\tilde{G}:G]$.

\medskip
Recall that if $P$ is a Sylow $\ell$-subgroup of $G$, then $\tilde{G}=GN_{\tilde{G}}(P)$ (Frattini argument), and
hence $N_{\tilde{G}}(P)/N_G(P)=\tilde{G}/G$. There is an $\ell'$-subgroup
$E$ of $N_{\tG}(P)$ such that $N_{\tG}(P)=P\rtimes E$. We have
$\tG=GE$ and $\tG$ is a quotient of $G\rtimes E$ by an $\ell'$-subgroup. Let $N_\Delta(G,\tilde{G})=\tDelta(H,G)$, where $H=N_G(P)$ and
$\tH=N_{\tG}(P)$. We have
$N_\Delta(G,\tilde{G})=(H\times G^\opp)\Delta \tH$.

\begin{defi}
We say that a subset $\CP$ of $\CE$ satisfies (*) if
\begin{itemize}
\item[(i)] $\CP$ is closed under direct products;
\item[(ii)] given $(H,\tilde{H})\in\CP$ and $(G,\tilde{G})\in\CE$ such that
$H\lhd G\lhd\tilde{G}\le \tilde{H}$, we have $(G,\tilde{G})\in\CP$;
\item[(iii)] if $(G,\tilde{G})\in\CP$, $n\ge 0$ and $L$ is an $\ell'$-subgroup
of $\GS_n$, then $(G^n,\tilde{G}\wr L)\in\CP$;
\item[(iv)] if $G$ is an abelian $\ell$-group, then $(G,\tilde{G})\in\CP$;
\item[(v)] $(G/O_{\ell'}(G),\tilde{G}/O_{\ell'}(\tG))\in\CP$ if and only
if $(G,\tG)\in\CP$.
\end{itemize}
We say that a subset $\CP$ of $\CE$ satisfies (*') if, in addition, we have
\begin{itemize}
\item[(vi)] given $(G,\tilde{G})\in\CP$ and $(G,\hat{G})\in\CE$ with
$\tilde{G}\le\hat{G}$, and given a Sylow $\ell$-subgroup $P$ of $G$, if
$\hat{G}=\tilde{G}C_{\hat{G}}(P)$ then $(G,\hat{G})\in\CP$.
\end{itemize}
\end{defi}

\begin{prop}
\label{pr:starreduction}
Let $\CP$ be a subset of $\CE$ satisfying (*)(resp.\ (*')). Let $\CF$ be a set of non-cyclic
finite simple groups with non-trivial abelian Sylow $\ell$-subgroups. Given $G\in\CF$, let $\hat{G}\le\Aut(G)$ be such that
the image of $\hat{G}$ in $\Out(G)$ is a Hall $\ell'$-subgroup of $\Out(G)$.
Assume that there is a pair $(G,\tG)\in\CP$ such that $\tG/GC_{\tG}(G)=\hat{G}/G$
(resp.\ such that $\tG/GC_{\tG}(G)\le \hat{G}/G$ and given a Sylow $\ell$-subgroup
$P$ of $G$, we have $N_{\tilde{G}}(P)/C_{\tilde{G}}(P)=N_{\hat{G}}(P)/C_{\hat{G}}(P)$).

If $(G,\tilde{G})\in\CE$ is such that all non-cyclic composition factors of $G$ of order divisible by $\ell$
are in $\CF$, then $(G,\tilde{G})\in\CP$.
\end{prop}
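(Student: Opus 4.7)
The plan is to induct on $|G|$ and, at each step, exploit the closure properties of $\CP$ to build an envelope $(H,\tH)\in\CP$ with $H\lhd G\lhd\tG\le\tH$; property (ii) then yields $(G,\tG)\in\CP$. By (v) I may assume $O_{\ell'}(G)=1$, so the common Sylow $\ell$-subgroup $P$ of $G$ and $\tG$ is abelian and $F^*(G)=O_\ell(G)E(G)$ contains $C_G(F^*(G))$. If $G$ is an abelian $\ell$-group, (iv) applies directly; if $G$ is an $\ell'$-group, $(1,\tG)\in\CP$ by (iv) and (ii) with $H=1$ finishes.

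For the inductive step, I analyse the components $L_1,\dots,L_s$ of $E(G)$. Each simple quotient $L_i/Z(L_i)$ is either an $\ell'$-simple group---in which case $(L_i/Z(L_i),L_i/Z(L_i))\in\CP$ follows from (ii) applied to $(1,L_i/Z(L_i))\in\CP$---or a non-abelian simple group of order divisible by $\ell$, hence a member of $\CF$ by hypothesis. The group $\tG$ permutes the components, so I partition them into $\tG$-orbits $\CO_1,\dots,\CO_k$ of sizes $n_j$, with induced map $\tG\to\GS_{n_j}$ of $\ell'$-image contained in some $\ell'$-subgroup $W_j\le\GS_{n_j}$. For each orbit with simple quotient $S\in\CF$, I combine the hypothesis $(S,\tilde S^0)\in\CP$ with (i) and (iv) to enlarge the centralizer factor so that the precise stabilizer action of $\tG$ on one component is realized, then (iii) produces a pair covering the whole orbit as a wreath product with $W_j$. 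Gluing across orbits via (i) and adjoining the abelian $\ell$-part $O_\ell(G)$ by (iv) and (i) yields a pair $(F^*(G),\tH)\in\CP$; finally, (ii) with $H=F^*(G)$ closes the induction, since the conjugation action $\tG\to\Aut(F^*(G))$ realizes $\tG$ inside $\tH$ up to an $\ell'$-kernel that is absorbed as another centralizer factor by a further application of (i).

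The main obstacle is that $E(G)$ is a central (not direct) product: the centres $Z(L_i)$ are nontrivial $\ell$-subgroups sitting inside $O_\ell(G)$, forcing one to work modulo $Z(F^*(G))$ when applying (i) and then lift back---this is where induction is applied to the quasi-simple subquotients $L_i$ separately when $|L_i/Z(L_i)|<|G|$, so that the components themselves can be treated as already in $\CP$ before invoking (iii). A secondary technicality is that the base hypothesis fixes one particular Hall $\ell'$-subgroup $\hat S/S$ in $\Out(S)$, whereas the image of $\tG/GC_{\tG}(G)$ only lies in some conjugate; by solvability of $\Out(S)$ (via the Schreier conjecture), Hall $\ell'$-subgroups of $\Out(S)$ are conjugate, and an inner twist absorbs the discrepancy, with any residual $\ell'$-centralizer handled by (i) and (iv). In the ($*'$) variant, the weaker matching condition $\tG/GC_{\tG}(G)\le\hat G/G$ with equality only at the Sylow normalizer is bridged by invoking (vi): after building the pair via the structure controlled at $N_{\tG}(P)$, property (vi) enlarges it to the full $\tG$ by the centralizer of $P$.
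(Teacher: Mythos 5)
Your overall architecture matches the paper's: kill $O_{\ell'}(G)$ by (v), decompose via the generalized Fitting subgroup into an $\ell$-group times copies of groups in $\CF$, and then apply (iv), (v), (iii), (i), (ii) in that order, with (vi) handling the (*') variant. The difference is that the paper does not construct the key embedding by hand: it quotes \cite[\S 5]{FoHa}, which (using the classification) produces a sandwich $H_0\times H_1^{d_1}\times\cdots\times H_n^{d_n}\lhd G\lhd\tG\le\tH_0\times\hat H_1\wr L_1\times\cdots\times\hat H_n\wr L_n$ with $H_0$ an $\ell$-group, $H_i\in\CF$ and $L_i$ an $\ell'$-subgroup of $\GS_{d_i}$; everything then follows formally from (*). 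Your proposal attempts to reprove this structural statement, and that is where the gaps are.

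Concretely: (a) Your ``main obstacle,'' the central product $E(G)$ with $\ell$-centres $Z(L_i)\le O_\ell(G)$, cannot be handled the way you propose. None of the axioms (i)--(vi) lets you pass through a central $\ell$-extension, so ``working modulo $Z(F^*(G))$ and lifting back by induction on the quasi-simple subquotients'' does not type-check: the inductive hypothesis gives you pairs for the simple quotients $L_i/Z(L_i)$, and no axiom returns a pair for $L_i$. In fact the obstacle is vacuous: a quasi-simple group with abelian Sylow $\ell$-subgroup $Q$ and $Z(L)$ an $\ell$-group has $Z(L)=1$ (Burnside fusion control plus the focal subgroup theorem give $Q=[Q,N_L(Q)]$, and the coprime decomposition $Q=[Q,H]\oplus C_Q(H)$ then forces $C_Q(N_L(Q))=1\supseteq Z(L)$), so $F^*(G)$ really is a direct product of $O_\ell(G)$ with simple groups. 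You neither state nor prove this, and your workaround would fail if it were false. (b) To invoke (ii) you need an honest group $\tH$ with $F^*(G)\lhd\tG\le\tH$, $F^*(G)\lhd\tH$ and $(F^*(G),\tH)\in\CP$. Your construction only produces pairs for the orbits of components and then asserts that the $\ell'$-kernel $C_{\tG}(F^*(G))$ is ``absorbed as another centralizer factor by (i)''; but (i) builds direct products, not an overgroup containing the given $\tG$, and you never exhibit the required embedding $\tG\hookrightarrow\tH$ (this is precisely the content of the Fong--Harris lemma, including the point that the image of the component stabilizer lands inside $\hat H_i$, where you do correctly note that conjugacy of Hall $\ell'$-subgroups of $\Out(H_i)$ is needed). (c) For (*') you apply (vi) at the end to the big group, which would require verifying $\tG=\tG_0C_{\tG}(P)$ for your constructed $\tG_0$; the paper instead applies (vi) at the outset to each $G\in\CF$, using (v) to reduce to $\tG\le\hat G$ and the Frattini argument to get $\hat G=\tG C_{\hat G}(P)$ from the hypothesis on Sylow normalizers, thereby upgrading the (*')-hypothesis to the (*)-hypothesis before the global argument starts. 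You should either restructure to do the same or supply the missing verification.
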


\begin{proof}
Let us show first that the assumptions for (*') imply those for (*).
Let $G\in\CF$ and $\tilde{G}$ as in the ``resp.'' case of the
proposition. Because of (*,v), we may assume that $O_{\ell'}(\tilde{G})=1$,
hence we may assume that $\tilde{G}\le\hat{G}$.
We have $\hat{G}=\tilde{G}C_{\hat{G}}(P)$, and so $(G,\hat{G})\in\CP$ by
(*',vi).

\smallskip
Let us now prove the proposition in the case (*).
One may assume that $O_{\ell'}(G)=1$.
It follows from the classification of finite simple groups \cite[\S 5]{FoHa}
that there is a collection
\begin{itemize}
\item $(H_0,\tilde{H}_0)\in\CE$ where $H_0$ is an $\ell$-group,
\item $H_1,\ldots,H_n\in\CF$,
\item $d_1,\ldots,d_n\in\BZ_{>0}$, and
\item $L_1,\ldots,L_n$ a family of $\ell'$-subgroups of $\GS_{d_1},\ldots,
\GS_{d_n}$,
\end{itemize}
and there are embeddings
$$H_0\times H_1^{d_1}\times\cdots\times H_n^{d_n}\lhd
G\lhd \tilde{G}\le \tH_0\times \hat{H}_1\wr L_1\times\cdots\times \hat{H}_n\wr L_n.$$
where $H_i\lhd\hat{H}_i$ and $\hat{H}_i/H_i$ is a Hall $\ell'$-subgroup
of $\Out(H_i)$.

Property (*,iv) ensures that $(H_0,\tilde{H}_0)\in\CP$. Assume $i>0$.
Consider a pair $(H_i,\tH_i)\in\CP$ as in the proposition. We have
$\tH_i/O_{\ell'}(\tH_i)\simeq \hat{H}_i$ and we deduce from (*,v) that
$(H_i,\hat{H}_i)\in\CP$. Hence,
$(H_i^{d_i},\hat{H}_i\wr L_i)\in\CP$ by (*,iii). We deduce that
$(H_0\times H_1^{d_1}\times\cdots\times H_n^{d_n},
\tH_0\times \hat{H}_1\wr L_1\times\cdots\times \hat{H}_n\wr L_n)\in\CP$
by (*,i),
hence $(G,\tilde{G})\in\CP$ by (*,ii).
\end{proof}

\subsubsection{Equivalences and Brou\'e's conjecture}

\begin{prop}
\label{pr:stabBroue1}
Let $\CC$ be data satisfying (S1) and (S2).
Let $\CP$ be the set of pairs $(G,\tilde{G})\in\CE$ such that there is a
$\tilde{G}/G$-equivariant $\CC$-equivalence between the principal blocks
of $G$ and $N_G(P)$. The set $\CP$ satisfies property (*').
\end{prop}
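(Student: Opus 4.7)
I verify each of the six conditions (i)--(vi) of (*') separately, using the tools developed in \S\ref{se:stabilityequiv}.

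Properties (i), (iii) and (iv) are direct. For (i), if $(G_i,\tG_i)\in\CP$ has Sylow $\ell$-subgroup $P_i$ and $\tG_i/G_i$-equivariant equivalence $X_i$, then $P_1\times P_2$ is Sylow in $G_1\times G_2$ with $N_{G_1\times G_2}(P_1\times P_2)=N_{G_1}(P_1)\times N_{G_2}(P_2)$, and Lemma~\ref{le:extensionproduct} yields the required $(\tG_1\times\tG_2)/(G_1\times G_2)$-equivariant equivalence $X_1\otimes_R X_2$. For (iii), iterated application of Lemma~\ref{le:extensionproduct} gives a $(\tG/G)^n$-equivariant equivalence $X^{\otimes n}$ between $B_0(G^n)$ and $B_0(N_G(P)^n)=B_0(N_{G^n}(P^n))$, and the canonical extension of the tensor power to an $L$-action (on which Lemma~\ref{le:wreath} is built) upgrades the equivariance to $(\tG/G)^n\rtimes L=(\tG\wr L)/G^n$. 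For (iv), $G=P=N_G(P)$ and $RB_0(G)$ itself is a trivially equivariant $\CC$-equivalence.

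For (ii), since $G\le\tilde H$ and $[\tilde H:H]$ is coprime to $\ell$, the index $[G:H]$ is coprime to $\ell$ as well, so the Sylow $\ell$-subgroup $P$ of $H$ is simultaneously a Sylow $\ell$-subgroup of $G$, $\tG$ and $\tilde H$. The Frattini argument furnishes compatible isomorphisms $\tilde H/H\cong N_{\tilde H}(P)/N_H(P)$, $\tG/G\cong N_{\tG}(P)/N_G(P)$ and $G/H\cong N_G(P)/N_H(P)$, so the quadruple $(N_H(P),N_G(P),N_{\tG}(P),N_{\tilde H}(P))$ satisfies the hypotheses of Lemma~\ref{le:extension}, whose conclusion is precisely a $\tG/G$-equivariant $\CC$-equivalence between $B_0(G)$ and $B_0(N_G(P))$. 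For (v), the projection $G\to G/O_{\ell'}(G)$ induces an isomorphism of principal blocks, and the identity $N_G(P)\cap O_{\ell'}(G)=C_{O_{\ell'}(G)}(P)\subseteq O_{\ell'}(N_G(P))$ (and its counterpart for $\tG$) identifies the principal block of $N_G(P)$ with that of the normalizer of the image of $P$ in $G/O_{\ell'}(G)$, so equivariant $\CC$-equivalences for $(G,\tG)$ and for $(G/O_{\ell'}(G),\tG/O_{\ell'}(\tG))$ correspond by inflation.

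The main obstacle is (vi), where the underlying complex must be preserved but its equivariance enlarged. Given $(G,\tG)\in\CP$ with equivariant equivalence $X$, and $\tG\le\hat G$ satisfying $\hat G=\tG\, C_{\hat G}(P)$, note that $C_{\hat G}(P)\le N_{\hat G}(P)$ because $P$ is abelian, and a short computation writing elements via $\hat G=\tG\, C_{\hat G}(P)$ yields $N_{\hat G}(P)=N_{\tG}(P)\cdot C_{\hat G}(P)$. Hence Lemma~\ref{le:AlperinDade} applies to both triples $(G,\tG,\hat G)$ and $(N_G(P),N_{\tG}(P),N_{\hat G}(P))$, yielding canonical algebra isomorphisms $B_0(\tG)\iso B_0(\hat G)$ and $B_0(N_{\tG}(P))\iso B_0(N_{\hat G}(P))$ that restrict to the identity on $B_0(G)$ and $B_0(N_G(P))$ respectively. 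Promoting $X$ to an equivalence between $B_0(\tG)$ and $B_0(N_{\tG}(P))$ via Lemma~\ref{le:extensionequiv} and transporting across these Alperin--Dade isomorphisms produces an equivalence between $B_0(\hat G)$ and $B_0(N_{\hat G}(P))$ whose restriction to $B_0(G)\otimes B_0(N_G(P))^\opp$ is $X$, now endowed with a $\hat G/G$-equivariant structure that refines the given $\tG/G$-equivariance. The delicate point is that the Alperin--Dade transport preserves the $\CC$-equivalence condition: this uses~(S2), since the bimodules of Lemma~\ref{le:AlperinDade} are $R$-projective, so tensoring with them sends $\CC$ into $\CC$.
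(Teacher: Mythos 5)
Your treatment of (i)--(v) is correct and follows essentially the same route as the paper: Lemmas \ref{le:extensionproduct}, \ref{le:extension} and \ref{le:wreath} for (i), (ii), (iii), the choice $X=RG$ for (iv), and the isomorphism of principal blocks under killing $O_{\ell'}$ for (v).

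The gap is in (vi). Membership of $(G,\hat{G})$ in $\CP$ requires a complex $Y\in\Comp^b\bigl(RN_{\Delta}(G,\hat{G})\bigr)$ --- a complex over the \emph{diagonal} group built from $\hat{G}$ and $N_{\hat{G}}(P)$ --- whose restriction to $N_G(P)\times G^\opp$ induces a $\CC$-equivalence between $B_0(G)$ and $B_0\bigl(N_G(P)\bigr)$. What your transport argument produces is an equivalence between the big blocks $B_0(\hat{G})$ and $B_0\bigl(N_{\hat{G}}(P)\bigr)$, i.e.\ a complex of $\bigl(B_0(N_{\hat{G}}(P))\otimes B_0(\hat{G})^\opp\bigr)$-modules. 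Its restriction to $N_G(P)\times G^\opp$ is not $X$ but a direct sum of $[\tG:G]$ conjugates of $\Res_{N_G(P)\times G^\opp}(X)$, and passing from an equivalence at the level of $\hat{G}$ back down to an equivariant complex at the level of $G$ is a descent problem that is not automatic (compare Proposition \ref{pr:descentstable}, which needs additional hypotheses). So the object certifying $(G,\hat{G})\in\CP$ is never constructed. Moreover the closing appeal to (S2) is misplaced: (S2) concerns external tensor products over direct products of groups and is what makes (i) and (iii) work; it says nothing about transport along Morita bimodules, and in fact no such preservation statement is needed for (vi).

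The repair is to stay diagonal throughout. From $\hat{G}=\tG C_{\hat{G}}(P)$ and your identity $N_{\hat{G}}(P)=N_{\tG}(P)C_{\hat{G}}(P)$ one gets
$N_{\Delta}(G,\hat{G})=N_{\Delta}(G,\tG)\,C_{N_{\Delta}(G,\hat{G})}(P^\en)$. Setting
$Y=b_0\bigl(N_{\Delta}(G,\tG)\bigr)\Ind_{N_{\Delta}(G,\tG)}^{N_{\Delta}(G,\hat{G})}(X)$,
Lemma \ref{le:AlperinDade} gives $\Res_{N_{\Delta}(G,\tG)}(Y)\simeq X$; in particular the restriction of $Y$ to $N_G(P)\times G^\opp$ coincides with that of $X$ and is therefore still a $\CC$-equivalence, while $Y$ carries the required $\hat{G}/G$-equivariant structure by construction.
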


\begin{proof}
Conditions (i), (ii) and (iii) follow from Lemmas \ref{le:extensionproduct},
\ref{le:extension} and \ref{le:wreath}, respectively. 
Since the principal blocks of $G$ and $G/O_{\ell'}(G)$ are isomorphic, we
have $(G,\tG)\in\CP$ if and only if $\bigl(G/O_{\ell'}(G),\tG/O_{\ell'}(G)\bigr)\in\CP$.
Assume that $O_{\ell'}(G)=1$. Then $O_{\ell'}(\tG)$ centralizes $G$, and hence
$(G,\tG)\in\CP$ if and only if $\bigl(G,\tG/O_{\ell'}(\tG)\bigr)\in\CP$. So,
condition (v) holds.
Condition (iv) holds as well: take $X=RG$.

\smallskip
Consider now $G\lhd\tilde{G}\le\hat{G}$ with $G\lhd\hat{G}$ and
$\ell\nmid[\hat{G}:G]$. Let $P$ be a Sylow $\ell$-subgroup of $G$
and assume that $\hat{G}=\tG C_{\hat{G}}(P)$.
Let $X\in\Comp^b(RN_{\Delta}(G,\tG))$ inducing
an equivariant $\CC$-equivalence between the  principal blocks
of $G$ and $N_G(P)$. We have
$$N_{\Delta}(G,\hat{G})=\bigl(N_G(P)\times G^\opp\bigr)\Delta N_{\tG}(P)
\Delta C_{\hat{G}}(P)=
N_{\Delta}(G,\tilde{G})C_{N_{\Delta}(G,\hat{G})}(P^\en).$$
Let $Y=b_0\bigl(N_{\Delta}(G,\tilde{G})\bigr)\Ind_{N_{\Delta}(G,\tilde{G})}^{N_{\Delta}
(G,\hat{G})}(X)$.
We have $\Res_{N_{\Delta}(G,\tilde{G})}(Y)\simeq X$ by
Lemma \ref{le:AlperinDade}, and this shows condition (vi).
\end{proof}

The next proposition is clear.
\begin{prop}
\label{pr:examplesS1S2}
The following data $\CC$ satisfy properties (S1) and (S2):
\begin{itemize}
\item acyclic complexes with $R$-projective components
 ($\CC$-equivalences are standard derived equivalences);
\item contractible complexes with $R$-projective
components ($\CC$-equivalences are Rickard equivalences).
\end{itemize}
\end{prop}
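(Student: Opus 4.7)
The plan is to verify properties (S1) and (S2) in turn for each of the two classes, exploiting the exactness of induction and the standard homological behaviour of tensor products of complexes. Since the author declares the proposition ``clear'', the goal is to assemble the standard arguments cleanly; there is no substantive obstacle.

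For (S1), the induction functor $\Ind_H^G = RG\otimes_{RH}-$ is exact because $RG$ is free as a right $RH$-module, and it preserves $R$-projectivity of terms because $RG\otimes_{RH}(RH\otimes_R M)\simeq RG\otimes_R M$ whenever $M$ is $R$-projective, so termwise $R$-projective summands remain $R$-projective. For the acyclic class, exactness of $\Ind_H^G$ immediately gives that $\Ind_H^G(X)$ is acyclic when $X$ is. For the contractible class, if $h$ is a contracting homotopy on $X$ (i.e., $d_X h+hd_X=\id_X$), then $\id_{RG}\otimes h$ is a contracting homotopy on $RG\otimes_{RH}X$, hence $\Ind_H^G(X)$ is contractible.

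For (S2), first note that each term of $X\otimes_R Y$ is a finite direct sum of modules $X^a\otimes_R Y^b$, which are $R$-projective since both factors are, so the $R$-projectivity condition on components is automatic. For the contractible class, if $h$ is a contracting homotopy on $X$, then the map $H=h\otimes\id_Y$, viewed on the total complex with differential $d_X\otimes 1+(-1)^{|\cdot|}1\otimes d_Y$, satisfies $dH+Hd=\id$: the cross-terms $h\otimes d_Y$ pick up opposite signs and cancel, leaving $(d_Xh+hd_X)\otimes\id_Y=\id_{X\otimes Y}$. Hence $X\otimes_R Y$ is contractible.

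For the acyclic class, the only point to be careful about is that $R$ need not be a field. Here one uses that $R$-projective implies $R$-flat: since $Y^b$ is $R$-flat for each $b$, the functor $-\otimes_R Y^b$ is exact, so $X\otimes_R Y^b$ is acyclic whenever $X$ is. The total complex of the bounded double complex $\{X^a\otimes_R Y^b\}$ then has acyclic columns (or rows), and an easy induction on the length of $Y$ (using the termination of the filtration by column-degree and the long exact sequence attached to the stupid truncations of $Y$) shows that $X\otimes_R Y$ is acyclic. This completes the verification of (S1) and (S2) in both cases.
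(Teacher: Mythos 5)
The paper offers no argument here---it simply declares the proposition clear---and your write-up supplies exactly the standard verifications the authors had in mind: exactness of induction and transport of contracting homotopies for (S1), and $R$-flatness of $R$-projective modules together with the Koszul-sign cancellation for (S2), with the bounded double-complex induction closing the acyclic case. One small quibble: your justification that the terms of $\Ind_H^G(X)$ remain $R$-projective (via summands of modules $RH\otimes_R M$) is roundabout and would require $|H|$ invertible in $R$ to work as stated; the direct observation that $\Res_R\bigl(RG\otimes_{RH}X^i\bigr)$ is a finite direct sum of copies of $\Res_R X^i$ is cleaner and unconditional.
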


\subsection{Automorphisms}
\label{se:auto}
We provide extension results for derived equivalences in the
presence of automorphism groups. The main results 
are due to Marcus \cite{Ma1,Ma2,Ma3} (except for \S \ref{se:extensionperverse}).

\subsubsection{Extensions of modules}
\label{se:extmodules}
Let $A$ be a $k$-algebra and $G$ a finite group endowed with a
homomorphism $\phi:G\to\Aut(A)$.
Let $M$ be an $(A\rtimes G)$-module. The structure
of $(A^\en\rtimes G^\en)$-module on $\End_k(M)$ restricts to
a structure of $k\Delta G$-module on $\End_A(M)$. The corresponding morphism
$G\to\Aut\bigl(\End_A(M)\bigr)$ is the following:
given $g\in G$ and $f\in\End_A(M)$, we set
$g(f)(m)=g(f(g^{-1}(m))$ for $m\in M$.
We have a canonical isomorphism $kG\otimes M\iso \Ind^{A\rtimes G}\Res_A M$, and an isomorphism of
algebras 
$$\End_A(M)\rtimes G\iso \End_{A\rtimes G}(kG\otimes M),\
f\otimes g\mapsto \bigl(h\otimes m\mapsto hg\otimes hf(h^{-1}m)\bigr).$$

Recall also that $J(A)G\subset J(A\rtimes G)$, hence 
$\Res_A^{A\rtimes G}$ preserves semi-simplicity.

\subsubsection{Two-sided tilting complexes}
Let $k$ be a commutative ring.
Let $A$ and $B$ be two flat $k$-algebras and $G$ a finite group with
homomorphisms $G\to\Aut(A)$ and $G\to\Aut(B)$.

\smallskip
We start with a classical result \cite{Ma1}.

\begin{lemma}
\label{le:equivalencetilting}
Let $X$ be a complex of $\bigl((A\otimes B^\opp)\rtimes G\bigr)$-modules.
Then $\Res_{A\otimes B^\opp}X$ is a two-sided tilting complex for $(A,B)$
if and only if $\Ind^{(A\rtimes G)\otimes (B\rtimes G)^\opp}X$
is a two-sided tilting complex for $(A\rtimes G,B\rtimes G)$.
\end{lemma}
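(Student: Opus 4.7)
Plan: I will reformulate the lemma using the standard characterization of two-sided tilting complexes: a bounded complex $Z$ of $(A',B')$-bimodules whose terms are projective as left $A'$-modules and as right $B'$-modules is a two-sided tilting complex for $(A',B')$ if and only if the canonical morphisms $A'\to \End^\bullet_{(B')^\opp}(Z)$ and $(B')^\opp\to \End^\bullet_{A'}(Z)$ are quasi-isomorphisms in the derived categories of $(A')^\en$- and $((B')^\opp)^\en$-modules, respectively. Writing $Y=\Ind^{(A\rtimes G)\otimes(B\rtimes G)^\opp}_{(A\otimes B^\opp)\rtimes G}X$, it is then enough, by the symmetry between $A$ and $B^\opp$, to transfer projectivity on each side and the quasi-isomorphism condition on one side between $X$ and $Y$.

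The first step is to describe $Y$ after restriction and to transfer projectivity. The inclusion $\iota:(A\otimes B^\opp)\rtimes G\hookrightarrow (A\rtimes G)\otimes(B\rtimes G)^\opp$ exhibits the target as free of rank $|G|$ over the source, and a direct computation using the tensor identity gives $\Res_A Y\simeq kG\otimes \Res_A X$ as a complex of $(A,B)$-bimodules (and symmetrically on the right). Since $A\rtimes G$ is free as a right $A$-module and $B\rtimes G$ is free as a left $B$-module, each term of $X$ is projective as a left $A$-module (resp.\ right $B$-module) if and only if each term of $Y$ is projective as a left $A\rtimes G$-module (resp.\ right $B\rtimes G$-module).

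The main step is the identification of endomorphism complexes. Applying the isomorphism $\End_A(M)\rtimes G\iso \End_{A\rtimes G}(kG\otimes M)$ of \S \ref{se:extmodules} with the extra right $(B\rtimes G)$-structure carried along produces a canonical isomorphism
\[
\End^\bullet_{(B\rtimes G)^\opp}(Y)\iso \End^\bullet_{B^\opp}(X)\rtimes G
\]
in $\Comp^b\bigl((A\rtimes G)^\en\bigr)$, and under this isomorphism the canonical map $A\rtimes G\to \End^\bullet_{(B\rtimes G)^\opp}(Y)$ corresponds to the map obtained from $A\to \End^\bullet_{B^\opp}(X)$ by applying $-\rtimes G$. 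Since $kG$ is a free $k$-module, the functor $-\rtimes G$ preserves and reflects quasi-isomorphisms, so the canonical map for $Y$ is a quasi-isomorphism if and only if the one for $X$ is. Swapping the roles of $A$ and $B^\opp$ handles the other side and finishes the argument. The hard part is the two-variable unwinding of the endomorphism identification: one must check that the $G$-action on $\End^\bullet_{B^\opp}(X)$ coming from both the $A$- and $B$-sides is compatible with the structure of $D$ as $(A\rtimes G)\otimes (B\rtimes G)^\opp$, and that the canonical unit maps are preserved by the adjunction along $\iota$.
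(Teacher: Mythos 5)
Your proof is correct and follows essentially the same route as the paper's: both reduce the statement to the canonical isomorphism $\End^\bullet_{(B\rtimes G)^\opp}(\Ind X)\simeq \End^\bullet_{B^\opp}(X)\rtimes G$ (obtained from the Mackey/adjunction computation of Lemma \ref{le:extensionequiv}, equivalently the identification of \S \ref{se:extmodules}), compatibility with the unit maps, and the fact that $-\rtimes G$ preserves and reflects quasi-isomorphisms. The only difference is that the paper works directly with the derived endomorphism complex $R\End^\bullet$, which avoids your preliminary reduction to a representative of $X$ with terms projective on each side.
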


\begin{proof}
Let $X_1=\Res_{A\otimes B^\opp}X$ and
$X_2=\Ind^{(A\rtimes G)\otimes (B\rtimes G)^\opp}X$.
Following the proof of 
Lemma \ref{le:extensionequiv}, we obtain a commutative diagram
$$\xymatrix{
B\rtimes G\ar[rr]^-{\can}\ar@{=}[d] &&
 R\End^{\bullet}_A(X_1)\rtimes G\ar[d]^\sim \\
B\rtimes G\ar[rr]^-{\can} &&
 R\End^{\bullet}_{A\rtimes G}(X_2)
}$$
There is a similar commutative diagram with the roles of $A$ and $B$
reversed. The lemma follows.
\end{proof}

\subsubsection{Rickard complexes}
The following lemma is an equivariant version of a result of Rickard
\cite[p.336]{Ri5}.

\begin{lemma}
Assume that $k$ is a regular noetherian ring and $A$ and $B$ are symmetric 
$k$-algebras.
Let $C$ be a complex of $\bigl((A\otimes B^\opp)\rtimes G\bigr)$-modules which
restricts to a two-sided tilting complex for $(A,B)$. There
is a complex $D$ of $\bigl((A\otimes B^\opp)\rtimes G\bigr)$-modules that is
quasi-isomorphic to $C$ and which restricts to a Rickard complex
for $(A,B)$.
\end{lemma}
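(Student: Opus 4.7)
The plan is to carry out Rickard's non-equivariant construction from \cite[p.~336]{Ri5} $G$-equivariantly. The key observation is that both projective resolution and smart truncation are functorial operations on complexes over the skew group algebra $(A\otimes B^\opp)\rtimes G$, so the $G$-structure propagates for free, while the only genuinely hard step---showing that the leftmost term of the truncated complex is projective on each side---is a statement about the underlying $(A\otimes B^\opp)$-module structure that is insensitive to the $G$-action.

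First I would choose a bounded-above complex $P$ of finitely generated projective $\bigl((A\otimes B^\opp)\rtimes G\bigr)$-modules together with a quasi-isomorphism $P\xrightarrow{\sim} C$; this exists since $k$ is noetherian, $A$ and $B$ are symmetric (hence finitely generated projective over $k$), and $C$ has finitely generated terms and is bounded. Because the inclusion $A\otimes B^\opp\hookrightarrow (A\otimes B^\opp)\rtimes G$ makes the larger ring free of rank $|G|$ over the smaller, every projective $\bigl((A\otimes B^\opp)\rtimes G\bigr)$-module restricts to a projective $(A\otimes B^\opp)$-module, and in particular to a bimodule that is projective as a left $A$-module and as a right $B$-module; this applies to each $P^i$. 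Next, I would pick an integer $c$ strictly below every cohomological degree of $C$ and set $D=\tau^{\geq c}(P)$. Smart truncation being a functor, $D$ is canonically a complex of $\bigl((A\otimes B^\opp)\rtimes G\bigr)$-modules, and the canonical map $P\twoheadrightarrow D$ is an equivariant quasi-isomorphism since $H^i(P)=H^i(C)=0$ for $i<c$. Thus $D$ is a bounded complex equivariantly quasi-isomorphic to $C$, and in degrees $>c$ its terms coincide with those of $P$, so they restrict to bimodules projective on each side.

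The remaining step---the main obstacle---is to verify that $\Res_{A\otimes B^\opp}(D^c)$ is also projective as a left $A$-module and as a right $B$-module. This is a purely non-equivariant statement about $\Res_{A\otimes B^\opp}(D)$, a bounded complex of bimodules quasi-isomorphic to the two-sided tilting complex $\Res_{A\otimes B^\opp}(C)$, all of whose other terms are already two-sided projective. The symmetry hypothesis on $A$ and $B$ is exactly what forces the leftmost term to inherit this property, essentially because over a self-injective algebra projective and injective modules coincide, so no non-split extension can appear at the left end of such a truncation; this is the content of Rickard's non-equivariant lemma \cite[p.~336]{Ri5}. Applying that conclusion to $\Res_{A\otimes B^\opp}(D)$ completes the argument.
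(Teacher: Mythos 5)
Your overall strategy is exactly the paper's: replace $C$ by a bounded-above equivariant complex of finitely generated projective $\bigl((A\otimes B^\opp)\rtimes G\bigr)$-modules, smart-truncate (an operation that automatically preserves the $G$-structure), and invoke Rickard's non-equivariant analysis to see that the leftmost term of the truncation is projective on each side. The equivariance bookkeeping in your first two steps is fine and is indeed the only "new" content of the lemma.

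However, there is a genuine gap at precisely the step you flag as the main obstacle. First, the truncation degree: you take $c$ merely "strictly below every cohomological degree of $C$", whereas the paper takes $\tau_{\ge n-m}$ with $m$ the Krull dimension of $k$ and $n$ the lowest degree of non-vanishing cohomology. This is not a cosmetic difference. The mechanism that makes the leftmost term $D^c=\mathrm{coker}(P^{c-1}\to P^c)$ projective as a right $B$-module is a Tor computation: since $P^{\le c}$ (shifted) is a projective resolution of $D^c$ by bimodules projective on each side, one gets $\Ext$-free identifications $\mathrm{Tor}^B_i(D^c,N)\simeq H^{c-i}\bigl(C\otimes^\BL_B N\bigr)$ for $i\ge 1$ and all finitely generated $N$, so one needs $H^j(C\otimes^\BL_B N)=0$ for all $j<c$ \emph{and all $N$}, not just $H^j(C)=0$ for $j<c$. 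Rickard's bound, which is where the regularity and the Krull dimension of $k$ enter, is that this holds for $j<n-m$; with your choice $c\le n-1$ the argument fails whenever $\dim k\ge 2$. Second, your stated reason for the key step ("over a self-injective algebra projective and injective modules coincide, so no non-split extension can appear at the left end") is not the actual mechanism and does not prove anything here; the symmetry of $A$ and $B$ is used only to upgrade "flat, finitely generated" to "projective" and to know that projective bimodules are projective on each side, while the heavy lifting is done by the tilting hypothesis together with the finite global dimension of $k$. Since you never invoke the regularity of $k$ anywhere, the proposal as written does not establish the lemma for general regular noetherian $k$.
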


\begin{proof}
Let $C'$ be a right-bounded complex quasi-isomorphic to $C$, all of whose terms
are finitely generated and projective. Let $n\in\BZ$ such that $H^i(C)=0$
for $i<n$. Then $D=\tau_{\ge n-m}(C')$ satisfies the required property
for $m$ the Krull dimension of $k$ (see \cite[pp.135--136]{Ri5}).
\end{proof}

A proof similar to that of Lemma \ref{le:equivalencetilting} shows the following
classical result. Note that the assumption on $|G|$ is necessary to
ensure that a complex of $(A^\en\rtimes G)$-modules is contractible if
its restriction to $A^\en$ is contractible.

\begin{lemma}
Assume that $|G|\in k^\times$.
Let $X$ be a complex of $\bigl((A\otimes B^\opp)\rtimes G\bigr)$-modules.
Then $\Res_{A\otimes B^\opp}X$ is a Rickard complex for $(A,B)$ if and only
if $\Ind^{(A\rtimes G)\otimes (B\rtimes G)^\opp}X$
is a Rickard complex for $(A\rtimes G,B\rtimes G)$.
\end{lemma}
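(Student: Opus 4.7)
The plan is to mirror the proof of Lemma \ref{le:equivalencetilting}, replacing $R\End^\bullet$ by $\End^\bullet$ and quasi-isomorphism by homotopy equivalence. I use the standard characterization: a bounded complex $C$ of $(A,B)$-bimodules with terms projective as left $A$-modules and as right $B$-modules is a Rickard complex for $(A,B)$ iff the canonical maps $B\to \End^\bullet_A(C)$ and $A\to \End^\bullet_{B^\opp}(C)$ are homotopy equivalences of complexes of $B^\en$-modules and $A^\en$-modules respectively.

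Let $X_1=\Res_{A\otimes B^\opp}(X)$ and $X_2=\Ind^{(A\rtimes G)\otimes (B\rtimes G)^\opp}(X)$. Exactly the computation used for Lemma \ref{le:equivalencetilting} produces a natural isomorphism $\End^\bullet_A(X_1)\rtimes G\iso \End^\bullet_{A\rtimes G}(X_2)$ of complexes of $(B\rtimes G)^\en$-modules, intertwining the two canonical maps from $B\rtimes G$; there is a symmetric statement with the roles of $A$ and $B$ swapped. Moreover, since $|G|\in k^\times$, $A\rtimes G$ is free as an $A$-module on each side, so an $(A\rtimes G)$-module is projective iff its restriction to $A$ is projective (the nontrivial direction uses averaging by $|G|^{-1}\sum_g g$). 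Combined with the Mackey-type isomorphism $\Res_{A\otimes B^\opp}(X_2)\simeq kG\otimes_k X_1$ (a direct sum of twists of $X_1$), this transfers projectivity of terms on each side between $X_1$ and $X_2$.

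For $(\Rightarrow)$, assume $X_1$ is Rickard. Then the cone of $\can\colon B\to \End^\bullet_A(X_1)$ admits a $B^\en$-linear contracting homotopy. Tensoring over $k$ with $kG$ yields a contracting homotopy $h$ for the cone of $B\rtimes G\to \End^\bullet_A(X_1)\rtimes G$ that is $B^\en$-linear. The key point, which is where $|G|\in k^\times$ enters, is to promote $h$ to a $(B\rtimes G)^\en$-linear contracting homotopy; identifying $(B\rtimes G)^\en\simeq B^\en\rtimes (G\times G)$, this is achieved by the averaging $h\mapsto |G|^{-2}\sum_{(g_1,g_2)\in G\times G}(g_1,g_2)\,h\,(g_1,g_2)^{-1}$. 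Transporting this homotopy equivalence across the right-hand isomorphism of the diagram and repeating the argument with $A$ and $B$ swapped, together with the transfer of projectivity, shows that $X_2$ is Rickard.

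For $(\Leftarrow)$, assume $X_2$ is Rickard. The commutative diagram and the right-hand isomorphism show that $B\rtimes G\to \End^\bullet_A(X_1)\rtimes G$ is a homotopy equivalence of $(B\rtimes G)^\en$-modules, so its cone is contractible as a complex of $(B\rtimes G)^\en$-modules, and in particular as a complex of $B^\en$-modules. Decompose $B\rtimes G=\bigoplus_g Bg$ and $\End^\bullet_A(X_1)\rtimes G=\bigoplus_g \End^\bullet_A(X_1)g$ as $B^\en$-modules, each $g$-summand being a twist, on the right, of the regular bimodule by the automorphism of $B$ attached to $g$. The map and hence its cone decompose accordingly, and the $g=1$ summand of the cone is the cone of $\can\colon B\to \End^\bullet_A(X_1)$. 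Since a $B^\en$-direct summand of a contractible $B^\en$-complex is contractible, $\can$ is a homotopy equivalence of $B^\en$-modules. The symmetric argument and the transfer of projectivity then give that $X_1$ is Rickard. The main obstacle is this averaging step in $(\Rightarrow)$: contractibility as a $B^\en$-module complex does not automatically upgrade to contractibility as a $(B\rtimes G)^\en$-module complex without the invertibility of $|G|$ in $k$, which is precisely the point flagged in the remark preceding the lemma.
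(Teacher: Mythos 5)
Your proof is correct and follows exactly the route the paper intends: it adapts the commutative diagram of Lemma \ref{le:equivalencetilting} to the homotopy-category setting, and supplies the averaging argument (possible since $|G|\in k^\times$) that the paper's accompanying remark identifies as the reason the hypothesis on $|G|$ is needed to pass from $B^\en$-contractibility to $(B\rtimes G)^\en$-contractibility. The paper gives no further details, so your write-up is simply an explicit version of its sketched argument.
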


\subsubsection{Tilting complexes}
\label{se:automorphismstilting}
Let $A$ be a flat $k$-algebra and $G$ a finite group endowed with
a homomorphism $G\to\Aut(A)$.

\begin{lemma}
\label{le:criteriontilting}
Let $T\in\Comp(A\rtimes G)$. Then $\Res_A T$ is a tilting complex
for $A$ if and only if $kG\otimes T$ is a tilting complex for
$A\rtimes G$.
There is a canonical isomorphism
$\End_{D(A)}(T)\rtimes G\iso \End_{D(A\rtimes G)}(kG\otimes T)$.
\end{lemma}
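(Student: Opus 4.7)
The plan is to reduce everything to the adjunction $(\Ind,\Res)$ for the extension $A\subset A\rtimes G$, combined with the identification $kG\otimes T\iso \Ind_A^{A\rtimes G}\Res_A T$ noted in \S\ref{se:extmodules}. The starting point is the following description of the restriction of the induced complex: as a complex of $A$-modules, $\Res_A\Ind_A^{A\rtimes G}\Res_A T$ is isomorphic to $\bigoplus_{g\in G}{}_g T$, where ${}_g T$ has $A$-action twisted by $g^{-1}$. Using the $G$-action on $T$ (which is available because $T\in\Comp(A\rtimes G)$), the map $t\mapsto g^{-1}t$ defines an isomorphism of $A$-complexes $T\iso {}_g T$, hence $\Res_A(kG\otimes T)\simeq \bigoplus_{g\in G}T$.

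From this I would check the three conditions defining a tilting complex in parallel. For perfectness: $\Ind$ preserves perfect complexes because $A\rtimes G$ is free as a right $A$-module and $\Ind$ sends finitely generated projective $A$-modules to finitely generated projective $A\rtimes G$-modules; conversely, $\Res$ preserves perfect complexes (as $A\rtimes G$ is finitely generated projective as an $A$-module) and $T$ is a direct summand of $\Res\Ind T\simeq\bigoplus_g T$. For the $\Hom$-vanishing, the adjunction gives
\[\Hom_{D(A\rtimes G)}(kG\otimes T,(kG\otimes T)[i])\iso \Hom_{D(A)}(T,\Res\Ind T[i])\iso \bigoplus_{g\in G}\Hom_{D(A)}(T,T[i]),\]
so one side vanishes for all $i\neq 0$ if and only if the other does. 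For the thick subcategory condition: if $A$ is in the thick subcategory of $D(A)$ generated by $T$, then applying the triangulated functor $\Ind$ (and using $\Ind A\simeq A\rtimes G$) shows that $A\rtimes G$ lies in the thick subcategory generated by $kG\otimes T$; conversely, if $A\rtimes G$ is in that subcategory, then applying the triangulated functor $\Res$ and using $\Res(A\rtimes G)\simeq\bigoplus_g A$ together with the identification $\Res(kG\otimes T)\simeq\bigoplus_g T$ puts $A$ in the thick subcategory generated by $T$.

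For the canonical isomorphism $\End_{D(A)}(T)\rtimes G\iso\End_{D(A\rtimes G)}(kG\otimes T)$, I would upgrade the module-level statement of \S\ref{se:extmodules} to the derived setting. By the adjunction and the decomposition above we already have, as $k$-modules, $\End_{D(A\rtimes G)}(kG\otimes T)\simeq\bigoplus_g \End_{D(A)}(T)$, matching $\End_{D(A)}(T)\rtimes G$ as a $k$-module. The algebra structure, including the twist by the $G$-action on $\End_{D(A)}(T)$ induced from $G\to\Aut(A)$ and the $A\rtimes G$-structure of $T$, is obtained by tracking the isomorphism explicitly on representatives, exactly as in the formula $\End_A(M)\rtimes G\iso \End_{A\rtimes G}(kG\otimes M)$ already recorded in \S\ref{se:extmodules}. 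The main (and only) delicate point is verifying that this bookkeeping survives passage to the derived category; this amounts to checking that the chosen $A$-linear identification ${}_g T\iso T$ is compatible with the $\Delta G$-action on $\End_A(T)$ coming from the $(A\rtimes G)$-structure of $T$, which is a direct calculation.
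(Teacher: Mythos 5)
Your proof is correct and follows essentially the same route as the paper's: both rest on the identification $\Res_A(kG\otimes T)\simeq T^{|G|}$, the isomorphism $R\End^\bullet_A(T)\rtimes G\iso R\End^\bullet_{A\rtimes G}(kG\otimes T)$ coming from \S\ref{se:extmodules} (equivalently, your adjunction computation), and the fact that $\Ind$ and $\Res$ are triangulated functors preserving perfect complexes, so the thick-subcategory condition transfers in both directions. Your write-up is somewhat more explicit about the adjunction and the twist ${}_gT\iso T$, but the argument is the same.
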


\begin{proof}
By \S \ref{se:extmodules}, we have
$$R\End^\bullet_A(T)\rtimes G\iso R\End^\bullet_{D(A\rtimes G)}(kG\otimes T).$$
Thus, $\Hom_{D(A)}(T,T[i])=0$ for $i\not=0$ if and only if
$\Hom_{D(A\rtimes G)}(kG\otimes T,kG\otimes T[i])=0$ for $i\not=0$.

Assume that $T$ is a tilting complex for $A$. Then, $\Ind^{A\rtimes G}(T)$ is
perfect. Also, $A$ is in the thick subcategory of $D(A)$ generated by
$T$, hence $A\rtimes G=\Ind^{A\rtimes G}A$ 
is in the thick subcategory of $D(A)$ generated by $\Ind^{A\rtimes G}T$.
So, $\Ind^{A\rtimes G}T$ is a tilting complex.

Conversely, assume that $\Ind^{A\rtimes G}T$ is a tilting complex.
We have $\Res_A (kG\otimes T)\simeq T^{|G|}$, and hence $T$ is a perfect
complex for $A$. Since $A$ is in the thick subcategory of $D(A\rtimes G)$
generated by $\Ind^{A\rtimes G}T$, it follows that $A$
is in the thick subcategory of $D(A\rtimes G)$
generated by $\Res_A\Ind^{A\rtimes G}T\simeq T^{|G|}$. It follows that
$T$ is a tilting complex for $A$.
\end{proof}

\smallskip
The following lemma is an equivariant version of a result of Keller
\cite[\S 8.3.1]{Ke}.

\begin{lemma}
\label{le:Toda}
Let $C\in\Comp(A\rtimes G)$ and let $B=\End_{D(A)}(C)$.
Assume that $\Hom_{D(A)}(C,C[n])=0$ for $n<0$. There
is a complex $X$ of $\bigl((A\otimes B^\opp)\rtimes G\bigr)$-modules and an isomorphism
$\phi:C\iso X$ in $D(A\rtimes G)$ such that the composition of canonical maps
$\End_{D(A)}(C)=B\to\End_{\Comp(A)}(X)\to \End_{D(A)}(X)$ is given by $\phi$.
\end{lemma}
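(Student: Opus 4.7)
The plan is to follow Keller's non-equivariant argument from \cite[\S 8.3.1]{Ke}, verifying that each construction is $G$-equivariant. First, I would replace $C$ by a bounded above complex $P$ of projective $A\rtimes G$-modules together with a quasi-isomorphism $P\iso C$ in $D(A\rtimes G)$. Since $A\rtimes G$ is free of finite rank over $A$, $P$ is also a bounded above complex of projective $A$-modules, so the DG algebra $E=\End^\bullet_A(P)$ computes $R\End^\bullet_A(C)$. The group $G$ acts on $E$ by DG algebra automorphisms via $g\cdot f=g\circ f\circ g^{-1}$, and by hypothesis $H^{<0}(E)=0$ while $H^0(E)=B$ as $G$-equivariant $k$-algebras. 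Moreover $P$ is naturally a DG module over $(A\otimes E^\opp)\rtimes G$.

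Next, I would introduce the $G$-stable sub-DG-algebra $\tilde{B}\subset E$ defined by $\tilde{B}^i=E^i$ for $i<0$, $\tilde{B}^0=\ker d^0_E$, and $\tilde{B}^i=0$ for $i>0$. A direct check using the graded Leibniz rule shows that $\im d^{-1}_E$ is a two-sided ideal of $\tilde{B}^0$, so the canonical projection $\pi:\tilde{B}\twoheadrightarrow B$ (the quotient map $\ker d^0_E\twoheadrightarrow B$ in degree $0$, and zero elsewhere) is a surjective quasi-isomorphism of $G$-equivariant DG algebras. Via the inclusion $\tilde{B}\hookrightarrow E$, the complex $P$ becomes a DG module over $(A\otimes\tilde{B}^\opp)\rtimes G$.

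Finally, I would transfer the right action along $\pi$ by setting $X=P\otimes^\BL_{\tilde{B}}B$, realised explicitly through a $G$-equivariant semifree (two-sided bar) resolution of $B$ as a DG $(\tilde{B},B)$-bimodule. The resulting $X$ is a complex of $((A\otimes B^\opp)\rtimes G)$-modules, and because $\pi$ is a quasi-isomorphism of DG algebras the canonical map $P=P\otimes_{\tilde{B}}\tilde{B}\to P\otimes^\BL_{\tilde{B}}B=X$ is an isomorphism in $D(A\rtimes G)$; composing with $P\iso C$ yields the desired $\phi:C\iso X$. Right multiplication by $B$ on the second tensor factor defines the structural map $B\to\End_{\Comp(A)}(X)$, and on cohomology this recovers the identification $B=H^0(\tilde{B})=H^0(E)=\End_{D(A)}(P)\cong\End_{D(A)}(X)$, which is precisely the map induced by $\phi$.

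The main obstacle is the transfer step: the derived tensor product $P\otimes^\BL_{\tilde{B}}B$ must be modelled by an honest complex of $(A,B)$-bimodules carrying a $G$-action, with the correct bimodule structure at the level of cohomology. The classical two-sided bar construction realises this strictly, and $G$-equivariance is automatic from the functoriality of the construction in the $G$-equivariant input data $(A,\tilde{B},B,P)$.
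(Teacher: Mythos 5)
Your argument is correct and is essentially the paper's own proof: both replace $C$ by a homotopically projective representative, truncate the ($G$-equivariant) endomorphism dg algebra $E=\End^\bullet_A(P)$ at degree $0$ using the vanishing hypothesis so that $\tau_{\le 0}(E)\to B$ is a quasi-isomorphism of dg algebras, and then transfer the right action along this quasi-isomorphism via a derived tensor product, which the paper writes as $X=H^0(R)\otimes^\BL_{R_-}C$ with $R_-=\tau_{\le 0}\bigl((A\otimes E^\opp)\rtimes G\bigr)$. The only (harmless, given the intended applications to tilting complexes) difference is that you take a bounded-above projective resolution where the paper allows an arbitrary homotopically projective one.
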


\begin{proof}
Up to isomorphism in $D(A\rtimes G)$, we
may assume that $C$ is homotopically projective.
Let $R=\bigl(A\otimes \End_A^\bullet(C)^\opp\bigr)\rtimes G$, a dg algebra.
The complex $C$ extends to a dg $R$-module. Let
$R_-=\tau_{\le 0}(R)$, a dg subalgebra of $R$. We have $H^0(R_-)\iso
H^0(R)=(A\otimes B)\rtimes G$.
Let $X=H^0(R)\otimes_{R_-}^\BL C$. The canonical quasi-isomorphism
$R_-\iso H^0(R_-)$ induces an isomorphism $\phi:C\iso X$ in $D(A\rtimes G)$.
It satisfies the stated property.
\end{proof}

Lemma \ref{le:Toda} gives the following useful criterion to extend
equivalences.

\begin{lemma}
\label{le:ext2side}
Let $C\in\Comp(A\rtimes G)$ be such that
$\Res_A C$ is a tilting complex. Let $B=\End_{D(A)}(C)$.
There is a complex $X$ of $\bigl((A\otimes B^\opp)\rtimes G\bigr)$-modules
and an isomorphism
$\phi:C\iso X$ in $D(A\rtimes G)$ such that $\Res_{A\otimes B^\opp}(X)$
is a two-sided tilting complex and the composition of canonical maps
$\End_{D(A)}(C)=B\to\End_{\Comp(A)}(X)\to \End_{D(A)}(X)$ is given by $\phi$.
\end{lemma}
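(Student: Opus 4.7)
The plan is to apply Lemma \ref{le:Toda} and then recognize its output as a two-sided tilting complex via a standard criterion. Since $\Res_A C$ is a tilting complex, $\Hom_{D(A)}(C,C[n])=0$ for all $n\not=0$, and in particular for $n<0$, so the hypothesis of Lemma \ref{le:Toda} is satisfied. That lemma supplies a complex $X$ of $\bigl((A\otimes B^\opp)\rtimes G\bigr)$-modules together with an isomorphism $\phi:C\iso X$ in $D(A\rtimes G)$ such that the composition $B=\End_{D(A)}(C)\to\End_{\Comp(A)}(X)\to\End_{D(A)}(X)$ coincides with the conjugation isomorphism induced by $\phi$.

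It remains to verify that $Y=\Res_{A\otimes B^\opp}(X)$ is a two-sided tilting complex for $(A,B)$. Restricting $\phi$ to $D(A)$ shows that $\Res_A Y\simeq \Res_A C$ is itself a tilting complex, with $\End_{D(A)}(Y)\simeq B$ via conjugation by $\phi$. The right $B^\opp$-structure on $Y$ yields a canonical algebra homomorphism $B\to\End_{D(A)}(Y)$; the compatibility provided by Lemma \ref{le:Toda} identifies this homomorphism with the conjugation isomorphism, so it is itself an isomorphism. A standard result of Rickard then shows that a bounded complex of $(A,B)$-bimodules whose restriction to $A$ is a tilting complex and whose right $B$-action induces $B\iso\End_{D(A)}(Y)$ is automatically a two-sided tilting complex.

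The substantive work is thus already performed by Lemma \ref{le:Toda}, which constructs $X$ while enforcing both the bimodule and the $G$-equivariant structures simultaneously. Once $X$ exists with the stated endomorphism compatibility, the two-sided tilting property is automatic from the classical (non-equivariant) theory, and the main obstacle, namely matching the equivariance of $C$ with the $(A,B)$-bimodule structure, has been absorbed into the proof of \ref{le:Toda} via the truncation of the dg algebra $\bigl(A\otimes\End_A^\bullet(C)^\opp\bigr)\rtimes G$.
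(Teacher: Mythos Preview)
Your proposal is correct and follows the paper's approach: the paper treats Lemma~\ref{le:ext2side} as an immediate consequence of Lemma~\ref{le:Toda}, giving no separate proof, and you have simply spelled out the details of that deduction. The only additional content you supply is the routine verification that the output of Lemma~\ref{le:Toda} is a two-sided tilting complex via Rickard's criterion, which the paper leaves implicit.
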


\begin{lemma}
\label{le:lefttocentre}
Let $T$ be a two-sided tilting complex for $(A,B)$. Assume that there
is a complex $C$ of $(A\rtimes G)$-modules such that
$T\simeq C$ in $D(A)$. There is an action of $G$ on $B$ and a complex $X$
of $\bigl((A\otimes B^\opp)\rtimes G\bigr)$-modules such that $X\simeq T$ in
$D(A\otimes B^\opp)$.
\end{lemma}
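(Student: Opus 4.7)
The plan is to reduce the lemma to Lemma \ref{le:ext2side} by applying it to the given complex $C$. Since $T$ is a two-sided tilting complex for $(A,B)$, the restriction $\Res_A T$ is a tilting complex for $A$, and since $C\simeq T$ in $D(A)$, so is $\Res_A C$. Thus Lemma \ref{le:ext2side}, applied to $C\in\Comp(A\rtimes G)$, produces a complex $X$ of $\bigl((A\otimes B'^\opp)\rtimes G\bigr)$-modules, where $B'=\End_{D(A)}(C)$, and an isomorphism $\phi:C\iso X$ in $D(A\rtimes G)$ such that $\Res_{A\otimes B'^\opp}(X)$ is a two-sided tilting complex for $(A,B')$ and the natural algebra map $B'\to\End_{D(A)}(X)$ coming from the right $B'^\opp$-action on $X$ is induced by $\phi$.

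Next I would identify $B$ with $B'^\opp$ as follows. The right $B$-action on $T$ gives an isomorphism of algebras $B^\opp\iso\End_{D(A)}(T)$, since $T$ is a two-sided tilting complex for $(A,B)$. Composing with the isomorphism $\End_{D(A)}(T)\iso\End_{D(A)}(C)=B'$ induced by $T\simeq C$ in $D(A)$ yields an isomorphism of algebras $\sigma:B^\opp\iso B'$. Transporting along $\sigma$ turns the $G$-action on $B'$ into a $G$-action on $B$, and turns $X$ into a complex of $\bigl((A\otimes B^\opp)\rtimes G\bigr)$-modules.

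It remains to check that, with this bimodule structure, $X\simeq T$ in $D(A\otimes B^\opp)$. Both $X$ and $T$ are now two-sided tilting complexes for $(A,B)$, they are isomorphic in $D(A)$ via $T\simeq C\simeq X$, and by the construction of $\sigma$ the two resulting identifications $B^\opp\iso\End_{D(A)}(-)$ intertwine this isomorphism. The desired conclusion then follows from the standard uniqueness statement for two-sided tilting complexes: once the underlying one-sided tilting complex and the induced map from $B^\opp$ to its derived endomorphism ring are fixed, the $(A,B)$-bimodule structure is determined up to isomorphism in $D(A\otimes B^\opp)$. This last compatibility check is the only delicate point; it is exactly the one-sided-determines-two-sided principle emphasized in the introduction, and in the present setup it is built into the conclusion of Lemma \ref{le:ext2side}.
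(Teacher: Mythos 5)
Your proposal is correct and follows essentially the same route as the paper: apply Lemma \ref{le:ext2side} to $C$, identify $B$ with $\End_{D(A)}(C)$ via the isomorphism $T\simeq C$ in $D(A)$ and the canonical map coming from the right $B$-action on $T$, and then invoke the rigidity of two-sided tilting complexes — your ``standard uniqueness statement'' is precisely the paper's Lemma \ref{le:uptohomotopy}. You are merely more explicit about transporting the $G$-action along the identification of $B$ with the derived endomorphism algebra (up to the $B$ versus $B^\opp$ convention), a step the paper leaves implicit.
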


\begin{proof}
Consider $X$ as in Lemma \ref{le:ext2side}.
Then $\Res_{A\otimes B^\opp}(X)$ and
$T$ are two-sided tilting complexes and there is an isomorphism
$X\iso T$ in $D(A)$ compatible with the canonical maps
$B\to \End_{D(A)}(X)$ and $B\to \End_{D(A)}(T)$. This forces
$X$ and $T$ to be isomorphic in $D(A\otimes B^\opp)$
(Lemma \ref{le:uptohomotopy}).
\end{proof}

\begin{lemma}
\label{le:uptohomotopy}
Let $C,D\in\Comp(A\otimes B^\opp)$ such that
$C$ is a two-sided tilting complex. Assume that there is an isomorphism
$\phi:C\iso D$ in $D(A)$ such that 
the composition of canonical maps
$\End_{D(A)}(C)=B\to\End_{\Comp(A)}(X)\to \End_{D(A)}(X)$ is given by $\phi$. There is an isomorphism $C\iso D$ in $D(A\otimes B^\opp)$ restricting to $\phi$.
\end{lemma}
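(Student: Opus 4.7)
The plan is to exploit that $C$, being a two-sided tilting complex for $(A,B)$, upgrades to an equivalence $C\otimes_B^\BL -:D(B^\en)\iso D(A\otimes B^\opp)$ with quasi-inverse $R\Hom^\bullet_A(C,-)$. So it suffices to produce an isomorphism in $D(B^\en)$ between the images $R\Hom^\bullet_A(C,C)$ and $R\Hom^\bullet_A(C,D)$ that recovers $\phi$ upon applying $C\otimes_B^\BL -$; combined with the canonical identification $B\iso R\Hom^\bullet_A(C,C)$ in $D(B^\en)$, the task reduces to producing an isomorphism $B\iso R\Hom^\bullet_A(C,D)$ in $D(B^\en)$.

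My first move is to introduce $\psi=\phi\circ -$ as the candidate, defined on a suitable resolution of $C$ so that the derived Hom complexes are computed on the nose. Since $\phi$ is an isomorphism in $D(A)$, $\psi$ is an isomorphism in $D(B)$, and it is visibly left $B$-linear: both left $B$-actions are induced from the right $B$-structure on the source $C$ and are untouched by post-composition with $\phi$.

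The key step, and the main obstacle, is to promote $\psi$ to a morphism in $D(B^\en)$, i.e., to verify right $B$-linearity. Writing $b_C$ and $b_D$ for right multiplication by $b\in B$ on $C$ and $D$, one computes $\psi(f\cdot b)=\phi\circ b_C\circ f$ while $\psi(f)\cdot b=b_D\circ\phi\circ f$; hence right $B$-linearity in the derived category reduces to the identity $\phi\circ b_C=b_D\circ\phi$ in $\Hom_{D(A)}(C,D)$ for every $b\in B$. This is precisely what the hypothesis grants: the map $B\to\End_{D(A)}(D)$ induced from $B=\End_{D(A)}(C)$ by conjugation with $\phi$ agrees with the map $b\mapsto b_D$ coming from the right $B^\opp$-action on $D$.

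Finally, I will apply the equivalence $C\otimes_B^\BL -$ to the isomorphism $B\iso R\Hom^\bullet_A(C,D)$ obtained in $D(B^\en)$, and invoke the canonical isomorphisms $C\otimes_B^\BL B\simeq C$ and $C\otimes_B^\BL R\Hom^\bullet_A(C,D)\simeq D$ in $D(A\otimes B^\opp)$. The resulting isomorphism $C\iso D$ in $D(A\otimes B^\opp)$ restricts to $\phi$ in $D(A)$ by naturality of the counit of the adjunction. The delicate point throughout is the bookkeeping around the two commuting $B$-actions on $R\Hom^\bullet_A(C,D)$, but the hypothesis was tailored precisely to ensure their compatibility.
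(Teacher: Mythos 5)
Your proposal is correct and follows essentially the same route as the paper: you reduce to producing an isomorphism $B\iso R\Hom_A^\bullet(C,D)$ in $D(B^\en)$ via the map induced by $\phi$, observe that one $B$-linearity is formal while the other is exactly the compatibility hypothesis, and then apply $C\otimes_B^\BL-$. The only difference is that you spell out the bimodule bookkeeping and the ``restricting to $\phi$'' claim more explicitly than the paper does.
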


\begin{proof}
The map $\phi$ induces isomorphisms of $B^\en$-modules
$\Hom_{D(A)}(C,C[i])\iso \Hom_{D(A)}(C,D[i])$ for $i\in\BZ$. 
The canonical map $B\to R\End_A^\bullet(C)$ is an isomorphism
in $D(B^\en)$ and we obtain an isomorphism
$B\iso R\Hom_A^\bullet(C,D)$ in $D(B^\en)$. Applying
$C\otimes_B^\BL -$ gives the required isomorphism.
\end{proof}

\subsubsection{Equivariant lifts of stable equivalences}
We assume that $k$ is a field for the rest of \S \ref{se:auto}.

\begin{prop}
\label{pr:equivariantstable}
Let $A$ and $B$ be two symmetric $k$-algebras with no simple direct factors
and endowed with an action of a
finite group $G$. Let $M$ be a bounded
complex of $\bigl((A\otimes B^\opp)\rtimes G\bigr)$-modules
such that $\Res_{A\otimes B^\opp}M$ induces a stable equivalence.
Assume that there is a two-sided tilting complex $T$ for $A\otimes B^\opp$ that
is isomorphic to $\Res_{A\otimes B^\opp}(M)$ in $(A\otimes B^\opp)\mstab$,
and such that $\Res_A(T)$ extends to a complex of $(A\rtimes G)$-modules.

There is a bounded complex $C$ of $\bigl((A\otimes B^\opp)\rtimes G\bigr)$-modules
such that $\Res_{A\otimes B^\opp}C\simeq T$ in $D(A\otimes B^\opp)$.
%and $C\simeq M$ in $((A\otimes B^\opp)\rtimes G)\mstab$
\end{prop}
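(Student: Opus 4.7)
The plan is to apply Lemma~\ref{le:ext2side} to the hypothesised $(A\rtimes G)$-lift of $\Res_A T$, and then to adjust the resulting extension using the data of $M$ so that the induced $G$-action on $B$ coincides with the given one.

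First I would let $\tilde T$ be an $(A\rtimes G)$-complex with $\Res_A\tilde T=\Res_A T$; since $T$ is a two-sided tilting complex for $(A,B)$, $\Res_A\tilde T$ is a tilting complex for $A$. Applying Lemma~\ref{le:ext2side} to $\tilde T$ produces a complex $X$ of $\bigl((A\otimes B^\opp)\rtimes G\bigr)$-modules, together with an isomorphism $\tilde T\iso X$ in $D(A\rtimes G)$, such that $\Res_{A\otimes B^\opp}(X)$ is a two-sided tilting complex. Combining this with Lemma~\ref{le:uptohomotopy} one obtains an isomorphism $X\simeq T$ in $D(A\otimes B^\opp)$. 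The $G$-action $\psi$ on $B$ inherited by $X$ from $\tilde T$ need not coincide with the given action $\phi$, and the remainder of the argument is devoted to reconciling the two.

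The set of isomorphism classes of $(A\rtimes G)$-extensions of $\Res_A T$ is a torsor under $Z^1(G,B^\times)$ (with $G$ acting through $\psi$), and replacing $\tilde T$ by its twist by a cocycle $c$ conjugates the induced action on $B$ by $c$. The stable equivariance provided by $M$ supplies the required cocycle: the isomorphism $T\simeq\Res_{A\otimes B^\opp}(M)$ in $(A\otimes B^\opp)\mstab$, together with the $G$-structure on $M$ which is compatible with $\phi$, endows $T$ with a $G$-equivariant structure in the stable category that is compatible with $\phi$. Comparing its restriction to $A\mstab$ with the derived structure $\tilde T$ produces, for each $g\in G$, a stable automorphism of $\Res_A T$ which, since $A$ has no simple direct factors and $\Res_A T$ is a tilting complex, corresponds canonically to a unit of $B$; the resulting assignment $g\mapsto c(g)$ is a $1$-cocycle. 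Twisting $\tilde T$ by $c$ and reapplying Lemma~\ref{le:ext2side} then yields the desired complex $C$ with action $\phi$ and $\Res_{A\otimes B^\opp}(C)\simeq T$ in $D(A\otimes B^\opp)$.

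The main obstacle I expect is verifying that the comparison of the stable $G$-structure on $T$ coming from $M$ with the derived $G$-structure coming from $\tilde T$ genuinely produces a cocycle in $Z^1(G,B^\times)$, and that twisting $\tilde T$ by this cocycle survives to an actual $(A\rtimes G)$-complex realising $\phi$. Both steps lean crucially on the hypothesis that $A$ and $B$ have no simple direct factors, so that stable and derived endomorphisms of the tilting complex $T$ may be identified without ambiguity and the cocycle extracted cleanly.
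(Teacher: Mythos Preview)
Your overall strategy matches the paper's: apply Lemma~\ref{le:lefttocentre} (equivalently your use of Lemma~\ref{le:ext2side}) to obtain a complex $X$ of $\bigl((A\otimes B^\opp)\rtimes_{\rho,\psi'}G\bigr)$-modules with $\Res_{A\otimes B^\opp}X\simeq T$, for some a priori different $G$-action $\psi'$ on $B$, and then use $M$ to pass from $\psi'$ to the given action $\psi$. The divergence is in how this last passage is carried out, and your version has a genuine gap.

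You propose to compare the ``stable $G$-structure on $T$ coming from $M$'' with the ``derived $G$-structure $\tilde T$'' by restricting to $A\mstab$ and reading off, for each $g$, a stable automorphism of $\Res_A T$, which you then identify with a unit of $B$. But $\Res_A T$ is a tilting complex for $A$, hence perfect, hence \emph{zero} in $A\mstab$. So $\End_{A\mstab}(\Res_A T)=0$ and no cocycle can be extracted this way; the claim that ``stable and derived endomorphisms of the tilting complex $T$ may be identified'' is simply false. The hypothesis that $A$ and $B$ have no simple direct factors does not rescue this --- it is used elsewhere (for Linckelmann's criterion), not to compare stable and derived endomorphism rings of perfect objects.

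The paper avoids this by staying at the $(A\otimes B^\opp)$-level, where nothing collapses. One chooses an actual $\bigl((A\otimes B^\opp)\rtimes_{\rho,\psi'}G\bigr)$-module $N$ isomorphic to $X$ in the stable category; then $\Res_{A\otimes B^\opp}N\simeq\Res_{A\otimes B^\opp}M$ in $(A\otimes B^\opp)\mstab$, and neither is zero there since each induces a stable equivalence. Now $\Hom_k(N,M)$ carries a $\bigl((A\otimes B^\opp)\rtimes_{\rho,\psi}G,(A\otimes B^\opp)\rtimes_{\rho,\psi'}G\bigr)$-bimodule structure, which restricts to a $(B^\en\rtimes_{\psi,\psi'}G)$-structure on $\Hom_A(N,M)\simeq B\oplus R$ with $R$ projective. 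The summand $B$ therefore extends to a $(B^\en\rtimes_{\psi,\psi'}G)$-module $L$, and $C=\Hom^\bullet_{B^\opp}(L,X)$ is the desired complex of $\bigl((A\otimes B^\opp)\rtimes_{\rho,\psi}G\bigr)$-modules. This bimodule $L$ is precisely the object encoding the intertwiner between $\psi$ and $\psi'$ that your cocycle was meant to supply, but it is produced directly rather than via a comparison in $A\mstab$ that cannot be made.
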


\begin{proof}
Let $\rho:G\to\Aut(A)$ and $\psi:G\to\Aut(B)$ be the canonical homomorphisms.
Lemma \ref{le:lefttocentre} shows that there is a homomorphism
$\psi':G\to\Aut(B)$ and a complex $X$ of $\bigl((A\otimes B^\opp)
\rtimes_{\rho,\psi'}G\bigr)$-modules such that $\Res_{A\otimes B^\opp}(X)$ is a
two-sided tilting complex and $X\simeq T$ in $D(A\otimes B^\opp)$.

Let $N$ be an $\bigl((A\otimes B^\opp)
\rtimes_{\rho,\psi'}G\bigr)$-module that is isomorphic to $X$ in
$\bigl((A\otimes B^\opp)\rtimes_{\rho,\psi'}G\bigr)\mstab$. Then
$\Res_{A\otimes B^\opp}N\simeq \Res_{A\otimes B^\opp}M$ in
$(A\otimes B^\opp)\mstab$. We have the structure of a
$\bigl((A\otimes B^\opp)\rtimes_{\rho,\psi} G,
(A\otimes B^\opp)\rtimes_{\rho,\psi'} G\bigr)$-bimodule on 
$\Hom_k(N,M)$, and this restricts to a
a structure of $(B^\en\rtimes_{\psi,\psi'} G)$-module on $\Hom_A(N,M)$.
We have $\Hom_A(N,M)\simeq B\oplus R$ as $B^\en$-modules, where $R$
is a projective $B^\en$-module.
We deduce that the $B^\en$-module $B$ extends to a 
$(B^\en\rtimes_{\psi,\psi'} G)$-module $L$. Let
$C=\Hom^\bullet_{B^{\opp}}(L,X)$: this is a complex of
$\bigl((A\otimes B^\opp)\rtimes_{\rho,\psi}G\bigr)$-modules satisfying the 
required property.
\end{proof}

We have a descent counterpart.

\begin{prop}
\label{pr:descentstable}
Let $A$ and $B$ be two symmetric $k$-algebras with no simple direct factors
and endowed with an action of a
finite group $G$ such that $|G|\in k^\times$. Let $M$ be a bounded
complex of $\bigl((A\otimes B^\opp)\rtimes G\bigr)$-modules
such that $\Res_{A\otimes B^\opp}M$ induces a stable equivalence.
Assume that there is a two-sided tilting complex $T$ for 
$\bigl((A\rtimes G),(B\rtimes G)\bigr)$ that
is isomorphic to $\Ind^{(A\rtimes G)\otimes (B\rtimes G)^\opp}(M)$ in
$\bigl((A\rtimes G)\otimes (B\rtimes G)^\opp\bigr)\mstab$,
and such that there is an isomorphism $\Res_{A\rtimes G}T\simeq C\otimes kG$ in
$D(A\rtimes G)$, where $C\in\Comp(A\rtimes G)$.

There is a bounded complex $Y$ of $\bigl((A\otimes B^\opp)\rtimes G\bigr)$-modules
such that $\Ind^{(A\rtimes G)\otimes (B\rtimes G)^\opp}(Y)\simeq T$ in
$D\bigl((A\rtimes G)\otimes (B\rtimes G)^\opp\bigr)$.
\end{prop}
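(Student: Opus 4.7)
The strategy is to mirror the proof of Proposition \ref{pr:equivariantstable} in the dual, descent direction. The hypothesis $\Res_{A\rtimes G}T\simeq C\otimes kG$ in $D(A\rtimes G)$ is precisely the structure that restriction produces from an induced object: for any $\bigl((A\otimes B^\opp)\rtimes G\bigr)$-complex $Y$, a Mackey-formula computation gives $\Res_{A\rtimes G}\Ind^{(A\rtimes G)\otimes (B\rtimes G)^\opp}Y\simeq (Y|_{A\rtimes G})\otimes kG$ as $(A\rtimes G)$-complexes, since $|G\times G^\opp:\Delta G|=|G|$. So the hypothesis records exactly the structure needed to make descent possible.

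First I would prove a descent analog of Lemma \ref{le:lefttocentre}: from $T$ together with the assumed structure on $\Res_{A\rtimes G}T$, produce a possibly twisted action $\psi':G\to \Aut(B)$ and a complex $X$ of $\bigl((A\otimes B^\opp)\rtimes_{\rho,\psi'}G\bigr)$-modules with $\Ind^{(A\rtimes G)\otimes (B\rtimes_{\psi'}G)^\opp}X\simeq T$ in $D\bigl((A\rtimes G)\otimes (B\rtimes_{\psi'}G)^\opp\bigr)$. The assumption $|G|\in k^\times$ is essential here: it makes the index $|G\times G^\opp:\Delta G|$ invertible in $k$, so that induction from $\bigl((A\otimes B^\opp)\rtimes G\bigr)$-modules to $\bigl((A\rtimes G)\otimes (B\rtimes G)^\opp\bigr)$-modules is fully faithful onto a subcategory cut out precisely by the existence of a compatible structure on the $(A\rtimes G)$-restriction, and Maschke-type averaging produces the required $X$ at the possible cost of twisting the right $B$-action.

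Second, following closely the end of the proof of Proposition \ref{pr:equivariantstable}, I would use $M$ to correct the twist $\psi'$ back to the original $\psi$. Pick $N$ an $\bigl((A\otimes B^\opp)\rtimes_{\rho,\psi'}G\bigr)$-module stably isomorphic to $X$; then $\Res_{A\otimes B^\opp}N\simeq \Res_{A\otimes B^\opp}M$ in $(A\otimes B^\opp)\mstab$. The $\bigl((A\otimes B^\opp)\rtimes_{\rho,\psi}G,\ (A\otimes B^\opp)\rtimes_{\rho,\psi'}G\bigr)$-bimodule $\Hom_k(N,M)$ restricts to a $(B^\en\rtimes_{\psi,\psi'}G)$-module structure on $\Hom_A(N,M)\simeq B\oplus R$, with $R$ projective over $B^\en$, yielding an extension $L$ of the $B^\en$-module $B$ to the semidirect product. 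Setting $Y=\Hom^\bullet_{B^\opp}(L,X)$ gives a complex of $\bigl((A\otimes B^\opp)\rtimes_{\rho,\psi}G\bigr)$-modules, and checking $\Ind^{(A\rtimes G)\otimes (B\rtimes G)^\opp}Y\simeq T$ reduces to a standard bimodule computation using the invertibility of $L$. The main obstacle is the first step: formulating and proving the descent analog of Lemma \ref{le:lefttocentre}, tracking how the left-side hypothesis $\Res_{A\rtimes G}T\simeq C\otimes kG$ interacts with the right $(B\rtimes G)^\opp$-action while exploiting the semisimplicity provided by $|G|\in k^\times$.
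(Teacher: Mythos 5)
Your first step is a genuine gap. You assert that induction from $\bigl((A\otimes B^\opp)\rtimes G\bigr)$-modules to $\bigl((A\rtimes G)\otimes (B\rtimes G)^\opp\bigr)$-modules is ``fully faithful onto a subcategory cut out precisely by the existence of a compatible structure on the $(A\rtimes G)$-restriction'', and that Maschke averaging then descends $T$ (up to a twist of the right action). Neither claim is justified, and the full-faithfulness claim is false: for $Y,Y'\in\Comp^b\bigl((A\otimes B^\opp)\rtimes G\bigr)$, the Mackey formula gives $\Hom(\Ind Y,\Ind Y')\simeq \Hom\bigl(Y,\Res\Ind Y'\bigr)\simeq\bigoplus_{g\in G}\Hom(Y,{}^{(1,g)}Y')$, so induction is faithful but not full, and the condition $\Res_{A\rtimes G}T\simeq C\otimes kG$ is only the image under the \emph{left-sided} restriction of a necessary condition; it does not by itself produce a descent of the two-sided object $T$. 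Since you yourself flag this step as ``the main obstacle'' and offer no argument for it, the proof is incomplete exactly where the real work lies. (Your second step, correcting a twist via $\Hom_A(N,M)\simeq B\oplus R$ and Linckelmann's theorem, is sound in outline, but it has nothing to act on without the first step.)

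The paper avoids descending $T$ directly. From $\Res_{A\rtimes G}T\simeq C\otimes kG$ and Lemma \ref{le:criteriontilting} it deduces only that $\Res_A C$ is a (one-sided) tilting complex for $A$; it then sets $B'=\End_{D(A)}(C)$ and uses Lemma \ref{le:ext2side} (Keller's construction) to build a complex $X$ of $\bigl((A\otimes B^{\prime\opp})\rtimes G\bigr)$-modules whose restriction is a two-sided tilting complex. Only at this point does the stable equivalence $M$ enter: $\Hom_A(M,N)=L\oplus R$ with $L$ a $\bigl((B\otimes B^{\prime\opp})\rtimes G\bigr)$-module inducing a stable equivalence taking simples to simples, whence by \cite[Theorem 2.1]{Li2} a Morita equivalence and then a $G$-invariant isomorphism $\sigma:B\iso B'$; the answer is $Y=\sigma^*X$. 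So the ``descent analog of Lemma \ref{le:lefttocentre}'' you are missing is not proved but circumvented: the two-sided structure is rebuilt from scratch on the $A$-side and the target algebra is identified with $B$ afterwards. If you want to complete your argument, this is the substitute you need for your first step.
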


\begin{proof}
Lemma \ref{le:criteriontilting} shows that $C$ is a tilting complex for $A$.
Let $B'=\End_{D(A)}(C)$.
Let $X\in\Comp\bigl((A\otimes B^{\prime\opp})\rtimes G\bigr)$ as in
Lemma \ref{le:ext2side}.
We have an isomorphism
$C\iso X$ in $D(A\rtimes G)$, whose restriction to $D(A)$ is compatible
with the action of $B'$ up to homotopy.
Let $N$ be an $\bigl((A\otimes B')\rtimes G\bigr)$-module isomorphic to $X$ in 
$(A\otimes B')\rtimes G)\mstab$. We have
$\Hom_A(M,N)=L\oplus R$ as $(B,B')$-bimodules, where $L$ has
no projective direct summand. Then $L$ extends to a
$((B\otimes B^{\prime\opp})\rtimes G)$-module. The
$(B\rtimes G,B'\rtimes G)$-bimodule $L'=\Ind^{(B\rtimes G)\otimes
(B'\rtimes G)^\opp}_{(B\otimes B^{\prime\opp})\rtimes G}(L)$ induces
a stable equivalence that sends simple modules to simple modules. Hence,
$L\otimes_B-$ sends simple modules to semi-simple modules. Since it
induces a stable equivalence, it sends simple modules to simple
modules, and we deduce that $L\otimes_{B'}-:B'\mMod\to B\mMod$ is
an equivalence \cite[Theorem 2.1]{Li2} and
the equivalence $L'\otimes_{B'\rtimes G}:(B'\times G)\mMod\iso
(B\rtimes G)\mMod$ comes from an isomorphism $B\rtimes G\iso B'\rtimes G$.
We deduce that there is a $G$-invariant isomorphism $\sigma:B\iso B'$ 
such that $B_{\sigma}\iso L$ as $(B\otimes B')\rtimes G$-modules. The 
complex $Y=\sigma^*X$ satisfies the required property.
\end{proof}

\subsubsection{Okuyama's sequential lifts}
\label{se:Okuyama}
Let $A$ and $B$ be two symmetric $k$-algebras with no simple direct
factors and acted on by a finite group
$G$. Assume that the simple $A$ and $B$-modules are absolutely simple.
Let $M$ an $\bigl((A\otimes B^\opp)\rtimes G\bigr)$-module, projective
as an $A$-module and as a $B^\opp$-module. We assume that $M$ induces a
stable equivalence between $A$ and $B$ and it has no non-zero projective
direct summand.

Let $\phi:P\to M$ be a projective cover of $M$. We have
$\Res_{A\otimes B^\opp} P\simeq \bigoplus_{S\in\CS_B}
P_{M\otimes_B S}\otimes P_S^*$ (cf.\ \cite[Lemma 2.12]{Rou2} and
Lemma \ref{le:extensionprojectivecovers} below).

Let $I$ be a $G$-invariant subset of $\CS_B$. Then, there is
a direct summand $Q$ of $P$ such that $\Res_{A\otimes B^\opp} Q\simeq 
\bigoplus_{S\in I} P_{M\otimes_B S}\otimes P_S^*$.
Let $C=C(M,I)=0\to Q\xrightarrow{\phi_{|Q}} M\to 0$, a complex
of $((A\otimes B^\opp)\rtimes G)$-modules with $M$ in degree $0$.

Assume that $\Res_A(C)$ is a tilting complex and let $B'=\End_{D(A)}(C)$.
Lemma \ref{le:ext2side} provides a complex $X$ of 
$\bigl((A\otimes B^{\prime\opp})\rtimes G\bigr)$-modules such that
$\Res_{A\otimes B^{\prime\opp}}(X)$ is a two-sided tilting complex.

\medskip
We consider a situation studied (in the non-equivariant setting)
by Okuyama \cite{Ok2}.
Let $I_0,\ldots,I_l$ be $G$-invariant subsets of $\CS_B$. We assume that
the sequence satisfies the following conditions.
Let $A_0=A$ and $M_0=M$. Assume that a $G$-algebra $A_i$ and an
$\bigl((A_i\otimes B^\opp)\rtimes G\bigr)$-module $M_i$ have been defined.
We assume that $\Res_{A_i}C(M_i,I_i)$ is a tilting complex. We set
$A_{i+1}=\End_{D(A_i)}\bigl(C(M_i,I_i)\bigr)$. We view $A_{i+1}$ as
an $\bigl((A_{i+1}\otimes B)\rtimes G\bigr)$-module by restricting the
canonical $(A_{i+1}^\en\rtimes G)$-module structure via the canonical map
$B\to \End_{D(A_i)}\bigl(C(M_i,I_i)\bigr)$, and we denote by
$M_{i+1}$ a maximal direct summand of $A_{i+1}$ with no non-zero
projective direct summand. Note that 
$\Res_{A_{i+1}\otimes B^\opp}(M_{i+1})$ induces a stable equivalence.

Let us assume finally that $B=A_{l+1}$ (as algebras, without $G$-action).
We have a sequence of derived equivalences
$$D^b(A)\iso D^b(A_1)\iso\cdots\iso D^b(A_l)\iso D^b(B),$$
whose composition
lifts the stable equivalence induced by $\Hom_A(M,-)$. It follows
from Proposition \ref{pr:equivariantstable} that there is a complex
of $\bigl((A\otimes B^\opp)\rtimes G\bigr)$-modules whose restriction to
$A\otimes B^\opp$ is a two-sided tilting complex.

\subsubsection{Perverse equivalences}
\label{se:extensionperverse}
Let $A$ be a symmetric $k$-algebra and
$G$ a finite group of automorphisms of $A$ with $|G|\in k^\times$.

Define an equivalence relation on $\CS_{A\rtimes G}$ by
$M\sim N$ if $\Hom_A(\Res_A M,\Res_A N)\not=0$.
Induction and restriction define a bijection 
$\CS_A/G \iso \CS_{A\rtimes G}$. 

The perversity datum
$(q,\CS_\bullet)$ for $A\rtimes G$
is said to be {\em $G$-compatible} if it is compatible with $\sim$.

\begin{prop}
\label{pr:perverseextension}
Let $(q_1,\CS^1_\bullet),\ldots,(q_d,\CS^d_\bullet)$ be a family of
$G$-invariant
perversity data for $A$ and let
$(q'_1,\CS^{\prime 1}_\bullet),\ldots,(q'_d,\CS^{\prime d}_\bullet)$ be
the corresponding $G$-compatible perversity data for $A\rtimes G$. There are algebras $B_0=A,\ldots,B_d$ endowed with a $G$-action
and complexes $X_i$ of $\bigl((B_{i-1}\otimes B_i^\opp)\rtimes G\bigr)$-modules
such that 
\begin{itemize}
\item $\Res_{B_{i-1}\otimes B_i^\opp}(X_i)$ induces a
perverse equivalence $D^b(B_i)\iso D^b(B_{i-1})$ relative to
$(q_i,\CS^i_\bullet)$, and
\item $\Ind^{(B_{i-1}\rtimes G)\otimes (B_i\rtimes G)^\opp}(X_i)$ induces a
perverse equivalence relative to $(q'_i,\CS^{\prime i}_\bullet)$.
\end{itemize}
\end{prop}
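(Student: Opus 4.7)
The plan is to proceed by induction on $d$. The base case $d=0$ is trivial with $B_0=A$. For the inductive step, assume $B_{i-1}$ has been constructed as a symmetric $k$-algebra with $G$-action; we must produce $B_i$ with a compatible $G$-action and a complex $X_i$ of $\bigl((B_{i-1}\otimes B_i^\opp)\rtimes G\bigr)$-modules satisfying both perversity conditions simultaneously.

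The first task is to make the construction of \S\ref{se:increasingperversity} $G$-equivariant. Choose one representative per $G$-orbit on $\CS_{B_{i-1}}$, pick an injective hull $I_S$ for each representative $S$, and transport by the $G$-action to obtain injective hulls of all simples; then $\bigoplus_{S\in\CS_{B_{i-1}}} I_S$ carries a natural $(B_{i-1}\rtimes G)$-module structure. Because the filtration $\CS^i_\bullet$ is $G$-invariant, the subsets $E$ entering the inductive construction of each $C_S$ are $G$-invariant, and the canonical submodules $S^E$, the iterated kernels and cokernels of maps $I_{T^E}\twoheadrightarrow\Omega^{-1}(T^E)\hookrightarrow I_{\Omega^{-1}(T^E)}$, and subsequent injective hulls all respect the $G$-action. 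Thus the complex $C=\bigoplus_S C_S$ acquires a natural structure of a complex of $(B_{i-1}\rtimes G)$-modules, and $\Res_{B_{i-1}}(C)$ is the tilting complex inducing the standard perverse equivalence for $(q_i,\CS^i_\bullet)$.

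Apply Lemma \ref{le:criteriontilting}: since $\Res_{B_{i-1}}(C)$ is a tilting complex, $kG\otimes C$ is a tilting complex for $B_{i-1}\rtimes G$, and the canonical isomorphism $\End_{D(B_{i-1}\rtimes G)}(kG\otimes C)\iso \End_{D(B_{i-1})}(C)\rtimes G$ equips $B_i:=\End_{D(B_{i-1})}(C)$ with a $G$-action. Lemma \ref{le:ext2side} then provides a complex $X_i\in\Comp\bigl((B_{i-1}\otimes B_i^\opp)\rtimes G\bigr)$ whose restriction to $B_{i-1}\otimes B_i^\opp$ is a two-sided tilting complex isomorphic to $C$ in $D(B_{i-1})$, compatibly with the action of $B_i$. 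By construction this restricted complex realizes the perverse equivalence for $(q_i,\CS^i_\bullet)$.

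It remains to check that $Y_i:=\Ind^{(B_{i-1}\rtimes G)\otimes (B_i\rtimes G)^\opp}(X_i)$ induces the perverse equivalence for $(q'_i,\CS^{\prime i}_\bullet)$. Given a simple $(B_i\rtimes G)$-module $T\in\CS^{\prime i}_j$, Clifford theory identifies $\Res_{B_i}T$ with a multiple of the sum of simples in a single $G$-orbit contained in $\CS^i_j$. A Mackey-type identification yields a canonical isomorphism $\Res_{B_{i-1}}\bigl(Y_i\otimes^\BL_{B_i\rtimes G}T\bigr)\iso \Res_{B_{i-1}}(X_i)\otimes^\BL_{B_i}\Res_{B_i}T$, so the perverse property of $X_i$ on the $B_i$ side controls the $B_{i-1}$-composition factors of $H^{-m}(Y_i\otimes^\BL_{B_i\rtimes G}T)$: they lie in $\CS^i_{j-1}$ for $m\neq q_i(j)$ and in $\CS^i_j$ for $m=q_i(j)$. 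Since $G$-orbits in $\CS^i_\bullet$ correspond under $\sim$ to elements of $\CS^{\prime i}_\bullet$, the $(B_{i-1}\rtimes G)$-composition factors land in $\CS^{\prime i}_{j-1}$ and $\CS^{\prime i}_j$ respectively. The main obstacle is the equivariance of the construction in the first step: one must check that the inductive recipe in \S\ref{se:increasingperversity}, a priori only canonical up to choices of injective hulls and splittings, can be coherently $G$-equivariant. This relies on the $G$-invariance of $(q_i,\CS^i_\bullet)$ and on $|G|\in k^\times$, which allows any exact sequence of $(B_{i-1}\rtimes G)$-modules split over $B_{i-1}$ to be split equivariantly.
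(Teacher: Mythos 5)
There is a genuine gap at the heart of your construction of $B_i$. The complexes $C_S$ of \S\ref{se:increasingperversity} are the images of the \emph{simple} $B_i$-modules under the perverse equivalence, not of the projective ones, and their direct sum $C=\bigoplus_S C_S$ is not a tilting complex: one has $\Hom_{D(B_{i-1})}(C_S,C_T[m])\simeq\Ext^m_{B_i}(S',T')$ for the corresponding simples $S',T'$, which is non-zero for $m>0$ in general. Consequently Lemma \ref{le:criteriontilting} does not apply to $C$, and $\End_{D(B_{i-1})}(C)$ is not the algebra $B_i$ you want (it is rather the degree-zero part of the Ext-algebra of the simples). Everything downstream of this point — the application of Lemma \ref{le:ext2side}, the identification of the two-sided complex — therefore rests on an object that does not exist as described. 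To repair this along your lines you would need an equivariant construction of a genuine one-sided tilting complex realizing a general increasing perverse equivalence, which is exactly what you have not supplied.

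The paper avoids this by a reduction you do not use: every perverse equivalence factors as a composition of \emph{elementary} perverse equivalences, associated to two-step filtrations with $q(1)-q(0)=\pm 1$. For such an elementary step the tilting complex is the explicit two-term complex of projectives $0\to P_V\oplus P_U\to A\to 0$ of \S\ref{se:elementary}; the submodules $U\subset A$ and $V\subset A/J(A)$ are canonical, hence $G$-stable, and Lemma \ref{le:extensionprojectivecovers} lets one extend their projective covers, hence the whole complex, to $(A\rtimes G)$-modules. Only then do Lemmas \ref{le:criteriontilting} and \ref{le:ext2side} apply. Your treatment of the second bullet (that the induced complex is perverse for the $G$-compatible datum) is essentially a re-derivation of Lemma \ref{le:indperverse} and is fine, as is the overall induction on $d$; the missing ingredient is the decomposition into elementary equivalences together with the explicit equivariant tilting complex for each.
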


\begin{proof}
Every perverse equivalence is a composition of perverse equivalences
associated to $2$-step filtrations with $q(1)-q(0)=\pm 1$. This
shows that it is enough to prove the proposition when the $(q_i,\CS^i_\bullet)$
satisfy that requirement. Next, it is enough to deal with an individual
$(q,\CS^i_\bullet)$. Shifting if necessary, it is enough to deal with
the case $q(0)=0$.

Consider an elementary equivalence as in \S \ref{se:elementary} .
The submodule $U$ of $A$ is $G$-stable and so is
the submodule $V$ of $A/J(A)$. 
It follows from Lemma \ref{le:extensionprojectivecovers} below that
the complex $X$ extends to a complex of $(A\rtimes G)$-modules.
Lemma \ref{le:ext2side} shows that there is a complex $C$ of
$\bigl((A\otimes B^\opp)\rtimes G\bigr)$-modules inducing a perverse
equivalence.

The case of a perverse equivalence associated to a two-step
filtration and $q$ given by $q(0)=0$, $q(1)=-1$ is handled similarly.

The last part of the lemma follows from \ref{le:indperverse} below.
\end{proof}

\begin{lemma}
\label{le:extensionprojectivecovers}
Let $M$ be an $(A\rtimes G)$-module with projective cover $P$. Then
$\Res_A P$ is a projective cover of $\Res_A M$.
\end{lemma}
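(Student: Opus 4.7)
The plan is to reduce the claim to a computation of $J(A\rtimes G)$ and to use the standard characterization of projective covers via superfluous kernels.

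First I would check that $\Res_A$ sends projectives to projectives. Since $A\rtimes G$ is a free $A$-module of rank $|G|$ (with basis $G$), we have $\Res_A(A\rtimes G)\simeq A^{|G|}$, so $\Res_A$ sends free, and hence projective, $(A\rtimes G)$-modules to projective $A$-modules. Thus $\Res_A P$ is $A$-projective, and the map $\Res_A P\to \Res_A M$ is still surjective.

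Second, I would identify the Jacobson radical of $A\rtimes G$. The ideal $J(A)$ is preserved by every automorphism of $A$, so $J(A)(A\rtimes G)=\bigoplus_{g\in G}J(A)g$ is a two-sided ideal of $A\rtimes G$. The quotient is $(A/J(A))\rtimes G$: it is a skew group algebra of a semi-simple $k$-algebra by a group of order invertible in $k$, hence semi-simple. This shows $J(A\rtimes G)\subseteq J(A)(A\rtimes G)$. The reverse inclusion is the observation recalled in \S\ref{se:extmodules} that $J(A)G\subseteq J(A\rtimes G)$, so we get
\[J(A\rtimes G)=J(A)(A\rtimes G).\]

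Third, for any $(A\rtimes G)$-module $N$,
\[J(A\rtimes G)N=J(A)(A\rtimes G)N=J(A)N=J(A)\Res_A N,\]
the middle equality because $N$ is already an $(A\rtimes G)$-module so $(A\rtimes G)N=N$. Applying this to $P$ and to $M$, I conclude that the canonical map $\Res_A P/J(A)\Res_A P\to \Res_A M/J(A)\Res_A M$ coincides with the head map $P/J(A\rtimes G)P\to M/J(A\rtimes G)M$, which is an isomorphism since $P\twoheadrightarrow M$ is a projective cover. Equivalently, the kernel of $P\to M$ is contained in $J(A\rtimes G)P=J(A)\Res_A P$, which is the defining property of a projective cover in $A\mMod$ (the modules being finitely generated over the artinian algebra $A$).

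There is no real obstacle beyond correctly identifying $J(A\rtimes G)$; the hypothesis $|G|\in k^\times$ is used only to ensure $(A/J(A))\rtimes G$ is semi-simple, and everything else is formal.
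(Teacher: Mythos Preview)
Your proof is correct. Both you and the paper start from the same key identity $J(A\rtimes G)=J(A)(A\rtimes G)$, obtained from Maschke's theorem for $(A/J(A))\rtimes G$ together with the inclusion $J(A)G\subset J(A\rtimes G)$ recalled in \S\ref{se:extmodules}.

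The difference is in how the conclusion is drawn. You observe directly that the kernel $N$ of $P\to M$ lies in $J(A\rtimes G)P=J(A)P$, which is exactly the superfluous-kernel condition for $\Res_A P\to\Res_A M$ to be a projective cover. The paper instead passes through socles: from $J(A')=J(A)A'$ it deduces $\soc(A)\subset\soc(A')$, then argues that $N$ has no projective $(A\rtimes G)$-summand, hence $\soc(A')N=0$, hence $\soc(A)N=0$, hence $\Res_A N$ has no projective $A$-summand; this last step, and the implicit converse ``no projective summand in the kernel $\Rightarrow$ projective cover'', both use that $A$ and $A\rtimes G$ are self-injective. Your route is shorter and does not invoke the symmetry of $A$ beyond what is needed for the radical computation.
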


\begin{proof}
Set $A'=A\rtimes G$. The $A$-module $A'/(A'J(A)A')$ is semi-simple, hence
it is semi-simple as an $A'$-module, so
$J(A')\subset A'J(A)A'=A'J(A)$ and $\soc(A)\subset \soc(A')$.
Let $N$ be the kernel of
a projective cover $P\to M$. Since $N$ has no non-zero projective
direct summand, we have $\soc(A')N=0$, hence $\soc(A)N=0$: this shows that
$\Res_A N$ has no non-zero projective direct summand.
\end{proof}

\begin{rem}
Note that if $G$ acts trivially
on $\CS_A$, then sequences of perversity data are automatically $G$-invariant.
\end{rem}

\begin{lemma}
\label{le:indperverse}
Let $B$ be a symmetric $k$-algebra endowed with an action of $G$.
Let $X\in\Comp^b((A\otimes B^\opp)\rtimes G)$. 
Let $(q,\CS_{A,\bullet},\CS_{B,\bullet})$ be a $G$-invariant perversity datum
and $(q',\CS_{A\rtimes G,\bullet},\CS_{B\rtimes G,\bullet})$ the
corresponding $G$-compatible datum.

Then
$\Res_{A\otimes B^\opp}X$ induces a perverse equivalence relative
to $(q,\CS_{A,\bullet},\CS_{B,\bullet})$ if and only if
$\Ind^{(A\rtimes G)\otimes (B\rtimes G)^\opp}(X)$ induces
a perverse equivalence relative to
$(q',\CS_{A\rtimes G,\bullet},\CS_{B\rtimes G,\bullet})$.
\end{lemma}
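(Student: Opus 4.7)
The plan is to reduce to the natural Mackey-style isomorphism
$$\Res_A\circ F_G\iso F\circ \Res_B,$$
where $F=X\otimes^\BL_B-$ and $F_G=\Ind^{(A\rtimes G)\otimes(B\rtimes G)^\opp}(X)\otimes^\BL_{B\rtimes G}-$; this is the direct analogue of the commutative diagram following Lemma~\ref{le:extensionequiv}. Since $J(A)G\subset J(A\rtimes G)$ and $|G|\in k^\times$, restriction to $A$ is exact and preserves semi-simplicity, and Clifford theory identifies the bijection $\CS_{A\rtimes G}/{\sim}\iso\CS_A/G$ with the map sending $[T']$ to the $G$-orbit of composition factors of $\Res_A T'$ (and analogously for $B$). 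The $G$-compatibility of the datum $(q',\CS_{A\rtimes G,\bullet},\CS_{B\rtimes G,\bullet})$ amounts precisely to saying that its filtrations correspond under these bijections to the (hypothesised $G$-invariant) filtrations $\CS_{A,\bullet}$ and $\CS_{B,\bullet}$.

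For the forward implication, I would take $T'\in\CS_{B\rtimes G,i}$, note that $\Res_B T'$ is semi-simple with composition factors in $\CS_{B,i}$, and apply the perversity of $F$ to each simple summand: this forces the composition factors of $H^{-j}(F(\Res_B T'))$ to lie in $\CS_{A,i-1}$ for $j\not=q(i)$ and in $\CS_{A,i}$ for $j=q(i)$. Transporting through $\Res_A\circ F_G\iso F\circ\Res_B$ and using that $\Res_A$ is exact, the same inclusion holds for $\Res_A H^{-j}(F_G(T'))$, and the dictionary above delivers the perverse condition on $F_G$ relative to $(q',\CS_{A\rtimes G,\bullet},\CS_{B\rtimes G,\bullet})$.

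For the converse, I would pick $S\in\CS_{B,i}$ and consider $\Ind_B^{B\rtimes G}S$, every simple summand of which lies in $\CS_{B\rtimes G,i}$ by $G$-compatibility. Applying the perversity of $F_G$ to those summands and restricting to $A$, the Mackey identity $\Res_B\Ind_B^{B\rtimes G}S\iso\bigoplus_{g\in G}{}^g S$ (with ${}^g S$ denoting the $g$-twist of $S$ as $B$-module) yields
$$\Res_A H^{-j}\bigl(F_G(\Ind_B^{B\rtimes G}S)\bigr)\iso\bigoplus_{g\in G}H^{-j}\bigl(F({}^g S)\bigr);$$
each summand on the right has composition factors in the prescribed $G$-invariant filtration subset of $\CS_A$, and the case $g=1$ recovers the perverse condition on $F$. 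The only real subtlety, rather than a genuine obstacle, is setting up the Clifford-theoretic dictionary carefully so that $G$-invariance of the filtrations on $\CS_A$ and $\CS_B$ matches $G$-compatibility of the filtrations on $\CS_{A\rtimes G}$ and $\CS_{B\rtimes G}$; once this is established the remainder of the argument is purely formal.
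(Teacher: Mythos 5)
Your argument is correct and is essentially the paper's own proof: the paper likewise reduces everything to the isomorphism $\Res_A\bigl(Y\otimes^\BL_{B\rtimes G}S'\bigr)\simeq X\otimes^\BL_B\Res_B(S')$ (where $Y$ is the induced complex), to the exactness and semisimplicity-preservation of restriction, and to the Clifford dictionary matching $G$-invariant data on $\CS_A,\CS_B$ with $G$-compatible data on $\CS_{A\rtimes G},\CS_{B\rtimes G}$. Two minor remarks: the fact that one side is an equivalence if and only if the other is should be quoted from Lemma \ref{le:equivalencetilting} rather than left implicit, and for the converse the paper's (unspelled) route is the dual of yours --- restrict a simple $(B\rtimes G)$-module whose restriction contains $S$, instead of inducing $S$ up --- though your induction argument via $\Res_A F_G(\Ind_B^{B\rtimes G}S)\iso\bigoplus_{g\in G}F({}^gS)$ is equally valid.
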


\begin{proof}
Let $Y=\Ind^{(A\rtimes G)\otimes (B\rtimes G)^\opp}(X)$.
The equivalence part is Lemma \ref{le:equivalencetilting}.
Let $S'\in\CS_{B\rtimes G}$ and $L=Y\otimes^{\BL}_{B\rtimes G}S'$. We  have
$\Res_A L\simeq X\otimes^{\BL}_B\Res_B(S')$. 

Assume that $\Res_{A\otimes B^\opp}X$ induces a perverse equivalence, and let $S'\in\CS_{B\rtimes G,i}$. Since $\Res_B(S')$ is
a direct sum of simple modules in $\CS_{B,i}$, we deduce that
$\Res_A H^j(L)$ has composition factors in $\CS_{A,i-1}$ for
$j\not=-q(i)$ and composition factors in $\CS_{A,i}$ for
$j=-q(i)$. We deduce that the composition factors of $H^j(L)$ have
the required property and $Y$ induces a perverse equivalence.
The converse statement has a similar proof.
\end{proof}

The following result shows that the lifting strategy is well behaved
with respect to outer automorphisms.

\begin{cor}
\label{co:liftingperv}
Let $A$ and $B$ be two symmetric $k$-algebras with no simple direct
factors and endowed with the action of a finite group $G$ with
$|G|\in k^\times$.
Let $M$ be a bounded
complex of $\bigl((A\otimes B^\opp)\rtimes G\bigr)$-modules
such that $\Res_{A\otimes B^\opp}M$ induces a stable equivalence.
Let
$(q_1,\CS^1_\bullet),\ldots,(q_d,\CS^d_\bullet)$ be a family of $G$-invariant
perversity data for $A$ and let
$(q'_1,\CS^{\prime 1}_\bullet),\ldots,(q'_d,\CS^{\prime d}_\bullet)$ be
the corresponding $G$-compatible perversity data for $A\rtimes G$.
Let $B_0=A,\ldots,B_d$ be algebras endowed with a $G$-action
and $X_i$ be complexes of $\bigl((B_{i-1}\otimes B_i^\opp)\rtimes G\bigr)$-modules
for $i=1,\ldots,d$, such that 
$\Res_{B_{i-1}\otimes B_i^\opp}(X_i)$ induces a
perverse equivalence $F_i:D^b(B_i)\iso D^b(B_{i-1})$ relative to
$(q_i,\CS^i_\bullet)$. Assume that the sets $\{M\otimes_B S\}_{S\in\CS_B}$ and
$\{F_1\cdots F_d(T)\}_{T\in\CS_{B_d}}$ coincide up to isomorphism
in $A\mstab$.

There is a $\bigl((B_d\otimes B^\opp)\rtimes G\bigr)$-module $N$ such that
$\Res_{B_d\otimes B^\opp}N$ induces a Morita equivalence and such that
the composition of perverse equivalences
$$D^b(A)\xrightarrow[\sim]{F_1}D^b(B_1)\to\cdots\to D^b(B_d)
\xrightarrow[\sim]{\Hom_{B_d}(N,-)}D^b(B)$$
lifts the stable equivalence induced by $M$.
\end{cor}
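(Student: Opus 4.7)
The plan is to compose the $X_i$'s into a single equivariant two-sided tilting complex for $(A,B_d)$, compare this composition with $M$ using the recognition criterion of \S\ref{se:recognition} to produce a Morita equivalence bimodule, and then equip the latter with a $G$-equivariant structure via Proposition~\ref{pr:equivariantstable}.

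First, set $Y=X_1\otimes^\BL_{B_1}X_2\otimes^\BL_{B_2}\cdots\otimes^\BL_{B_{d-1}}X_d$, a bounded complex of $\bigl((A\otimes B_d^\opp)\rtimes G\bigr)$-modules whose restriction to $A\otimes B_d^\opp$ is a two-sided tilting complex inducing $F:=F_1\circ\cdots\circ F_d$. Form $Z=R\Hom_A^\bullet(Y,M)$, a bounded complex of $\bigl((B_d\otimes B^\opp)\rtimes G\bigr)$-modules whose restriction to $B_d\otimes B^\opp$ induces the stable equivalence $\bar F^{-1}\circ\bar L\colon B\mstab\iso B_d\mstab$, where $\bar L$ is the stable equivalence induced by $M$. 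The hypothesis provides a bijection $\sigma\colon\CS_B\iso\CS_{B_d}$ with $F(\sigma(S))\simeq M\otimes_B S$ in $A\mstab$ for every $S$, so $\bar F^{-1}\bar L$ sends each simple $B$-module to a simple $B_d$-module (up to isomorphism). The argument of \S\ref{se:recognition} (invoking \cite[Theorem 2.1]{Li2}) then yields a $(B_d,B)$-bimodule $N_0$ with no non-zero projective direct summand, projective as a left $B_d$-module and as a right $B$-module, isomorphic to $\Res_{B_d\otimes B^\opp}Z$ in $(B_d\otimes B^\opp)\mstab$, and such that $N_0\otimes_B-$ induces a Morita equivalence $B\mMod\iso B_d\mMod$.

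Next, apply Proposition~\ref{pr:equivariantstable}, with the roles of $A$, $B$, $M$, and $T$ therein played respectively by $B_d$, $B$, $Z$, and $N_0$ placed in degree $0$ (which is trivially a two-sided tilting complex). The nontrivial hypothesis to verify is that $\Res_{B_d}N_0$ extends to a complex of $(B_d\rtimes G)$-modules: this holds because $\Res_{B_d\rtimes G}Z$ is such a complex with $\Res_{B_d}Z\simeq\Res_{B_d}N_0$ in $B_d\mstab$, and the assumption $|G|\in k^\times$ permits equivariant averaging that upgrades this stable-category isomorphism to an actual $(B_d\rtimes G)$-module lifting $\Res_{B_d}N_0$. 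The proposition then yields a complex $C$ of $\bigl((B_d\otimes B^\opp)\rtimes G\bigr)$-modules with $\Res_{B_d\otimes B^\opp}C\simeq N_0$ in $D(B_d\otimes B^\opp)$; setting $N=H^0(C)$ produces the desired module. By construction, the displayed composition of $F_1,\ldots,F_d$ with $\Hom_{B_d}(N,-)$ induces $\bar L$ on stable categories, providing the required lift. The hardest step will be the verification of this extension hypothesis for Proposition~\ref{pr:equivariantstable}, which requires delicate tracking of projective summands across the $G$-equivariant and non-equivariant stable categories, enabled by $|G|\in k^\times$.
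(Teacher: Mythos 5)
Your overall strategy is the paper's: compose the equivariant complexes $X_i$ and $M$ into a single bounded complex $Z$ of $\bigl((B_d\otimes B^\opp)\rtimes G\bigr)$-modules inducing the stable equivalence $\bar F^{-1}\bar L$, note that the hypothesis forces this stable equivalence to send simple modules to simple modules, and conclude with Linckelmann's theorem \cite[Theorem 2.1]{Li2}. Those steps are fine.

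The gap is in your last step, where you obtain the $G$-equivariant structure on the Morita bimodule by appealing to Proposition~\ref{pr:equivariantstable}. Its hypothesis requires that $\Res_{B_d}(N_0)$ extend to a complex of $(B_d\rtimes G)$-modules, and your justification does not establish this. The isomorphism $\Res_{B_d}Z\simeq\Res_{B_d}N_0$ in $B_d\mstab$ that you invoke relates two objects that both vanish in the stable category ($N_0$ is projective as a left $B_d$-module and $\Res_{B_d}Z$ is perfect), so it carries no information; and ``equivariant averaging'' cannot manufacture a $(B_d\rtimes G)$-module structure on a $B_d$-module. The obstruction to extending a $G$-stable module lies in $H^2(G,k^\times)$ and need not vanish even when $|G|\in k^\times$: for instance $B_d=M_2(k)$ with $\Cyc2\times\Cyc2$ acting by conjugation through a non-split central extension inside $\GL_2(k)$ gives a $G$-stable projective $B_d$-module that does not extend. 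The detour is also unnecessary: since $Z$ is already a complex of $\bigl((B_d\otimes B^\opp)\rtimes G\bigr)$-modules, you should extract the module equivariantly. Because $|G|\in k^\times$, projectivity over $(B_d\otimes B^\opp)\rtimes G$ is detected on restriction to $B_d\otimes B^\opp$ and projective covers are compatible with restriction (Lemma~\ref{le:extensionprojectivecovers}), so $Z$ is isomorphic, in the quotient of the homotopy category by complexes of projective modules, to a $\bigl((B_d\otimes B^\opp)\rtimes G\bigr)$-module $N$ with no non-zero projective direct summand; then $\Res_{B_d\otimes B^\opp}N$ has no projective direct summand, is stably isomorphic to $\Res Z$, hence coincides with your $N_0$ and induces the Morita equivalence. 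This direct extraction is what the paper's proof does.
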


\begin{proof}
The existence of the algebras $B_i$ and of the complexes $X_i$ is
provided by Proposition \ref{pr:perverseextension}.
There is a $\bigl((B\otimes B_d^\opp)\rtimes G\bigr)$-module $N$ with no
projective direct summands and such that $\Res_{B_d\otimes B^\opp}N$
induces a stable equivalence isomorphic to the one induced by
the composition of functors
$$D^b(B)\xrightarrow{M\otimes_B-}D^b(A)\xrightarrow[\sim]{F_1}D^b(B_1)\to
\cdots\to D^b(B_d).$$
That stable equivalence sends simple modules to simple modules, and hence
$N$ induces a Morita equivalence by \cite[Theorem 2.1]{Li2}. The 
corollary follows.
\end{proof}

\subsection{Equivalences with extra structure}
\label{se:extra}
\subsubsection{Particular equivalences}
We consider data $\CB$ consisting,
for every $(G,\tG)\in\CE$, of
a family $\CB(G,\tG)$ of objects of $\Comp\bigl(RB_0\bigl(N_{\Delta}(G,\tG)\bigr)\bigr)$.

\smallskip
We say that $(G,\tG)\in\CE$
satisfies Brou\'e's $(\CB,\CC)$-conjecture (for principal
blocks) if there exists
$X\in\CB(G,\tG)$ such that $\Res_{N_G(P)\times G^\opp}(X)$
induces a $\CC$-equivalence between the principal blocks
of $G$ and $N_G(P)$.

\smallskip
We say that $\CB$ satisfies (S3) if:
\begin{itemize}
\item[(i)]
whenever $(G_i,\tG_i)\in\CE$ and $X_i\in\CB(G_i,\tG_i)$ for $i=1,2$, we have
$X_1\otimes X_2\in \CB(G_1\times G_2,\tG_1\times\tG_2)$;
\item[(ii)] whenever $G_1\lhd G_2\lhd\tG_2\le\tG_1$ with $G_1\lhd \tG_1$,
if $X\in \CB(G_1,\tG_1)$, we have
\[b_0(G_2)\Ind^{N_\Delta(G_2,\tG_2)}\Res_{N_\Delta(G_1,\tG_1)\cap (N_{\tG_2}(P_2)\times\tG_2^\opp)}(X)\in\CB(G_2,\tG_2);\]
\item[(iii)] given $(G,\tG)\in\CE$, $X\in\CB(G,\tG)$,
 $n\ge 0$ and $L$ an $\ell'$-subgroup
of $\GS_n$, we have $X^{\otimes n}\in \CB(G^n,\tG\wr L)$;
\item[(iv)] whenever $G$ is an abelian $\ell$-group and $(G,\tG)\in\CE$, we have
$RG\in\CB(G,\tG)$;
\item[(v)] given $(G,\tilde{G})\in\CE$ and $(G,\hat{G})\in\CE$ with
$\tilde{G}\le\hat{G}$ and $\hat{G}=\tilde{G}C_{\hat{G}}(P)$ where
$P$ is a Sylow $\ell$-subgroup of $G$, if $X\in\CB(G,\tG)$, then
$b_0\bigl(N_\Delta(G,\hat{G})\bigr)\Ind^{N_\Delta(G,\hat{G})}(X)\in\CB(G,\hat{G})$.
\end{itemize}

\begin{prop}
\label{pr:reductionS3}
Let $\CP$ be the set of pairs $(G,\tilde{G})\in\CE$ satisfying Brou\'e's
$(\CB,\CC)$-conjecture.

If $\CB$ satisfies (S3) and
$\CC$ satisfies (S1) and (S2), then $\CP$ satisfies property (*').
\end{prop}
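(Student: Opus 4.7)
The plan is to mirror the proof of Proposition~\ref{pr:stabBroue1}, which already established (*') for the weaker notion of ``admitting a $\tG/G$-equivariant $\CC$-equivalence'', adding at each step a check that the complex witnessing the equivalence also lies in $\CB$. The axioms (S3)(i)--(v) have been formulated to match term-by-term with the construction lemmas used there, so what remains is essentially a bookkeeping exercise.

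Concretely, for condition (i) of (*'), given witnesses $X_i\in\CB(G_i,\tG_i)$ for $i=1,2$, the tensor product $X_1\otimes_R X_2$ induces an equivariant $\CC$-equivalence by Lemma~\ref{le:extensionproduct} (which uses (S2)) and lies in $\CB(G_1\times G_2,\tG_1\times\tG_2)$ by (S3)(i). For condition (ii), given $(H,\tH)\in\CP$ with witness $X\in\CB(H,\tH)$ and a chain $H\lhd G\lhd\tG\le\tH$, Lemma~\ref{le:extension} produces the complex $Y=b_0\bigl(N_G(P)\times G^\opp\bigr)\Ind^{N_\Delta(G,\tG)}\Res(X)$, which induces a $\tG/G$-equivariant $\CC$-equivalence between the principal blocks of $G$ and $N_G(P)$; this $Y$ is exactly the complex placed in $\CB(G,\tG)$ by (S3)(ii). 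Conditions (iii) and (iv) are handled identically, the former from Lemma~\ref{le:wreath} together with (S3)(iii), the latter by taking $X=RG$ and invoking (S3)(iv), the identity Morita equivalence being trivially a $\CC$-equivalence. Condition (v) is immediate because $B_0(G)=B_0\bigl(G/O_{\ell'}(G)\bigr)$ and the analogous identification for $\tG$ make the ambient module categories coincide, so a witness on one side is literally a witness on the other. Finally, for (vi), I follow the proof of Proposition~\ref{pr:stabBroue1}: the assumption $\hat G=\tG C_{\hat G}(P)$ makes Lemma~\ref{le:AlperinDade} applicable, showing that $Y=b_0\bigl(N_\Delta(G,\hat G)\bigr)\Ind^{N_\Delta(G,\hat G)}_{N_\Delta(G,\tG)}(X)$ restricts to $X$ on $N_\Delta(G,\tG)$ and so still induces an equivariant $\CC$-equivalence; membership $Y\in\CB(G,\hat G)$ is then exactly (S3)(v).

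I do not anticipate any genuine obstacle: the five clauses of (S3) were formulated precisely so that each matches the corresponding construction lemma, and the $\CC$-equivalence content has already been done in Proposition~\ref{pr:stabBroue1}. The only point requiring care---and the main place where a hurried verification could slip---is checking that the explicit idempotents and induction/restriction ranges appearing in (S3)(ii) and (S3)(v) agree verbatim with those produced by Lemmas~\ref{le:extension} and~\ref{le:AlperinDade}; once this identification is confirmed, every clause of (*') decomposes transparently into its $\CC$-part (supplied by Proposition~\ref{pr:stabBroue1}) and its $\CB$-part (supplied by the matching clause of (S3)).
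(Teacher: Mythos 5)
Your proof is correct and is exactly the argument the paper intends: the paper's own proof is the single sentence ``follows from Proposition~\ref{pr:stabBroue1} and its proof,'' and your clause-by-clause matching of (S3)(i)--(v) with Lemmas~\ref{le:extensionproduct}, \ref{le:extension}, \ref{le:wreath}, \ref{le:AlperinDade} and the $O_{\ell'}$-quotient identification is precisely what that citation unpacks to.
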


\begin{proof}
The proposition follows from Proposition \ref{pr:stabBroue1} and its proof.
\end{proof}

\subsubsection{Examples of stable data}

Let us define various data $\CB$. Given $(G,\tG)\in\CE$, we describe
the condition for $X$ to be in $\CB(G,\tG)$. We set
$A=RB_0(G)$, $B=RB_0\bigl(N_G(P)\bigr)$ and $Y=\Res_{A\otimes B^\opp}(X)$. We
denote by $E$ an $\ell'$-subgroup of $N_{\tG}(P)$ such that
$N_{\tG}(P)=PE$.

\begin{itemize}
\item {\em Splendid complexes}:
$Y^i$ is a direct summand of a direct sum of modules of the form
$\Ind_{\Delta Q}^{N_G(P)\times G^\opp}(R)$, where $Q\le P$, for $i\in\BZ$.

\item {\em (Increasing) perverse complexes} ($R=k$):
$Y\otimes_A-$ is perverse relative to some datum $(q,\CS_{A,\bullet},
\CS_{B,\bullet})$ (resp.\ and $q$ is increasing).

\item {\em Iterated perverse complexes} ($R=k$ or $\CO$):
there is a sequence of algebras $A_1=B,A_2,\ldots,A_l=A$ with actions
of $E$ and complexes $X_i\in \Comp^b\bigl((A_i\otimes A_{i+1}^\opp)\rtimes E\bigr)$
for $i=1,\ldots,l-1$ such that 
\begin{itemize}
\item the actions of $E$ on $A_1$ and $A_l$ are the canonical actions,
\item $kX_i\otimes_{kA_{i+1}}^\BL-$ is perverse, and
\item $X\simeq X_1\otimes_{A_2}^\BL\cdots\otimes_{A_{l-1}}^\BL X_{l-1}$
in $D\bigl((B\otimes A^\opp)\rtimes E\bigr)$.
\end{itemize}

\item {\em Positively gradable complexes} ($R=k$ or $\CO$).
There is a non-negative grading on $kB_0(G)$ and
a structure of graded $(kB\otimes kA^\opp)$-module on $kY$.
Here, we take a tight grading on $kB$, \ie, one for which
$J^i(kB)=(kB)_i\oplus J^{i+1}(kB)$ \cite{Rou6}.

\item {\em Character maps} ($R=K$).
$H^i(Y)=0$ for $i\not=0,1$ and $\Hom_{A\otimes B^\opp}\bigl(H^0(Y),H^1(Y)\bigr)=0$.

\item {\em Perfect character maps}: $Y$ defines a character map (cf.\ above) and
denoting by $\mu$ the character of $X$, the following holds for
$g\in \tG$ and $h\in N_{\tG(P)}$ such that $(h,g)\in N_{\Delta}(G,\tG)$:
\begin{itemize}
\item $\mu(h,g)\in \gcd\bigl(|C_{\tG}(g)|,|C_{N_{\tG}(P)}(h)|\bigr)\CO$;
\item if one of $g,h$ is an $\ell'$-element and the other is not, then
$\mu(h,g)=0$.
\end{itemize}
\end{itemize}

\begin{rem}
Note that the data $\CB$ defined above prescribe conditions only on
$\Res_{N_G(P)\times G^\opp}(X)$,
except in the cases of iterated perverse complexes and perfect character
maps.
\end{rem}

Let us explain how the definitions above relate to classical notions.

\smallskip
Let $R=K$ and $\CB$ be the data
of character maps. Assume that $\tG=G$. Then
$[X\otimes_{KG}-]:K_0\bigl(KB_0(G)\bigr)\to K_0\bigl(KB_0\bigl(N_G(P)\bigr)\bigr)$ is
a morphism of abelian groups. This gives a bijection
$$\CB(G,G)/\mathrm{iso.}\iso
\Hom_{\BZ\mMod}\Bigl(K_0\bigl(KB_0(G)\bigr),K_0\bigl(KB_0\bigl(N_G(P)\bigl)\bigl)\Bigl).$$

Let $\CC$ be the class of acyclic complexes. Then, $X$ induces a
$\CC$-equivalence if and only if $[X\otimes_{KG}-]$ is an
isometry (we require isometries to be bijective). The isometry is
perfect \cite{Br2} if $\CB$ is taken to be the data of perfect character
maps.

\medskip
Let $R$ be either $k$ or $\CO$, and let $\CC$ be given by complexes homotopy equivalent to
$0$ and $\CB$ by splendid complexes. An object $X\in\CB(G,G)$
induces a $\CC$-equivalence if and only if it induces a splendid
Rickard equivalence \cite{Ri5}.

\begin{prop}
\label{pr:specialB}
The data $\CB$ defined above satisfy property (S3).
\end{prop}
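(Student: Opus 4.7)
The plan is to verify axioms (S3)(i)--(v) case by case for each of the six families of complexes defined above: splendid, (increasing) perverse, iterated perverse, positively gradable, character maps, and perfect character maps. Since $\CB$ constrains only the restriction $\Res_{N_G(P)\times G^\opp}(X)$ in all but the iterated perverse and perfect character map cases, the verification for most of these data reduces to standard functoriality of the defining notion under tensor products (for (i)), restriction followed by induction through an $\ell'$-extension (for (ii)), the tensor-power construction of \cite[\S 1.1]{De} (for (iii)), the trivial abelian case (for (iv)), and induction along a centralizing $\ell'$-subgroup (for (v)). In each case the proof parallels that of Proposition \ref{pr:stabBroue1}, carried out at the level of complexes rather than of equivalences.

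For splendid complexes, (i) is immediate since a tensor product $\Ind_{\Delta Q_1}^{\bullet}(R)\otimes \Ind_{\Delta Q_2}^{\bullet}(R)$ has the form $\Ind_{\Delta(Q_1\times Q_2)}^{\bullet}(R)$; (ii) follows from the Mackey formula; (iii) from the explicit description of tensor powers of splendid complexes; and (iv), (v) are straightforward. For character maps and perfect character maps (with $R=K$), the cohomological concentration condition and the $\Hom$-vanishing condition are preserved under tensor and induction by the K\"unneth formula and adjunction, and perfection propagates because the divisibility and $\ell'$-vanishing conditions on the character $\mu$ are multiplicative under products and compatible with induction to normalizers of diagonal subgroups. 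For positively gradable complexes, the tight grading on $kB_0$ extends compatibly under all five operations by naturality of the tight-grading construction of \cite{Rou6}.

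The perverse, increasing perverse, and iterated perverse cases are the most delicate. The principal technical input is Lemma \ref{le:indperverse}, which matches a $G$-invariant perversity datum on $A$ with a $G$-compatible one on $A\rtimes G$. Combined with the Mackey-type arguments used in Lemmas \ref{le:extensionequiv}--\ref{le:wreath}, this yields (ii) and (iii) for the perverse and increasing perverse cases, while (i) follows because a tensor product of perverse equivalences is perverse relative to the product of the perversity data and filtrations; (iv) and (v) are immediate. The main obstacle is the iterated perverse case, where one must produce the intermediate algebras $A_1,\ldots,A_l$ and bimodule complexes $X_1,\ldots,X_{l-1}$ compatibly with each of the five operations. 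This is handled by applying Proposition \ref{pr:perverseextension} to extend the whole sequence of perversity data equivariantly, and then transporting the resulting sequence termwise: through tensor products for (i), through the restriction/induction of Lemma \ref{le:extension} for (ii), through the wreath construction of Lemma \ref{le:wreath} for (iii), and through the Alperin--Dade isomorphism of Lemma \ref{le:AlperinDade} for (v). The compatibility required in the definition of iterated perverse complex, namely $X\simeq X_1\otimes_{A_2}^{\BL}\cdots\otimes_{A_{l-1}}^{\BL}X_{l-1}$ in $D((B\otimes A^\opp)\rtimes E)$, is preserved by each of these operations because the relevant induction and tensor functors commute up to canonical isomorphism with derived tensor product over the intermediate algebras.
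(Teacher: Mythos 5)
Your treatment of most of the data (splendid complexes, character maps, the perverse families via Lemma \ref{le:indperverse}) matches the paper's, which likewise dismisses properties (S3, i, iii, iv, v) as easy and isolates (S3, ii) as the only point needing care. But there is a genuine gap in the case the paper regards as the substantive one: \emph{positively gradable complexes}, property (S3, ii). You dispose of it with ``the tight grading on $kB_0$ extends compatibly under all five operations by naturality of the tight-grading construction of \cite{Rou6},'' and this does not address the actual difficulty. The tight grading lives on $B=kB_0(N_G(P))$; what the definition requires in addition is a non-negative grading on $A=kB_0(G)$ together with a graded module structure on $kY$. For (S3, ii) one starts from a complex $X$ for $(G_1,\tG_1)$ whose restriction carries such data, and must produce the analogous data for $b_0(G_2)\Ind^{N_\Delta(G_2,\tG_2)}\Res(X)$. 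The obstruction is that the given non-negative grading on $kB_0(G_1)$ and the graded structure on $kY$ need not be invariant under the relevant $\ell'$-group $E$ of automorphisms, and without equivariance nothing descends or induces. The paper resolves this with Proposition \ref{pr:liftgradings}: one first chooses an $E$-invariant tight grading on $B$, then transports it through the two-sided tilting complex to an $E$-invariant grading on $A$ (an argument via derived Picard groups, $\Out^G(A)^0\iso\Out^G(B)^0$, and a rigidity/shrinking argument over $C_{\Aut(A)}(G)$), and finally shows that if \emph{some} compatible grading on $A$ is non-negative then an $E$-invariant compatible one can be chosen non-negative (via the function $d:\CS_B\to\BZ$ of \cite[Lemma 5.15]{Rou6}, which is $G$-invariant because $f$ is). None of this is a formal naturality statement, and your argument as written would not go through without it.

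Two smaller points. For perfect character maps you assert that the divisibility and $\ell'$-vanishing conditions on $\mu$ are ``multiplicative under products and compatible with induction'' without justification; the paper instead cites \cite[Theorem 1E, Theorem 2B, \S 6]{FoHa}, and these compatibilities (especially under the wreath and restriction--induction operations) are precisely what is proved there, so some reference or argument is needed. For the iterated perverse case your appeal to Proposition \ref{pr:perverseextension} is not quite the right tool --- that proposition \emph{constructs} an equivariant chain of perverse equivalences from invariant perversity data, whereas here the chain $A_1,\ldots,A_l$ with its $E$-action is already part of the datum and one only needs Lemma \ref{le:indperverse} applied link by link, as the paper does; this is a detour rather than an error.
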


\begin{proof}
The case of character maps is immediate, while the case of perfect character
maps follows from \cite[Theorem 1E, Theorem 2B, \S 6]{FoHa}.

The properties (S3, i, iii, iv, v) are easy in all other cases.

Property (S3, ii) for (iterated, increasing) perverse complexes follows from
Lemma \ref{le:indperverse}.

Let us consider the case of graded complexes. Take $R=k$.
Let $(G,\tG)\in\CE$ and $X$ a complex of
$\bigl((B\otimes A^\opp)\rtimes E\bigr)$-modules whose restriction to $B\otimes A^\opp$
is a two-sided tilting complex.
There is an $E$-invariant tight grading on
$B$ \cite[\S 6.2.1]{Rou6}. Proposition
\ref{pr:liftgradings} below shows that there is an $E$-invariant grading
on $A$ and a compatible grading on $X$. Furthermore, if there is
a non-negative grading on $A$ compatible with that on $B$ via
$\Res_{B\otimes A^\opp}(X)$, then we can choose an $E$-invariant
compatible grading on $A$ that is non-negative. We deduce that
property (S3, ii) holds.

The property is clear for the other types of data.
\end{proof}

The following proposition examines the behaviour of automorphisms and
gradings under actions of finite groups.

\begin{prop}
\label{pr:liftgradings}
Assume that $k$ is algebraically closed.
Let $A$ and $B$ be two finite dimensional $k$-algebras endowed with
the action of a finite group $G$. Let $\Out^G(A)$ be the image of
$C_{\Aut(A)}(G)$ in $\Out(A)$.

Let $C$ be a complex of $\bigl((A\otimes B^\opp)\rtimes G\bigr)$-modules such that
$\Res_{A\otimes B^\opp}(C)$ is a two-sided tilting complex. Then
the canonical isomorphism $\Out(A)^0\iso\Out(B)^0$ induced by $C$
restricts to an isomorphism $\Out^G(A)^0\iso\Out^G(B)^0$.

Assume that $|G|\in k^\times$, and fix a grading on $B$ invariant under $G$.
Then there is a grading on $A$
invariant under $G$ and a structure of graded 
$\bigl((A\otimes B^\opp)\rtimes G\bigr)$-module on $C$, where $G$ is in degree $0$.

Assume that the morphism $\BG_m\to\Out^G(B)$ induced by the grading on $B$ can
be lifted to a morphism $\BG_m\to \Aut(B)$ such that the corresponding grading
is non-negative. Then, it can also be
be lifted to a morphism $\BG_m\to C_{\Aut(B)}(G)$ such that the corresponding
$G$-invariant grading is non-negative.
\end{prop}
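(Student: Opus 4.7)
For the first assertion, I will use the $G$-equivariance of $C$ directly. The isomorphism $\Out(A)^0\iso\Out(B)^0$ sends $\sigma\in \Aut(A)^0$ to the unique class $[\tau]\in\Out(B)^0$ such that ${_\sigma C}\simeq C_\tau$ in $D(A\otimes B^\opp)$. When $\sigma$ commutes with $G$, the twist ${_\sigma C}$ inherits a natural structure of $\bigl((A\otimes B^\opp)\rtimes G\bigr)$-module extending the one on $C$. By Lemma \ref{le:uptohomotopy} the isomorphism $\phi:{_\sigma C}\iso C_\tau$ in $D(A\otimes B^\opp)$ is unique up to a unit in $\End_{D(A\otimes B^\opp)}(C_\tau)\simeq B$, so transporting the $G$-action across $\phi$ produces a $G$-action on $C_\tau$ that is canonical modulo inner automorphisms of $B$. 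Unwinding this shows that $\tau$ is the image of an element of $C_{\Aut(B)}(G)$ under $\Aut(B)\to\Out(B)$, so $[\tau]\in\Out^G(B)^0$; symmetry gives the converse inclusion.

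For the second assertion, the $G$-invariant grading on $B$ is a morphism $\rho_B:\BG_m\to C_{\Aut(B)}(G)$ whose image in $\Out^G(B)^0$ transfers via the first assertion to $\bar\rho_A:\BG_m\to \Out^G(A)^0$. I plan to lift $\bar\rho_A$ to $\rho_A:\BG_m\to C_{\Aut(A)}(G)$ in two steps. First, rigidity of tori applied to the surjection $\Aut(A)^0\twoheadrightarrow\Out(A)^0$ of affine algebraic groups lifts $\bar\rho_A$ to some $\tilde\rho_A:\BG_m\to\Aut(A)^0$. Second, using $|G|\in k^\times$ and the linear reductivity of $\BG_m$, the $G$-orbit of $\tilde\rho_A$ inside the torsor of lifts of $\bar\rho_A$ admits a $G$-fixed point by averaging, which I take as $\rho_A$. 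This yields the desired $G$-invariant grading on $A$. To equip $C$ with a compatible graded structure, the morphisms $\rho_A$ and $\rho_B$ jointly define an action of $\BG_m$ on $\bigl((A\otimes B^\opp)\rtimes G\bigr)$ commuting with $G$; Lemma \ref{le:ext2side} applied with $G$ enlarged to $G\times\BG_m$ extends the module structure on $C$ to a graded $\bigl((A\otimes B^\opp)\rtimes G\bigr)$-module structure with $G$ in degree $0$.

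For the third assertion, given a non-negative lift $\tilde\rho_B:\BG_m\to\Aut(B)$ of the map $\BG_m\to\Out^G(B)^0$, each $G$-conjugate $g\tilde\rho_B(\cdot)g^{-1}$ is again a lift and is non-negative, since conjugation by an algebra automorphism preserves the non-negativity of a $\BG_m$-action on $B$. The set of non-negative lifts is therefore non-empty and $G$-stable, and I plan to produce a $G$-invariant element by averaging over $G$ inside the torsor of lifts, using $|G|\in k^\times$. The principal obstacle I anticipate is verifying that averaging preserves non-negativity; the correct viewpoint is that non-negativity of a $\BG_m$-action is equivalent to its extension to a morphism of algebraic monoids $\mathbb{A}^1\to\End_k(B)$, a closed condition in the scheme of lifts that is preserved by the affine $G$-action on the torsor, hence by averaging.
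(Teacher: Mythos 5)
Your proposal diverges from the paper's proof at every step, and each of the three divergences hides a genuine gap. For the first assertion, transporting the $G$-action across $\phi:{_\sigma C}\iso C_\tau$ shows at best that $\tau g\tau^{-1}g^{-1}$ is inner for every $g$, i.e.\ that $[\tau]$ lies in the centralizer in $\Out(B)$ of the image of $G$. That group contains $\Out^G(B)$ (the image of $C_{\Aut(B)}(G)$) but is in general strictly larger: passing from ``centralizes modulo inner'' to an honest $G$-commuting representative is a lifting problem with a nontrivial obstruction, and your ``unwinding'' does not address it. (Also, $\End_{D(A\otimes B^\opp)}(C_\tau)\simeq Z(B)$, not $B$; Lemma \ref{le:uptohomotopy} concerns endomorphisms over $A$ alone.) The paper sidesteps this by working scheme-theoretically with $D\Pic^G$, the subgroup of invertible complexes that \emph{extend} to equivariant complexes, for which invariance under $R\Hom^\bullet_A(C,-\otimes^\BL_A C)$ is immediate because $C$ itself is equivariant.

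The second and third assertions both rely on operations that do not exist. One-parameter subgroups do not lift along arbitrary surjections of algebraic groups (already $t\mapsto t^2:\BG_m\to\BG_m$ is a counterexample), so ``rigidity of tori'' does not produce $\tilde\rho_A$; and there is no way to ``average'' a $G$-orbit of cocharacters $\BG_m\to\Aut(A)^0$, since $\Aut(A)^0$ is noncommutative and the set of lifts of $\bar\rho_A$ is not a torsor under a group of homomorphisms (two lifts differ by a map to $\mathrm{Inn}(A)$ that is not itself a homomorphism). Your reformulation of non-negativity via extension to the affine line is correct but irrelevant, because the averaging it is meant to feed into is undefined; likewise Lemma \ref{le:ext2side} is stated for finite groups and cannot be applied with ``$G$ enlarged to $G\times\BG_m$''. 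The paper's actual arguments are quite different: for the second assertion it constructs the lift directly as a morphism of varieties on a neighbourhood $U$ of $1\in\BG_m$ with values in $C_{\Aut(A)}(G)$, using the universal family $L=R\Hom^\bullet_{B^\opp}\bigl(C,C\otimes^\BL_B(B\otimes\CO_{C_{\Aut(B)}(G)})\bigr)$ and the rigidity of $A$ as a projective $(A\rtimes G)$-module (this is precisely where $|G|\in k^\times$ enters), a local lift being sufficient for a cocharacter; for the third it does not deform the given lift at all but \emph{regrades} $B$ combinatorially, producing a $G$-invariant degree function $d:\CS_B\to\BZ$ from the $\Ext^1$-quiver data of \cite[Prop.\ 5.14, Lemma 5.15]{Rou6} and replacing $B$ by the endomorphism algebra of $\bigoplus_l P_l\langle l\rangle$.
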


\begin{proof}
We use the results of \cite[\S 4.2]{Rou6}. Let $D\Pic(A)$ be the locally
algebraic group whose $S$ points is the group of quasi-isomorphism classes of
invertible complexes of $(A^\en\otimes\CO_S)$-modules \cite{Ye}.
Let $D\Pic^G(A)$ be its subgroup of complexes that extend to a structure of 
complexes of $\bigl((A^\en\rtimes G)\otimes\CO_S\bigr)$-modules.
Let $\Pic^f(A)$ the subgroup of $D\Pic(A)$ whose $S$-points consist of
isomorphism classes of $(A^\en\otimes\CO_S)$-modules that are locally
free of rank $1$ as $(A\otimes\CO_S)$-modules and as
$(A^\opp\otimes\CO_S)$-modules. Let $\Pic^{f,G}(A)=\Pic^f(A)\cap D\Pic^G(A)$.
The canonical map $\Pic^f(A)\to\Out(A)$ restricts to a map
$\Pic^{f,G}(A)\to \Out^G(A)$.
The canonical isomorphism $R\Hom^\bullet_A(C,-\otimes_A^\BL C):
D\Pic(A)\iso D\Pic(B)$ restricts to isomorphisms
$\Pic^f(A)\iso \Pic^f(B)$ and $D\Pic^G(A)\iso D\Pic^G(B)$, and hence to
$\Pic^{f,G}(A)\iso \Pic^{f,G}(B)$. Passing to quotients, we obtain an
isomorphism between connected components $\Out^G(A)^0\iso\Out^G(B)^0$.

\smallskip
Let us assume now that there is a $G$-invariant grading on $B$, \ie, a
morphism $\phi:\BG_m\to C_{\Aut(B)}(G)$. This induces a morphism
$\BG_m\to\Out^G(B)$, and hence a morphism $\psi:\BG_m\to\Out^G(A)$. In order to
show that this morphism lifts to a morphism $\BG_m\to C_{\Aut(A)}(G)$, it
is enough to prove that there is a lift as a morphism of varieties,
in a neighbourhood of the identity of $\BG_m$.

We have the canonical structure of a
$\bigl((B^\en\rtimes G)\otimes\CO_{C_{\Aut(B)}(G)}\bigr)$-module on
$B\otimes\CO_{C_{\Aut(B)}(G)}$: the action of 
$(B\rtimes G)\otimes\CO_{C_{\Aut(B)}(G)}$ is the canonical one, while
the right action of $b\in B$ is given by right multiplication by $\rho(b)$,
where $\rho:B\to B\otimes\CO_{{C_{\Aut(B)}(G)}}$ is the universal
algebra map: at a closed point of $C_{\Aut(B)}(G)$, it is the
corresponding automorphism of $B$.

Let $L=R\Hom^\bullet_{B^\opp}\bigl(C,C\otimes^\BL_B (B\otimes\CO_{C_{\Aut(B)}(G)})\bigr)$,
a complex of $\bigl((A^\en\rtimes G)\otimes\CO_{C_{\Aut(B)}(G)}\bigr)$-modules.
We have $L(1)\simeq A$ in $D(A^\en\rtimes G)$, hence
there is a neighbourhood $U$ of the identity in $C_{\Aut(A)}(G)$ such that
$H^i(L)=0$ for $i\not=0$ and
$H^0(L)$ is an 
$\bigl((A^\en\rtimes G)\otimes\CO_U\bigr)$-module
that is free of rank $1$ as a left $(A\otimes\CO_U)$-module and as a right
$(A^\opp\otimes\CO_U)$-module. Shrinking $U$, we can
assume that $H^0(L)$ is isomorphic to $A\otimes\CO_U$ as
an $\bigl((A\rtimes G)\otimes\CO_U\bigr)$-module, since $A$ is projective
as an $(A\rtimes G)$-module, hence it is rigid.
Fixing an isomorphism of
$\bigl((A\rtimes G)\otimes\CO_U\bigr)$-modules
$H^0(L)\iso A\otimes\CO_U$ provides
a morphism $U\to C_{\Aut(A)}(G)$. We obtain a morphism 
$\phi^{-1}(U)\to C_{\Aut(A)}(G)$ lifting $\psi$ locally and we are done.

\smallskip
Assume that there is a lift $\BG_m\to\Aut(B)$ such that the
grading is non-negative. Consider the original grading on $B$.
Given two simple $B$-modules $S$ and $T$ in degree $0$, let $f(S,T)$ be
the smallest integer $d$ such that there are simple $B$-modules
$S_1,\ldots,S_n$ in degree $0$ and
integers $d_0,\ldots,d_n$ with $\sum d_i=d$ and
\begin{multline*}
\Ext^1_B(S,S_1\langle -d_0\rangle)\not=0,\Ext^1_B(S_1,S_2\langle
-d_1\rangle)\not=0,\ldots,\\
\Ext^1_B(S_{n-1},S_n\langle -d_{n-1}\rangle)\not=0,\Ext^1_B(S_n,T\langle
-d_n\rangle)\not=0.
\end{multline*}
\cite[Proposition 5.14 and Lemma 5.15]{Rou6} shows the existence
of a function $d:\CS_B\to\BZ$ such that given two simple
$B$-modules $S$ and $T$ in degree $0$, then $f(S,T)+d(S)-d(T)\ge 0$. Furthermore,
the proof of \cite[Lemma 5.15]{Rou6} provides a function that is
$G$-invariant, since $f$ is $G$-invariant.
Given $l\in\BZ$, there is a
decomposition as $(B\rtimes G)$-modules
$B=\bigoplus_l P_l$, where $P_l\simeq\bigoplus_{d(S)=l}P_S^{\dim S}$.
The graded algebra of endomorphisms of
$\bigoplus_l P_l\langle l\rangle$ has a non-negative grading, it is isomorphic
to $B$ as a $G$-algebra, and the induced morphism
$\BG_m\to\Out(B)$ is the same as the one coming from the original
grading on $B$  \cite[proof of Proposition 5.14]{Rou6}.
\end{proof}

We deduce from Propositions \ref{pr:starreduction}, \ref{pr:examplesS1S2},
\ref{pr:reductionS3} and \ref{pr:specialB} our main reduction theorem.

\begin{thm}
\label{th:mainreduction}
Let $\CC$ be given by contractible or acyclic complexes, with $R$-projective
components. Let $\CB$ be
one of the data defined above.

Let $\CF$ be a set of non-cyclic finite simple groups with non-trivial
abelian Sylow $\ell$-subgroups. Assume that, given $G\in\CF$, there
is a pair $(G,\tG)$  satisfying Brou\'e's $(\CB,\CC)$-conjecture with
the following property.
 Let $\hat{G}$ be a finite group containing
$\tG/O_{l'}(\tG)$ with $G\lhd\hat{G}$ and $\hat{G}/G$ a Hall
$\ell'$-subgroup of $\Out(G)$. Then, we require that
$N_{\hat{G}}(P)/C_{\hat{G}}(P)=N_{\tG}(P)/C_{\tG}(P)$, where $P$ is
a Sylow $\ell$-subgroup of $G$.

\smallskip
Let $G$ be a finite group with an abelian Sylow $\ell$-subgroup. 
Assume that all non-cyclic
composition factors of $G$ with order divisible by $\ell$ are in $\CF$.
Let $\tG$ be a finite group containing $G$ as a normal subgroup of $\ell'$-index.
Then $(G,\tG)$ satisfies Brou\'e's $(\CB,\CC)$-conjecture.
\end{thm}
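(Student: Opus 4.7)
The plan is to assemble the reduction directly from the four preceding propositions. Let $\CP$ denote the set of pairs $(G,\tG)\in\CE$ satisfying Brou\'e's $(\CB,\CC)$-conjecture. The strategy is to verify the hypotheses of Proposition \ref{pr:starreduction} (the primed version), then invoke it on $\CF$.

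First I would check the ingredients. By Proposition \ref{pr:examplesS1S2}, the data $\CC$ consisting of contractible (resp.\ acyclic) complexes with $R$-projective components satisfies (S1) and (S2). By Proposition \ref{pr:specialB}, the data $\CB$ under consideration (splendid, (increasing/iterated) perverse, positively gradable, character maps, perfect character maps) satisfies (S3). Applying Proposition \ref{pr:reductionS3} then yields immediately that $\CP$ satisfies property (*').

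Next I would reconcile the hypothesis of the theorem with the hypothesis of Proposition \ref{pr:starreduction} in the (*') form. For each $G\in\CF$, the pair $(G,\tG)$ supplied by the assumption lies in $\CP$; since property (v) of (*') (ensured by (S3,v) via Proposition \ref{pr:reductionS3}) lets us replace $\tG$ by $\tG/O_{\ell'}(\tG)$, and since the condition $N_{\hat{G}}(P)/C_{\hat{G}}(P)=N_{\tG}(P)/C_{\tG}(P)$ is precisely the compatibility required in the ``resp.'' clause of Proposition \ref{pr:starreduction}, the input data for that proposition are met. I would then conclude that for any $(G,\tG)\in\CE$ whose non-cyclic composition factors of order divisible by $\ell$ lie in $\CF$, we have $(G,\tG)\in\CP$, which is exactly the statement of the theorem.

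There is no real obstacle beyond careful bookkeeping: the entire content has been packaged into the preceding propositions, and the proof is essentially a one-line citation chain ``Propositions \ref{pr:starreduction}, \ref{pr:examplesS1S2}, \ref{pr:reductionS3} and \ref{pr:specialB}.'' The only point that deserves a moment's scrutiny is confirming that the axiom (S3,v) (needed so that $\CP$ satisfies (*',vi)) is actually established for each of the specific data $\CB$ in Proposition \ref{pr:specialB}; for the geometric conditions (splendid, perverse, graded) this follows from the compatibility of $\Ind$ with the structure via Lemma \ref{le:AlperinDade}, and for (perfect) character maps it is immediate from character induction.
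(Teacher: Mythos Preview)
Your proposal is correct and follows exactly the approach the paper takes: the theorem is stated immediately after the sentence ``We deduce from Propositions \ref{pr:starreduction}, \ref{pr:examplesS1S2}, \ref{pr:reductionS3} and \ref{pr:specialB} our main reduction theorem,'' and there is no further argument given. Your unpacking of the citation chain---verifying (S1), (S2) for $\CC$, (S3) for $\CB$, deducing (*') for $\CP$, and then matching the automizer hypothesis to the ``resp.'' clause of Proposition \ref{pr:starreduction}---is precisely what the paper intends.
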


\subsection{Brou\'e's conjecture for principal blocks}
\label{se:principal}

\begin{thm}
Let $G$ be a finite group with an abelian Sylow $\ell$-subgroup $P$, where
$\ell=2$ or $\ell=3$.

\begin{itemize}
\item If $\ell=2$ or ($\ell=3$ and $|P|\le 9$), then there is
a splendid Rickard equivalence between 
$B_0(G)$ and $B_0\bigl(N_G(P)\bigr)$.
\item If $|P|\le 9$, such an equivalence can be chosen to be a composition
of perverse equivalences.
\item If 
$|P|\le \ell^2$ and $P$ has no simple factor $\GA_6$ or $M_{22}$ (when
$\ell=3$), then the equivalence can be chosen to be a single
increasing perverse equivalence.
\end{itemize}
\end{thm}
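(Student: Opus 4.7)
The plan is to apply the main reduction theorem (Theorem \ref{th:mainreduction}) to each of the three statements, with the appropriate choice of data $\CB$ and $\CC$, and then rely on the case-by-case verifications made in Section \ref{se:defect33} (for $\ell=3$) and Section \ref{se:ell2} (for $\ell=2$). Concretely, I would take $\CC$ to be the contractible complexes (so that $\CC$-equivalences are Rickard equivalences) and choose $\CB$ as follows:
\begin{itemize}
\item For the first assertion, $\CB$ is the class of splendid complexes.
\item For the second assertion, $\CB$ is the class of iterated perverse complexes.
\item For the third assertion, $\CB$ is the class of increasing perverse complexes.
\end{itemize}
In each case $\CB$ satisfies (S3) by Proposition \ref{pr:specialB}, and $\CC$ satisfies (S1) and (S2) by Proposition \ref{pr:examplesS1S2}, so Theorem \ref{th:mainreduction} applies.

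Next I would identify the relevant family $\CF$ of non-cyclic finite simple groups with non-trivial abelian Sylow $\ell$-subgroup. For $\ell=3$ and $|P|\le 9$, these are exactly the simple groups treated in Section \ref{se:defect33}; for $\ell=2$, they are the simple groups with abelian Sylow $2$-subgroup, classified (by Walter's theorem) and treated in Section \ref{se:ell2}. For each $G\in\CF$ I would exhibit a pair $(G,\tG)\in\CE$ and a complex in $\CB(G,\tG)$ inducing a $\CC$-equivalence between $B_0(G)$ and $B_0(N_G(P))$, taking $\tG$ so that the automizer condition $N_{\tG}(P)/C_{\tG}(P) = N_{\hat G}(P)/C_{\hat G}(P)$ in the hypothesis of Theorem \ref{th:mainreduction} is satisfied. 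For the third statement, one further checks that a single increasing perverse equivalence can be obtained; for $\GA_6$ and $M_{22}$ this is impossible (two steps are genuinely needed), which is why they are excluded there but allowed in the second statement.

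For the lift from $k$ to $\CO$ and the splendid conclusion in the first assertion, I would combine the perverse/iterated perverse complex produced over $k$ with Theorem 3.1.5 (the splendid lifting theorem of Rickard), using the fact that the complexes constructed in \S \ref{se:constructionglue} and \S \ref{se:localtwists} are splendid by construction; hence a Rickard equivalence in $\CB(G,\tG)$ with splendid restriction automatically yields a splendid Rickard equivalence over $\CO$.

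The main obstacle is the input from the simple-group analysis: one has to produce, for every simple group $G\in\CF$, an \emph{equivariant} perverse (or iterated perverse) equivalence between $B_0(G)$ and $B_0(N_G(P))$, with the correct automizer. This is exactly what the explicit computations of Section \ref{se:defect33} accomplish (using the strategy of \S \ref{se:strategy} together with local twists of \S \ref{se:localtwists} to match Green correspondents) and what the analysis of Section \ref{se:ell2} provides for $\ell=2$. Once these case-by-case equivalences are in hand, Corollary \ref{co:liftingperv} upgrades them to equivariant forms, Proposition \ref{pr:perverseextension} transports them through wreath products, and Theorem \ref{th:mainreduction} assembles everything into the stated global result.
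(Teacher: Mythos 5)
Your proposal is correct and follows essentially the same route as the paper: the theorem is proved exactly by combining Theorem \ref{th:mainreduction} (with $\CC$ the contractible complexes and $\CB$ the splendid, iterated perverse, or increasing perverse complexes, respectively, which satisfy (S1)--(S3) by Propositions \ref{pr:examplesS1S2} and \ref{pr:specialB}) with the case-by-case equivariant equivalences for simple groups constructed in Sections \ref{se:defect33} and \ref{se:ell2}, and with Rickard's splendid lifting theorem to pass from $k$ to $\CO$. The only quibble is your citation ``Theorem 3.1.5'' for the lifting result, which in this paper is the unnumbered theorem of \S\ref{se:splendidperverse}; otherwise there is nothing to add.
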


The existence of a splendid increasing perverse Rickard equivalence
when $P$ is cyclic is already known \cite{ChRou2}. In the next two
sections, we show that the theorem holds by reduction
to simple groups, using Theorem \ref{th:mainreduction}.

\section{Defect $3\times 3$}
\label{se:defect33}

This section gives a combinatorial description of all principal blocks
of finite groups with Sylow $3$-subgroup $\Cyc3^2$, up to splendid Morita
equivalence, \ie, the source algebra is determined by the combinatorics.
The description is done in terms of perversity functions (both global
and local). Some blocks Morita equivalent to the local block
are given non-zero perversity function: in doing so, we try to
follow the precise form of the abelian defect group conjecture for finite groups
of Lie type.

\subsection{Local structure}
In this section we will collate the local information that we need to prove that the maps we obtain from the algorithm are really derived equivalences. This includes information on the centralizers of elements of order $3$, and on automizers of Sylow $3$-subgroups.

\subsubsection{Decompositions}
The structure of finite groups with elementary abelian Sylow $3$-subgroups of order $9$ is described in the following proposition
(cf.\ the proof of Proposition \ref{pr:starreduction}).

\begin{prop}
Let $G$ be a finite group with a Sylow $3$-subgroup $P$ isomorphic to
$\Cyc3^2$. Assume that $O_{3'}(G)=1$. Then
$O^{3'}(G)$ is simple or
$O^{3'}(G)=G_1\times G_2$ where $G_1$ and $G_2$ are simple groups with
Sylow $3$-subgroups of order $3$.
\end{prop}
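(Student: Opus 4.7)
The plan is to apply the structural result used in the proof of Proposition \ref{pr:starreduction} (following \cite[\S 5]{FoHa}) to $G$. This produces a normal subgroup
\[
N=H_0\times H_1^{d_1}\times\cdots\times H_n^{d_n}\lhd G,
\]
with $H_0$ an abelian $3$-group and each $H_i$ a non-abelian finite simple group of order divisible by $3$, such that $G$ embeds in $\tilde{H}_0\times \hat{H}_1\wr L_1\times\cdots\times \hat{H}_n\wr L_n$ where each $L_i$ is a $3'$-subgroup of $\GS_{d_i}$. The hypothesis $O_{3'}(G)=1$ forces $F(G)=O_3(G)\le P$, so $H_0\le O_3(G)$ has $|H_0|\in\{1,3,9\}$; moreover, the Sylow $3$-subgroup of $N$ embeds in $P$, so $|H_0|\cdot\prod_i(|H_i|_3)^{d_i}$ divides $9$.

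I will then enumerate the admissible configurations. If $|H_0|=9$ and $n=0$, then $O^{3'}(G)=P\simeq\Cyc{3}\times\Cyc{3}$, which realizes the second form of the conclusion with $G_1=G_2=\Cyc{3}$. If $H_0=1,\ n=1,\ d_1=1,\ |H_1|_3=9$, then $P\le H_1$ and $G/H_1$ is a $3'$-subgroup of $\Out(H_1)$, so $O^{3'}(G)=H_1$ is simple. The three configurations in which the full Sylow $3$-subgroup of $G$ already lies inside $N$ via two factors of $3$-part $3$---namely $H_0=1,\ n=2,\ d_1=d_2=1$; $H_0=1,\ n=1,\ d_1=2$; and $H_0=\Cyc{3},\ n=1,\ d_1=1$---all give $G/N$ a $3'$-group and $O^{3'}(G)=N$, a product of two simple groups each with Sylow $3$-subgroup of order $3$ (one of which may be $\Cyc{3}$). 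The case $|H_0|=3,\ n=0$ is ruled out because $G/O_3(G)$ would embed in $\Aut(\Cyc{3})\simeq\Cyc{2}$, contradicting $|P/O_3(G)|=3$.

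The one remaining configuration is $H_0=1,\ n=1,\ d_1=1,\ |H_1|_3=3$, in which the extra factor of $3$ in $|P|$ must come from an element of order $3$ in $\Out(H_1)$. The main obstacle is to rule this out, and I will do so by appealing to the classification of finite simple groups (cf.\ \cite[\S 5]{FoHa}): no non-abelian finite simple group $L$ with $|L|_3=3$ has $3\mid|\Out(L)|$. For $L=\PSL_2(q)$ with $q=p^f$ this follows from a direct lifting-the-exponent computation showing that $3\mid f$ forces $9\mid|L|$; the other classical and exceptional families are handled analogously, and the few sporadic groups with Sylow $3$-subgroup of order $3$ (notably $J_1$) have trivial outer automorphism group.
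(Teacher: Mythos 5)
Your argument is correct and follows essentially the same route as the paper, whose ``proof'' is just a pointer to the Fong--Harris decomposition $H_0\times H_1^{d_1}\times\cdots\times H_n^{d_n}\lhd G\lhd\tilde{G}\le \tH_0\times\hat{H}_1\wr L_1\times\cdots\times\hat{H}_n\wr L_n$ used in the proof of Proposition \ref{pr:starreduction}; your explicit enumeration of configurations (and the Goursat-type verification that $O^{3'}(G)=N$ in the split cases) is exactly what the paper leaves implicit. Two small remarks: in excluding $|H_0|=3$, $n=0$, the embedding is of $G/C_G(O_3(G))$ into $\Aut(\Cyc3)$, and one needs $C_G(F^*(G))\le F^*(G)$ (or simply that $\tH_0/H_0$ is a $3'$-group) to conclude; and your ``remaining configuration'' is already vacuous under the cited decomposition, since $\tH_0/H_0$, $\hat{H}_i/H_i$ and the $L_i$ are all $3'$-groups, forcing $|H_0|\prod_i(|H_i|_3)^{d_i}=9$ exactly --- though your direct CFSG check that $3\nmid|\Out(L)|$ for non-abelian simple $L$ with $|L|_3=3$ is a valid self-contained substitute for that part of the citation.
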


When $O^{3'}(G)=G_1\times G_2$ in the proposition above, then
$B_0(G_i)$ is splendidly Morita equivalent
to the principal block of the normalizer of a Sylow $3$-subgroup
(cf.\ Remark \ref{re:Moritacyclic}) and we deduce that the same holds
for $G$. We describe below perversity functions for other
equivalences. 

\smallskip
Let $G$ be a finite simple group with Sylow $3$-subgroup $P$ of order $3$.
Then, $G$ is of one of the following types
\begin{itemize}
\item $J_1$;
\item $\PSL_2(q)$, $\PSL_2(r)$;
\item $\PSL_3(q)$;
\item $\PSU_3(r)$.
\end{itemize}
Here, $q\equiv 2,5 \pmod 9$ ($q>2$ for $\PSL_2(q)$) and $r\equiv 4,7\pmod 9$.

\smallskip
In all of those cases, $|N_G(P)/C_G(P)|=2$.
Denote by $k$ (resp.\ $\eps$) the trivial (resp.\ non-trivial) simple $kB_0(N_G(P))$-module.
There is a perverse equivalence
between the principal blocks of $G$ and $N_G(P)$ corresponding to the following
perversity functions:

$$\begin{array}{|c|cc|}
\cline{1-3}
  & k & \eps \\
\cline{1-3}
J_1       & 0 & 0\\
\PSL_2(q) & 0 & 1\\
\PSL_2(r) & 0 & 2\\
\PSL_3(q) & 0 & 3\\
\PSU_3(r) & 0 & 6\\
\cline{1-3}
\end{array}$$

\begin{rem}
Note that $\PSL_2(4)\simeq \PSL_2(5)$ and the Deligne--Lusztig theory provides
two different perversity functions.
\end{rem}

\subsubsection{Automizers}
\label{se:automizers}
 We begin by describing the automizers for almost simple groups with Sylow $3$-subgroups isomorphic to $\Cyc3^2$.

Let $\tilde{G}$ be an almost simple group, \ie, a finite group
whose derived subgroup $G$ is simple and is the unique minimal
non-trivial normal subgroup. Let $P$ be a Sylow $3$-subgroup of $G$.
Assume that $P\simeq \Cyc3^2$ and $3\nmid[\tilde{G}:G]$.
We list the almost simple groups $\tilde{G}$ modulo the equivalence
relation generated by $\tilde{G}\sim\tilde{H}$ if $[G,G]=[H,H]$,
$\tilde{H}\le\tilde{G}$ and
$\tilde{G}=\tilde{H}C_{\tilde{G}}(P)$.

\smallskip
The classification is
the following. We indicate first the simple groups and then the
almost simple ones, modulo equivalence.

\begin{itemize}
\item $\Cyc4$: $\GA_6$, $\GA_7$
\item $\Cyc8$: $\GA_6.2_2=\PGL_2(9)$
\item $Q_8$: $M_{22}$, $\PSU_3(q)$, $\PSL_3(r)$; $\GA_6.2_3=M_{10}$
\item $D_8$: $\GA_8$, $\PSp_4(q)$, $\PSp_4(r)$, $\PSL_4(q)$, $\PSL_5(q)$,
$\PSU_4(r)$, $\PSU_5(r)$;  $\GS_6$, $\GS_7$
\item $SD_{16}$: $M_{11}$, $M_{23}$, $HS$; $\GA_6.2^2=\Aut(\GA_6)$, $M_{22}.2$,
\begin{itemize}
\item $\PSp_4(q).\Cyc{2n}$ if $q=2^n$ and
$\PSp_4(r).\Cyc{2n}$ if $r=2^n$ (extension by the extraordinary graph
automorphism)
\item $\PSL_3(r).\Cyc{n}$ if $r=p^n$ and $p\equiv 2\pmod 3$ (extension by the
Frobenius automorphism over $\BF_p$)
\item $\PSU_3(q).\Cyc{n}$ if $q^2=p^n$ and $p\equiv 2\pmod 3$ (extension by the
Frobenius automorphism over $\BF_p$).
\end{itemize}
\end{itemize}

Here, $q\equiv 2,5\pmod 9$ $(q>2$ for $\PSU_3(q)$ and $\PSp_4(q)$)
and $r\equiv 4,7\pmod 9$, so that
$\ell\mid \Phi_1(r)$ and $\ell\mid \Phi_2(q)$. Note that $n$ is even in
the extended groups $\PSL_3(r).\Cyc{n}$ and $\PSU_3(q).\Cyc{n}$. Note 
finally that the square of the extraordinary graph automorphism is
the Frobenius over $\BF_2$.

%Let $p$ be a prime and $n>0$.
%The outer automorphism group of $\PSp_4(p^n)$ is
%\begin{itemize}
%\item $\Cyc2\times \Cyc{n}$ if $p$ is odd, where $\Cyc2$ is the subgroup
%of diagram automorphisms and $\Cyc{n}$ is generated by the Frobenius over
%$\BF_p$
%\item $\Cyc{2n}$ if $p=2$, with generator the extraordinary graph
%automorphism.
%\end{itemize}

\subsubsection{Centralizers of $3$-elements}
The structure of the centralizers of $3$-elements influences the local perversity functions,
for finite groups of Lie type. We provide here a description of
centralizers for those finite groups considered in \S \ref{se:automizers}.

\begin{prop} Let $\tilde{G}$ be one of the groups listed
in \S \ref{se:automizers} and assume that $G$ is an alternating group or is sporadic.
Let $x$ be an element of order $3$ in $\tilde G$.
We have 
$\Cent_{\tilde G}(x)=\langle x\rangle \times A$, where $A$ is given in the table
below.
The group $\Cent_{\tilde G}(x)$ is of the form $\Cyc3\times A$,
where $A$ is given in the table below. When there is more than
one conjugacy class of elements of order $3$, we list all possibilities.
$$\begin{array}{|c||c|c||c||c|c||c|c|c|}
\hline
\tilde{G} & \Alt6 & \GA_7 & \PGL_2(9) & M_{10} & M_{22} & 
\GS_6 & \GS_7 & \GA_8 \\
\hline
A & \Cyc3, \Cyc3 & \GA_4,\Cyc3 & \Cyc3 & \Cyc3 & \GA_4 & 
\GS_3,\GS_3 & \GS_3,\GS_4 & \GS_3,\GA_5\\
\hline
\end{array}$$

$$\begin{array}{|c||c|c|c|c|}
\hline
\tilde{G} & \Aut(\GA_6),M_{11} & M_{22}.2 & M_{23} & HS \\
\hline
A & \GS_3 & \GS_4 & \GA_5 & \GS_5 \\
\hline
\end{array}$$
\end{prop}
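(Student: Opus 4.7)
The proof is essentially a case-by-case verification; the plan is to split the list into three natural families and handle each by the appropriate tool.

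First, for the alternating and symmetric groups $\Alt_6, \GA_7, \GS_6, \GS_7, \Alt_8$, I will argue directly from cycle structure. In each case the elements of order $3$ have cycle type either $(3,1,\ldots,1)$ or $(3,3,1,\ldots,1)$; so there are at most two conjugacy classes to consider, which accounts for the two entries in the $\Alt_6$, $\GA_7$, $\GS_6$, $\GS_7$ and $\GA_8$ columns. For a single $3$-cycle $(1\,2\,3)$ in $\GS_n$ the centralizer is $\langle(1\,2\,3)\rangle\times \GS_{n-3}$, and intersecting with $\Alt_n$ replaces $\GS_{n-3}$ by $\Alt_{n-3}$ when $n\ge 6$; this yields the entries $\Cyc3$, $\Alt_4$, $\GS_3$, $\GS_4$, $\Alt_5$ of the table. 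For a double $3$-cycle $x=(1\,2\,3)(4\,5\,6)$ in $\GS_n$ the centralizer has order $18\cdot (n-6)!$: it contains $\langle(1\,2\,3),(4\,5\,6)\rangle\rtimes\langle(1\,4)(2\,5)(3\,6)\rangle$ together with $\GS_{n-6}$ on the remaining points, and I will exhibit explicitly a complement $A$ to $\langle x\rangle$ of the form $\GS_3\times\GS_{n-6}$ (taking $A=\langle(1\,2\,3)(4\,6\,5),(1\,4)(2\,5)(3\,6)\rangle$ for the $\GS_3$-factor, whose elements of order $3$ are distinct from $x^{\pm 1}$). Intersecting with $\Alt_n$ kills the odd permutation $(1\,4)(2\,5)(3\,6)$ when there is no free transposition to pair it with, giving $\Cyc3$ in $\Alt_6$ and $\GA_7$, but leaves $\GS_3$ visible in $\GS_6$, $\GS_7$ and in $\Alt_8$ (pairing with $(7\,8)$).

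Second, for the three almost simple extensions $\PGL_2(9)$, $M_{10}$, $\Aut(\Alt_6)$ of $\Alt_6$, the key point is that each of their outer elements normalizes, but does not centralize, a Sylow $3$-subgroup of $\Alt_6$: the two classes of $3$-elements in $\Alt_6$ are fused by any of the outer automorphisms of type $2_2$, $2_3$ or $2^2$ (this is the standard classification of the three outer involutions of $\Alt_6$). Consequently $C_{\tilde G}(x)=C_{\Alt_6}(x)=\Cyc3\times\Cyc3$ in all three cases, which matches the table rows $\PGL_2(9)$, $M_{10}$ and $\Aut(\Alt_6)$.

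Third, for the sporadic entries $M_{11}$, $M_{22}$, $M_{22}.2$, $M_{23}$ and $HS$, I will read the centralizers off the ATLAS character tables: in each case there is a unique class of elements of order $3$, and the stated orders $|A|=6,12,24,60,120$ match the ATLAS centralizer orders. To identify the isomorphism type $A$ rather than just its order, I will use that $O_{3'}\bigl(C_{\tilde G}(x)\bigr)$ is generated by the $3'$-part of $C_{\tilde G}(x)$ and determine its structure from the known maximal subgroup structure: for $M_{22}$ one has $C_{M_{22}}(x)=\Cyc3\times\Alt_4$ (sitting inside the maximal subgroup $2^4{:}\Alt_6$), and extension to $M_{22}.2$, $M_{23}$ or $HS$ enlarges $\Alt_4$ to $\GS_4$, $\Alt_5$ or $\GS_5$ respectively, as dictated by the ATLAS fusion data. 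The main obstacle is purely bookkeeping: one must make sure in each sporadic case that the normalizer $N_{\tilde G}(\langle x\rangle)$ really splits as $\langle x\rangle\times A$ (not just has the correct order), which follows in every case from the Schur--Zassenhaus theorem applied to $\langle x\rangle\lhd C_{\tilde G}(x)$ once one checks that $|A|$ is coprime to $3$, a condition directly visible from the ATLAS.
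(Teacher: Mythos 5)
Your overall strategy (cycle types for the alternating/symmetric cases, ATLAS data for the sporadic cases) is the same as the paper's, which simply declares the alternating cases trivial and cites the ATLAS for the rest. However, your second paragraph contains a genuine error. It is not true that every outer element of $\Aut(\GA_6)$ fuses the two $\GA_6$-classes of elements of order $3$: only the cosets of type $2_2$ and $2_3$ (giving $\PGL_2(9)$ and $M_{10}$) do so, while the coset of type $2_1$ is represented by the transpositions of $\GS_6\le\Aut(\GA_6)$, which preserve cycle type and certainly centralize suitable $3$-cycles (e.g.\ $(5\,6)$ centralizes $(1\,2\,3)$). Hence $\Cent_{\Aut(\GA_6)}(x)=\Cent_{\GS_6}(x)=\Cyc3\times\GS_3$ of order $18$, not $\Cyc3\times\Cyc3$; your conclusion contradicts the very table you are proving, which lists $A=\GS_3$ in the $\Aut(\GA_6)$ column. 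The correct argument is that the unique class of order-$3$ elements of $\Aut(\GA_6)$ has centralizer contained in $\GS_6$ (elements outside $\GS_6$ swap the two $\GA_6$-classes, so cannot fix $x$), and inside $\GS_6$ the centralizer of a $3$-cycle is $\Cyc3\times\GS_3$.

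A second, smaller problem is your appeal to Schur--Zassenhaus to split $\Cent_{\tilde G}(x)$ as $\langle x\rangle\times A$: the coprimality hypothesis fails, since $|A|$ is divisible by $3$ in almost every case ($\GA_4$, $\GS_3$, $\GS_4$, $\GA_5$, $\GS_5$ all have order divisible by $3$). What actually gives the splitting is that $\langle x\rangle$ is central in $\Cent_{\tilde G}(x)$ and the Sylow $3$-subgroup of $\Cent_{\tilde G}(x)$ is elementary abelian of order $9$: the class of the central extension $1\to\langle x\rangle\to \Cent_{\tilde G}(x)\to \Cent_{\tilde G}(x)/\langle x\rangle\to 1$ in $H^2(-,\Cyc3)$ is detected by restriction to a Sylow $3$-subgroup of the quotient, where it vanishes because $\langle x\rangle$ is a direct factor of the Sylow $3$-subgroup. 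Aside from these two points, your cycle-type computations for $\GA_6$, $\GA_7$, $\GA_8$, $\GS_6$, $\GS_7$ and your ATLAS look-ups for $M_{11}$, $M_{22}$, $M_{22}.2$, $M_{23}$ and $HS$ are correct and match the table.
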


\begin{proof}The proof of this is trivial for the alternating groups and follows from the information in the Atlas for the sporadic groups \cite{Atl}.
\end{proof}

We now move on to the groups of Lie type, where we choose convenient
representatives for $\tilde{G}$ up to equivalence.

\begin{prop} Let $\tilde G$ be one of the groups of Lie type in
\S \ref{se:automizers}.
Let $x$ be an element of order $3$ in $\tilde G$. We have 
$\Cent_{\tilde G}(x)=\langle x\rangle \times A$, where $A$ is given in the table
below.  When there is more than
one conjugacy class of elements of order $3$, we list all possibilities.
\begin{center}\begin{tabular}{|c|c|}
\hline $\tilde G$ & $A$
\\ \hline $\PSL_3(r)$ & $(\Cyc{(r-1)/3})^2\rtimes \Cyc3$
\\ $\PSL_3(r).\Cyc{n}$ & $((\Cyc{(r-1)/3})^2\rtimes\Cyc3)\rtimes \Cyc{n}$
\\ $\PSU_3(q)$ & $(\Cyc{(q+1)/3})^2\rtimes \Cyc3$
\\ $\PSU_3(q).\Cyc{n}$ & $((\Cyc{(q+1)/3})^2\rtimes \Cyc3)\rtimes\Cyc{n}$
\\ $\Sp_4(q)$ & $\SL_2(q)\times\Cyc{(q+1)/3}$, $\GU_2(q)/\Cyc3$
\\ $\Sp_4(q).\Cyc{2n}$ & $(\SL_2(q)\times\Cyc{(q+1)/3})\rtimes\Cyc{n}$,
 $(\GU_2(q)/\Cyc3)\rtimes\Cyc{n}$
\\ $\Sp_4(r)$ & $\SL_2(r)\times\Cyc{(r-1)/3}$, $\GL_2(r)/\Cyc3$
\\ $\Sp_4(r).\Cyc{2n}$ & $(\SL_2(r)\times\Cyc{(r-1)/3})\rtimes\Cyc{n}$,
 $(\GL_2(r)/\Cyc3)\rtimes\Cyc{n}$
\\ $\GL_4(q)$ & $\GL_2(q)\times \Cyc{(q^2-1)/3}$, $\GL_2(q^2)/\Cyc3$
\\ $\GU_4(r)$ & $\GU_2(r)\times\Cyc{(r^2-1)/3}$, $\GL_2(r^2)/\Cyc3$
\\ $\GL_5(q)$ & $\GL_3(q)\times \Cyc{(q^2-1)/3}$,
 $\Cyc{q-1}\times\GL_2(q^2)/\Cyc3$
\\ $\GU_5(r)$ & $\GU_3(r)\times\Cyc{(r^2-1)/3}$,
 $\Cyc{r+1}\times\GL_2(r^2)/\Cyc3$.
\\ \hline
\end{tabular}\end{center}
\end{prop}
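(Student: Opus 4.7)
The plan is to proceed case by case through the table, using that every element of order $3$ is semisimple in the ambient algebraic group (since $3$ is coprime to the defining characteristic in each entry) and that centralizers of semisimple elements in a connected reductive group are controlled by eigenspace decompositions. First I would handle the simple groups $G$, then deduce the extensions $\tilde G$ from them. For each simple $G$ listed, I fix an ambient reductive algebraic group $\BG$ with Frobenius $F$, realize $G$ as a quotient of $\BG^F$ by a central subgroup, and for each class of order-$3$ element choose a representative $\dot x \in \BG^F$ inside a conveniently chosen maximal torus; then $\Cent_{\BG}(\dot x)^{\circ}$ is generated by that torus together with the root subgroups $U_\alpha$ with $\alpha(\dot x) = 1$, and its $F$-fixed points are extracted via Lang--Steinberg.

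Concretely, for $G = \PSL_3(r)$ with $r \equiv 4, 7 \pmod 9$, I would lift $x$ to an element of $\SL_3(r)$ with eigenvalues $1, \zeta, \zeta^2$ ($\zeta$ a primitive cube root of unity in $\BF_r$); the $\SL_3$-centralizer is a maximal torus $T \cong (\BF_r^\times)^2$, and passage to $\PSL_3$ both adjoins the Weyl $3$-cycle, which centralizes the image of $\dot x$ modulo the center, and quotients by $Z(\SL_3) = \Cyc{3}$. Writing $r - 1 = 3m$ with $\gcd(m, 3) = 1$, one obtains $T/Z \cong \Cyc{3} \times (\Cyc{m})^2$, and splitting off $\langle x \rangle$ as the $\Cyc{3}$ factor gives $A = (\Cyc{m})^2 \rtimes \Cyc{3}$. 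The $\PSU_3(q)$ case is identical once the split torus is replaced by the twisted one of order $q + 1 = 3m$. For $\PSp_4$ and for $\PSL_4, \PSU_4, \PSL_5, \PSU_5$, there are two classes of order-$3$ elements, distinguished by the multiset of eigenvalues of a lift to the linear or symplectic group, and the centralizer is in each case the block-stabilizer of the corresponding eigenspace decomposition, with the symplectic or unitary pairing identifying the two non-trivial cube-root eigenspaces to produce a single factor of shape $\GL_2(q^2)/\Cyc{3}$, $\SL_2(q) \times \Cyc{(q+1)/3}$, $\GU_2(q)/\Cyc{3}$, and their analogues.

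For the extensions $\tilde G = G.\Cyc{n}$, respectively $G.\Cyc{2n}$, by a $3'$-group of outer automorphisms, the plan is to check that a chosen class representative $x$ can be centralized by the relevant subgroup of $\Cyc{n}$ (after a suitable inner adjustment) and to add that subgroup as a semidirect factor. The only delicate point is the $\PSp_4$ rows, where the outer group is $\Cyc{2n}$ but only $\Cyc{n}$ appears in $A$: the extraordinary graph automorphism of order $2$ does not centralize $x$, while its square (the Frobenius over $\BF_p$) does.

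The main obstacle is the bookkeeping: tracking (i) the central quotient at each passage from the simply connected cover to $\BG^F$ to $G$, (ii) the component group $\pi_0\bigl(\Cent_{\BG}(\dot x)\bigr)$, which is where the extra $\Cyc{3}$ in the $\PSL_3$ and $\PSU_3$ rows comes from, and (iii) the splitting $\Cent_{\tilde G}(x) = \langle x \rangle \times A$. None of these steps is deep, but each row of the table requires its own explicit check.
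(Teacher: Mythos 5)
Your outline is mathematically sound, but it is worth knowing that the paper does not actually carry out any of this: its entire proof reads ``These descriptions are well known'' followed by a list of references (Kunugi for $\PSL_3(r)$, Koshitani--Kunugi for $\PSU_3(q)$, Okuyama for $\Sp_4$, Koshitani--Miyachi for the rank $4$ and $5$ linear and unitary groups). So where the authors delegate, you propose to recompute, and your method --- semisimple centralizers via eigenspace decompositions in the ambient reductive group, the component group of $\Cent_{\BG}(\dot x)$ accounting for the extra $\Cyc{3}$ of Weyl-group origin in the $\PSL_3/\PSU_3$ rows, Lang--Steinberg to descend to rational points, and the observation that the graph automorphism of $\Sp_4$ does not centralize $x$ while its square does --- is exactly the standard argument that the cited lemmas themselves use, so the two routes agree in substance. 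What your version buys is self-containedness and a uniform explanation of why each centralizer has the stated shape; what it costs is the case-checking you acknowledge: verifying that the listed classes exhaust the order-$3$ elements in each group (e.g.\ ruling out classes whose lifts to the simply connected cover have order $9$), checking in each row that $\langle x\rangle$ really splits off as a direct factor (for instance via the determinant map in the $\GU_2(q)/\Cyc{3}$ case), and confirming the semidirect splittings by the outer $\Cyc{n}$. None of these checks is problematic, but until they are written out your proposal is a correct plan rather than a proof.
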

\begin{proof}
These descriptions are well known. Here are some references
\begin{enumerate}
\item $\PSL_3(r)$: see \cite[Lemma 3.1]{kunugi2000}.
\item $\PSU_3(q)$: see \cite[Lemma 2.5]{koshitanikunugi2001}.
\item $\Sp_4(q)$ ($\Sp_4(r)$ is similar): see \cite[Example 3.6]{Ok1}
% (for eigs $1,1,w,w^2$ we get for first one, for $w,w^2,w,w^2$ we get the second)
\item $\GL_4(q)$: see \cite[Lemma 2.2]{koshitanimiyachi2000}.
\item $\GU_4(r)$: see the proof of \cite[Lemma 2.2]{koshitanimiyachi2001}.
\item $\GL_5(q)$: see \cite[Lemma 2.6]{koshitanimiyachi2000}.
\item $\GU_5(r)$: see the proof of \cite[Lemma 2.2]{koshitanimiyachi2001}.
\end{enumerate}
\end{proof}

%Passing from the classical group to the associated simple group is not difficult for the symplectic groups, and for the linear and unitary groups it simply requires keeping track of the normal subgroup and quotient being taken. In this case, we get the following table.
%
%\begin{center}\begin{tabular}{|c|c|}
%\hline $G$ & $A$
%\\ $\PSL_3(r)$ & $\Cyc3\times(\Cyc{(r-1)/3}\times\Cyc{(r-1)/3})\rtimes \Cyc3$
%\\ $\PSU_3(q)$ & $\Cyc3\times(\Cyc{(q+1)/3}\times\Cyc{(q+1)/3})\rtimes \Cyc3$
%\\ $\PSp_4(q)$ & $(\SL_2(q)\ast\Cyc{q+1})$, $\GU_2(q)/\Cyc2$
%\\ $\PSp_4(r)$ & $(\SL_2(r)\ast\Cyc{r-1})$, $\GL_2(r)/\Cyc2$
%\\ $\PSL_4(q)$ & $\left((\SL_2(q)\times\Cyc{q+1}).\Cyc{q-1}\right)/\Cyc{(4,q-1)}$, $(\SL_2(q^2).\Cyc{q+1})/\Cyc{(4,q-1)}$
%\\ $\PSU_4(r)$ & $\left((\SU_2(r)\times\Cyc{r-1}).\Cyc{r+1}\right)/\Cyc{(4,r+1)}$, $(\SL_2(r^2).\Cyc{r-1})/\Cyc{(4,r+1)}$
%\\ $\PSL_5(q)$ & $\left((\SL_3(q)\times\Cyc{q+1}).\Cyc{q-1}\right)/\Cyc{(5,q-1)}$, $(\SL_2(q^2).\Cyc{q^2-1})/\Cyc{(5,q-1)}$
%\\ $\PSU_5(r)$ & $\left((\SU_3(r)\times\Cyc{r-1}).\Cyc{r+1}\right)/\Cyc{(5,r+1)}$, $(\SL_2(r^2).\Cyc{r^2-1})/\Cyc{(5,r+1)}$
%\\ \hline
%\end{tabular}\end{center}

\subsubsection{Reductions for Lie type}
We now reduce the number of groups that need to be checked to finitely many
by using splendid Morita equivalences between groups of a given Lie type.
\begin{itemize}
\item For $G=\PSL_3(r)$, in \cite[Theorem 1.2]{kunugi2000} it is shown that 
the principal block of $G$ is splendidly Morita equivalent to
that of $\PSL_3(4)$.
\item For $G=\PSU_3(q)$, in \cite[Theorem 0.2]{koshitanikunugi2001} it is shown that the principal block of $G$ is splendidly Morita
equivalent to that of $\PSU_3(2)=\Cyc3^2\rtimes Q_8$.
\item For $G=\PSL_4(q)$, in \cite[Theorem 0.3]{koshitanimiyachi2000} it is shown
that the principal block of $G$ is splendidly Morita equivalent to
that of $\PSL_4(2)$.
\item For $G=\PSL_5(q)$, in \cite[Theorem 0.2]{koshitanimiyachi2000}  it is shown
 that the principal block of $G$ is splendidly Morita equivalent 
to that of $\PSL_5(2)$ (and also to that of $N$).
\item For $G=\PSp_4(q)$, in \cite[Example 3.6]{Ok1}
it is shown that the principal block of $G$ is splendidly Morita equivalent 
to that of $\PSp_4(2)$.
\item  The principal $3$-blocks of $\PSp_4(r)$, $\PSU_4(r)$, and $\PSU_5(r)$
are splendidly Morita to those corresponding to $r=4$ (cf.\ Remark
\ref{re:changeq}).
\end{itemize}

Hence the simple
groups that we have to analyze are $\PSL_3(4)$, $\PSU_3(2)$, $\PSL_4(2)$, $\PSL_5(2)$, $\PSU_4(4)$, $\PSU_5(4)$, $\PSp_4(2)$ and $\PSp_4(4)$ of Lie type, and
$\GA_6$, $\GA_7$, $M_{11}$, $M_{22}$, $M_{23}$ and $HS$.

\subsection{Results of the algorithm}

In the next four subsections we will describe the result of the algorithm 
(see \S \ref{se:strategy}) on
various simple (and in two cases almost simple) groups $G$ with
a Sylow $3$-subgroup $P$ isomorphic to $\Cyc3^2$.
These are divided according to the
automizer $E=N_G(P)/C_G(P)$,
in the order $\Cyc4$, $Q_8$, $D_8$ and finally $SD_{16}$.

We assume that $K$ is big enough for the finite groups considered.
Each
section will follow the same template for automizer $E$: we start by giving
information on the group $P\rtimes E$, in particular its
simple modules and the radical series for trivial source modules (including
projectives). We write$N=N_G(P)$. Note that the canonical isomorphism
$N/O_{3'}(N)\iso P\rtimes E$ gives an isomorphism of algebras
$kB_0(N)\iso k(P\rtimes E)$ and we will freely identify modules
in the principal blocks of $N$ with $k(P\rtimes E)$-modules. We list in a
table the perversity functions and the local twists (local perversity
functions) as a summary of the results to be described in the subsections.
Note that a row of the table determines the block up to a splendid
Morita equivalence. This applies as well for composite perverse
equivalences.

\smallskip
In the subsections we examine each (almost) simple group in
turn, describing first the simple modules and Green correspondents, then
giving the perversity function $\per$ together with the decomposition matrix.
The Green correspondents are known in Lie type when ``$\ell\mid(q-1)$''
(Theorem \ref{th:Puig}).
In the other cases, they can be determined by a computer by
constructing the simple modules and decomposing their restriction.
 The identification
of $N$ with $\Cyc3^2\rtimes \Cyc4,\ldots,\Cyc3^2\rtimes SD_{16}$ is
not canonical. The choice we make affects the description of the
Green correspondents $C_i$.
When $G=G(q_0)$ is a finite group of Lie type, we 
provide the generic degree of the irreducible characters, a polynomial in
$q$ that specializes to the actual degree for $q=q_0$.

We give the decomposition matrix of $B_0(G)$ in an upper triangular form (in
some cases, we only provide the upper square part). This
gives rise to a basic set of ``unipotent characters'' $\{\chi_i\}_i$
in bijection with
simple modules (we always choose $\chi_i=1$). They agree with the
unipotent characters for Lie type, except for $\PSU_3(r)$ and
$\PSL_3(q)$, where we need a different (and larger) set.
Our numbering of simple modules gives an implicit bijection
between simple $B_0(G)$-modules $S_i$ and simple $B_0(N)$-modules $T_i$.
We construct the images $X_i$ in $B_0(N)$ of the simple modules under the
perverse equivalence determined by $\per$.
We give explicit descriptions of the complexes $X_i$
in all cases where feasible, and when they are not simple, \ie, when
$\per(i)\not=0$. We describe
the cohomology of the complexes $X_i$ in table form. Write
$[X_i]=\sum_j a_{ij}[T_j]$ in $K_0(kN)$. We have
$[S_i]=\sum_j (-1)^{\per(j)}a_{ij}d(\chi_j)$, where
$d:K_0(KG)\to K_0(kG)$ is the decomposition map. We indicate 
$\sum_j (-1)^{\per(j)}a_{ij}\chi_j$ in the table. This explains how
the classes $[X_i]$ determine the decomposition matrix of $B_0(G)$.

The last ingredient
in the construction is a twist of the stable equivalence between
$B_0(G)$ and $B_0(N)$. This is determined by functions
$\eta_R$, where $R$ runs over $N$-conjugacy classes of subgroups
of order $3$ of $P$. This twist is only needed in some cases
where $E=D_8$ or $E=SD_{16}$. We determine the images $Y_i$ of the modules
$S_i$ under the twisted stable equivalence, following \S \ref{se:localtwists}.
The maps in the complexes are uniquely determined, as we are in the
setting described in Remark \ref{re:onedimensional}. Note that in some
cases we have been unable to find perverse equivalences without introducing
local perverse twists.

We show that
$X_i$ is isomorphic to $Y_i$ in the stable category, as
needed to obtain a perverse equivalence between $kB_0(G)$ and
$kB_0(N)$ lifting the twisted stable equivalence (cf.\ \S \ref{se:strategy}).

In all cases it is immediate to check that the perversity function
$\per$ is invariant under the action of field automorphisms and under
the action of the outer automorphism group of $G$. Whenever this automorphism
group is non-trivial, this follows from the fact that the function $\per$
takes the same value on non-trivial ``unipotent'' characters of the same
degree. Also, the twisted stable equivalences are invariant under the
outer automorphism group of $G$.
This enables us to use Corollary \ref{co:liftingperv} and the descent
method of \S \ref{se:recognition}.

In the last paragraph \S \ref{se:nonprincipal}, we give an example
of a non-principal block.

\smallskip
Let us introduce some more notation.
The trivial modules are labelled $S_1$ and $T_1$. Often when describing the structure of a $kN$-module we will abbreviate $T_i$ to $i$, and to save space we denote the radical layers by $/$, so a module with $T_1$ in the head and $T_2$ in the second radical layer would be described as $1/2$. The projective cover of $T_i$ will be
denoted by $\Proj i$.

\begin{rem}
Okuyama has constructed derived equivalences for all blocks of simple groups
with Sylow $3$-subgroup $\Cyc3^2$ \cite{Ok1}.
Note that the equivalences in \cite{Ok1} are all compositions of
perverse equivalences. If the subsets $I_0,\ldots,I_r$ used
by Okuyama are nested (ie, $I_l\subset I_m$ or $I_m\subset I_l$ for
all $l,m$), then the composition itself is perverse.
\end{rem}

\subsection{Automizer $\Cyc4$}

Over an algebraically closed field of characteristic $3$, the group $\Cyc3^2\rtimes \Cyc4$ has four simple, $1$-dimensional modules, but over $\F_3$ (or any other field without a fourth root of unity) it possesses only three. We denote the two $1$-dimensional modules over $\BF_3$ by $T_1$ and $T_2$, and the $2$-dimensional simple (but not absolutely simple) module by $T_3$, which over $\F_9$ splits into $T_{3,1}$ and $T_{3,2}$. 

\[ \Proj1=\begin{array}{c}1\\3\\122\\3\\1\end{array},\quad \Proj2=\begin{array}{c}2\\3\\112\\3\\2\end{array},\quad \Proj3=\begin{array}{c}3\\1122\\333\\1122\\3\end{array}.\]

In this section we will prove that the perverse form of Brou\'e's
conjecture holds for $\Alt7$, but not for $\Alt6$. Instead we set up a
perverse equivalence from $\Alt6$ to $\Alt7$. (Note that if $\tilde G$ is an extension of $\Sym6$, rather that just $\Alt6$, then the perverse form of
Brou\'e's conjecture \emph{does} hold for $\tilde G$, as we shall see in the
next section.)

The perversity function is given in the following table.

$$\begin{array}{|c|ccc|}
\cline{1-4}
  & T_1 & T_2 & T_3 \\
\cline{1-4}
\GA_7 & 0 & 1 & 0\\
\cline{1-4}
\end{array}$$

In addition, there is a $2$-step perverse equivalence for $\GA_6$ 
$$kB_0(\GA_6)\xrightarrow{(0,0,1)} kB_0(\GA_7) \xrightarrow{(0,1,0)}
kB_0(N).$$

\subsubsection{The alternating group $\Alt7$}
\label{sec:A7}

Let $G=\Alt7$.

\smallskip
\paragraph{Simple modules}

There are four simple modules in $kB_0(G)$, of dimensions $1$, $10$, $10$ and $13$. Over $\F_3$, the two $10$-dimensional modules $S_{3,1}$ and $S_{3,2}$
amalgamate into a $20$-dimensional module $S_3$. Write $S_2$ for the $13$-dimensional simple module. The Green correspondents are
\[ C_1=1,\quad C_2=\begin{array}{c}2\\3\\2\end{array},\quad C_3=3.\]

\paragraph{The perverse equivalence}

There is a perverse equivalence between $kB_0(G)$ and $kB_0(N)$ with the $\per$-values on the left, which makes the decomposition matrix look as follows. Note
that this equivalence has been constructed already by Okuyama
\cite[Example 4.1]{Ok1}.

\begin{center}\begin{tabular}{|c|c|cccc|}
\hline $\per$ & Ord.\ Char.{} & $S_1$ & $S_{3,1}$ & $S_{3,2}$ & $S_2$
\\ \hline
   $0$&$1$ &1& & &
\\ $0$&$10$& &1& &
\\ $0$&$10$& & &1&
\\ $1$&$35$&2&1&1&1
\\\hline
      &$14$&1& & &1
\\    &$14$&1& & &1
\\ \hline \end{tabular}\end{center}

We have
\[ X_2:\;\;0\to \Proj2\to C_2\to 0,\]
and $H^{-1}(X_2)=\Omega^{-1}(C_2)=11/3/2$.

\subsubsection{The alternating group $\Alt6$}
\label{sec:A6}

Let $G=\Alt6$.

\smallskip
\paragraph{Simple modules}

There are four simple modules in $kB_0(G)$, of dimensions $1$, $3$, $3$ and $4$. Over $\F_3$, the two $3$-dimensional modules amalgamate into a $6$-dimensional module $S_3$. We label the $4$-dimensional simple module $S_2$.

We construct a perverse equivalence between the principal blocks
of $G$ and $H=\Alt7$
lifting the stable equivalence given by induction and restriction.
We label the three simple $\F_3 H$-modules in the principal block by $U_1$,
$U_2$ and $U_3$, with the ordering taken from Section \ref{sec:A7}. The images
$C_i$ of $S_i$ in $H$ under the stable equivalence are
\[ C_1=U_1,\quad C_2=U_2,\quad C_3=\begin{array}{c}U_3\\U_1\oplus U_1\\U_3\end{array}.\]

\paragraph{The perverse equivalence}

We set $\per(U_1)=\per(U_2)=0$ and $\per(U_3)=1$.
Let $X_i$ be the complex for $H$
image of the $i$-th simple module for the corresponding perverse equivalence.
We have $X_1=C_1$ and $X_2=C_2$. The structure of $\Proj{U_3}$ is
\[ \Proj{U_3}=\begin{array}{c}U_3\\U_1\oplus U_1\\U_2\oplus U_3\oplus U_2\\U_1\oplus U_1\\U_3\end{array}.\]
We deduce that $X_3=(0\to \Proj{U_3}\to C_3\to 0)$, with cohomology
$$H^{-1}(X_3)=(U_2\oplus U_2)/(U_1\oplus U_1)/U_3.$$
It satisfies the conditions of the algorithm, so produces a perverse
equivalence. This equivalence has been constructed by Okuyama
\cite[Example 4.2]{Ok1}.

\smallskip
\paragraph{Outer automorphisms}

The group $\Out(\Alt6)$ has order $4$, with three order-$2$ extensions,
yielding the groups $\Sym6$, $\PGL_2(9)$ and $M_{10}$
(the one-point stabilizer of the Mathieu group $M_{11}$).
In Section \ref{sec:S6} we will provide a perverse equivalence between the
principal block of $\Sym6$ and the principal block of
its normalizer, and this will be compatible
with the outer automorphism of $\Sym6$.
For the other two extensions $\PGL_2(9)$ and $M_{10}$, the decomposition
matrices are not triangular, and so there can be no perverse equivalence for
their principal blocks. However, since both of the equivalences
$D^b(kB_0(G))\iso D^b(kB_0(H))$ (from above) and
$D^b(kB_0(H))\iso D^b(kB_0(N))$ (from Section \ref{sec:A7})
are compatible with exchanging the two simple modules defined over $\F_9$,
the derived equivalence obtained by composing these two perverse equivalences
will extend to both $\PGL_2(9)$ and $M_{10}$.

\begin{rem}
While the decomposition matrix of $B_0(G)$ is triangular, the fact that
the principal block of $\PGL_2(9)$ has a non-triangular decomposition matrix
means that there can be no perverse equivalence between $kB_0(G)$ and
$kB_0(N)$.
Indeed, all standard stable equivalences are compatible with the action
of $\PGL_2(9)/G$ by Remark \ref{re:fixedpointfree}, hence all perverse equivalences 
extend to $\PGL_2(9)$
since the two modules of dimension $3$ in $B_0(G)$ are fixed by $\PGL_2(9)$
(cf.\ Corollary \ref{co:liftingperv}).
\end{rem}

\subsection{Automizer $Q_8$}

For the group $\Cyc3^2\rtimes Q_8$, there are five simple $\F_3$-modules, all absolutely simple. The first four are $1$-dimensional, and the last is $2$-dimensional. The three non-trivial $1$-dimensional module are permuted transitively by the $\Sym3$-group of outer automorphisms of $Q_8$. The projective indecomposable modules are as follows.
\[ \Proj1=\begin{array}{c}1\\5\\234\\5\\1\end{array},\quad \Proj2=\begin{array}{c}2\\5\\134\\5\\2\end{array},\quad \Proj3=\begin{array}{c}3\\5\\124\\5\\3\end{array},\quad \Proj4=\begin{array}{c}4\\5\\123\\5\\4\end{array},\quad \Proj5=\begin{array}{c}5\\1234\\555\\1234\\5\end{array}\]

The perversity functions are given in the following table.

$$\begin{array}{|c|ccccc|}
\cline{1-6}
  & T_1 & T_2 & T_3 & T_4 & T_5\\
\cline{1-6}
\PSL_3(r) & 0 & 3 & 3 & 3 & 2\\
\PSU_3(q) & 0 & 6 & 6 & 6 & 4\\
\cline{1-6}
\end{array}$$

\smallskip
In addition, there is a $2$-step perverse equivalence for $M_{22}$ and
for $M_{10}$
given by the composite $\per$-values $(0,1,1,0,0),(0,0,0,0,1)$.
$$kB_0(M_{22})\sim kB_0(M_{10})\xrightarrow{(0,0,0,0,1)} \bullet
\xrightarrow{(0,1,1,0,0)} kB_0(N).$$

\subsubsection{The group $\PSL_3(4)$}

Let $G=\PSL_3(4)$. Okuyama has shown that the principal blocks of $G$ and $N$
are derived equivalent \cite[Example 4.6]{Ok1}. In this section, we
produce a perverse equivalence.

\smallskip
\paragraph{Simple modules}

There are five simple modules in the principal $3$-block, of dimensions $1$,
$15$, $15$, $15$, and $19$. We label the $19$-dimensional module $S_5$, and
the three $15$-dimensional simple modules $S_2$ to $S_4$. There is an
$\Sym3$-group of outer automorphisms that permutes transitively $S_2$, $S_3$
and $S_4$. We choose the $S_i$ so that the Green correspondents $C_i$ are
\[ C_1=1,\quad C_2=\begin{array}{c}5\\12\\5\end{array},\quad C_3=\begin{array}{c}5\\13\\5\end{array},\quad C_4=\begin{array}{c}5\\14\\5\end{array},\quad C_5=\begin{array}{c}234\\55\\234\end{array}.\]

\paragraph{The perverse equivalence}

There is a perverse equivalence between $kB_0(G)$ and $kB_0(N)$ with the $\per$-values on the left, which makes the decomposition matrix look as follows. We 
provide the generic degree for the corresponding irreducible characters
of $\PSL_3(q)$.

\begin{center}\begin{tabular}{|c|c|ccccc|}
\hline $\per$ & Ord.\ Char & $S_1$ & $S_5$ & $S_2$ & $S_3$ & $S_4$
\\ \hline $0$ & $1$ & 1&&&&
\\ $2$ & $q(q+1)$ & 1&1&&&
\\ $3$ & $(q+1)(q^2+q+1)/3$ & 1&1&1&& 
\\ $3$ & $(q+1)(q^2+q+1)/3$ & 1&1&&1&
\\ $3$ & $(q+1)(q^2+q+1)/3$ & 1&1&&&1
\\ \hline & $q^3$ &&1&1&1&1
\\ \hline\end{tabular}\end{center}

\smallskip
The explicit complexes are as follows.
\[\begin{array}{lr}
   X_5:&\;\; 0\to \Proj5\to\Proj{234}\to C_5\to 0.
\\ X_2:&\;\; 0\to \Proj2\to\Proj{34}\to\Proj5 \to C_2\to 0.
\\ X_3:&\;\; 0\to \Proj3\to\Proj{24}\to\Proj5\to C_3\to 0.
\\ X_4:&\;\; 0\to \Proj4\to\Proj{23}\to\Proj5\to C_4\to 0.
\end{array}\]

The cohomology of the complexes above is displayed in the following table.

\smallskip
\begin{center}\begin{tabular}{|c|ccc|c|}
\hline $X_i$ & $H^{-3}$ & $H^{-2}$ & $H^{-1}$ & Total
\\ \hline $5$&&$1/5$&$11$&$5-1$
\\ $2$&$1/5/2$ &$1$ & &$2-5$
\\ $3$&$1/5/3$ &$1$ & &$3-5$
\\ $4$&$1/5/4$ &$1$ & &$4-5$
\\ \hline \end{tabular}\end{center}

\subsubsection{The group $\PSU_3(2)$}

Let $G=\PSU_3(2)=\Cyc3^2\rtimes Q_8$.

We construct a self-perverse equivalence of $kG$ with the $\per$-values on the
left, which makes the decomposition matrix look as follows.
 
\smallskip
\begin{center}\begin{tabular}{|c|c|ccccc|}
\hline $\per$ & Ord.\ Char & $S_1$ & $S_5$ & $S_2$ & $S_3$ & $S_4$
\\ \hline $0$ & $1$ & 1&&&&
\\ $4$ & $q(q-1)$   & &1&&&
\\ $6$ & $(q-1)(q^2-q+1)/3$ & &&1&& 
\\ $6$ & $(q-1)(q^2-q+1)/3$ & &&&1&
\\ $6$ & $(q-1)(q^2-q+1)/3$ & &&&&1
\\ \hline & $q^3$ &1&2&1&1&1
\\ \hline\end{tabular}\end{center}

\smallskip
The explicit complexes are as follows.
\[\begin{array}{lr}
   X_5:&\;\; 0\to \Proj5\to \Proj{234}\to \Proj{234}\to \Proj5\to C_5\to 0.
\\ X_2:&\;\; 0\to \Proj2\to \Proj{34}\to \Proj{234}\to\Proj{25}\to\Proj2\to C_2\to 0.
\\ X_3:&\;\; 0\to \Proj3\to \Proj{24}\to \Proj{234}\to\Proj{35}\to\Proj3\to C_3\to 0.
\\ X_4:&\;\; 0\to \Proj4\to \Proj{23}\to \Proj{234}\to\Proj{45}\to\Proj4\to C_4\to 0.\end{array}\]

The cohomology of the complexes above is displayed in the following table.

\smallskip
\begin{center}\begin{tabular}{|c|cccccc|c|}
\hline $X_i$ & $H^{-6}$ & $H^{-5}$ & $H^{-4}$ & $H^{-3}$ & $H^{-2}$ & $H^{-1}$ & Total
\\ \hline $5$&&&$1/5$&$11$&$11$&$1$&$5$
\\ $2$&$1/5/2$ &$1\oplus 1/5$ &$11$ &$1$&&&$2$
\\ $3$&$1/5/3$ &$1\oplus 1/5$ &$11$ &$1$&&&$3$
\\ $4$&$1/5/4$ &$1\oplus 1/5$ &$11$ &$1$&&&$4$
\\ \hline \end{tabular}\end{center}

\begin{rem}
Note that the perversity function for $\PSU_3(r)$ is twice that
of $\PSL_3(q)$, after identification of the ``unipotent characters''. This
is our reason for providing this perverse equivalence, instead of the
identity.
\end{rem}

\subsubsection{The Mathieu group $M_{22}$}
\label{sec:M22}

Let $G=M_{22}$. By  \cite[Example 4.5]{Ok1}, there is a splendid Morita
equivalence between $kB_0(G)$ and $kB_0(H)$, where $H=M_{10}$. There is an
embedding of $H$ inside $G$ so that $N_H(P)=N$.
The composition of the splendid Morita equivalence from $kB_0(G)$ to $kB_0(H)$
and the derived equivalence $kB_0(H)\to kB_0(N)$ from Section \ref{sec:A6},
yields a derived equivalence $kB_0(G)\to kB_0(N)$.
There can be no perverse equivalence between $kB_0(G)$ and $kB_0(N)$ since
there is none between $kB_0(H)$ and $kB_0(N)$.

\smallskip
\paragraph{Outer automorphisms}

There is an outer automorphism of $M_{22}$, and this will be discussed in Section \ref{sec:M22.2}. Note that the splendid Morita equivalence from $kB_0(G)$ to
$kB_0(H)$ extends to one between $kB_0(M_{22}.2)$ and $kB_0(\GA_6.2^2)$.

\subsection{Automizer $D_8$}

For the group $N=\Cyc3^2\rtimes D_8$, there are five simple $\F_3$-modules, all absolutely simple. The first four are $1$-dimensional, and the last is $2$-dimensional. We denote by $T_4$ the exterior square of the $2$-dimensional module.
The modules $T_2$ and $T_3$ are permuted by the outer automorphism of $D_8$.
There are two Klein four subgroups lying in $D_8$. One acts trivially on $T_2$
and not on $T_3$, while the other acts trivially on $T_3$ and not on $T_2$.
The projective indecomposable modules are as follows.
\[ \Proj1=\begin{array}{c}1\\5\\123\\5\\1\end{array},\quad \Proj2=\begin{array}{c}2\\5\\124\\5\\2\end{array},\quad \Proj3=\begin{array}{c}3\\5\\134\\5\\3\end{array},\quad \Proj4=\begin{array}{c}4\\5\\234\\5\\4\end{array},\quad \Proj5=\begin{array}{c}5\\1234\\555\\1234\\5\end{array}.\]
We also need relative projective modules. There are two conjugacy classes of subgroups of order $3$ in $N$, with representatives $Q_1$ and $Q_2$.
We denote by $M_{1,j},\ldots,M_{4,j}$ the indecomposable summands
of the permutation module $\Ind_{Q_i}^N k$.

\[ M_{1,1}=\begin{array}{c}5\\12\\5\end{array},\quad M_{2,1}=\begin{array}{c}5\\34\\5\end{array},\quad M_{3,1}=\begin{array}{c}12\\5\\12\end{array},\quad M_{4,1}=\begin{array}{c}34\\5\\34\end{array},\]
\[ M_{1,2}=\begin{array}{c}5\\13\\5\end{array},\quad M_{2,2}=\begin{array}{c}5\\24\\5\end{array},\quad M_{3,2}=\begin{array}{c}13\\5\\13\end{array},\quad M_{4,2}=\begin{array}{c}24\\5\\24\end{array}.\]

The perversity and local twist functions are given in the following table.
$$\begin{array}{|c|ccccc|cc|}
\cline{1-8}
  & T_1 & T_2 & T_3 & T_4 & T_5 & \eta(Q_1) & \eta(Q_2)\\
\cline{1-8}
\GS_7           & 0 & 1 & 1 & 0 & 0 & 0 & 0\\
\PSp_4(q),\GS_6 & 0 & 3 & 3 & 4 & 3 & 1 & 1\\
\PSp_4(r)       & 0 & 6 & 6 & 8 & 6 & 2 & 2\\
\PSL_4(q),\GA_8 & 0 & 3 & 5 & 6 & 4 & 2 & 1\\
\PSU_4(r)       & 0 & 6 & 10& 12& 8 & 4 & 2\\
\PSL_5(q)       & 0 & 6 & 8 & 10& 7 & 3 & 2\\
\PSU_5(r)       & 0 & 12& 16& 20&14 & 6 & 4\\
\cline{1-8}
\end{array}$$

In addition, there is a $2$-step perverse equivalence for $\PGL_2(9)=\GA_6.2_2$
given by the composite $\per$-values $(0,1,1,0,0),(0,0,0,0,1)$ (and 
$\eta=0$).

\subsubsection{The group $\PSp_4(2)=\GS_6$}
\label{sec:S6}

Let $G=\PSp_4(2)=\GS_6$.

\smallskip
\paragraph{Simple modules}

There are five simple modules in the principal $3$-block, of dimensions $1$, $1$, $4$, $4$, and $6$. We label the non-trivial $1$-dimensional module $S_5$, the $6$-dimensional simple module $S_4$, and the two $4$-dimensional simple modules $S_2$ and $S_3$. There is an outer automorphism that swaps $S_2$ and $S_3$ and
we choose the labelling so that the Green correspondents $C_i$ are
\[ C_1=1,\quad C_2=\begin{array}{c}3\\5\\3\end{array},\quad C_3=\begin{array}{c}2\\5\\2\end{array},\quad C_4=\begin{array}{c}5\\14\\5\end{array},\quad C_5=4.\]

\paragraph{The perverse equivalence}

There is a perverse equivalence between $kB_0(G)$ and $kB_0(N)$ with
local twist $\eta_{Q_1}=\eta_{Q_2}=1$ and with
the $\per$-values on the left, which makes the decomposition matrix look as follows.
\begin{center}\begin{tabular}{|c|c|ccccc|}
\hline $\per$ & Ord.\ Char & $S_1$ & $S_5$ & $S_2$ & $S_3$ & $S_4$
\\ \hline $0$ & $1$ & 1&&&&
\\ $3$ & $q(q-1)^2/2$ & &1&&& 
\\ $3$ & $q(q^2+1)/2$ & 1&&1&&
\\ $3$ & $q(q^2+1)/2$ & 1&&&1&
\\ $4$ & $q^4$ & 1&1&1&1&1
\\ \hline & $(q-1)(q^2+1)$&&1&&1&
\\  &$(q-1)(q^2+1)$&&1&1&&
\\  &$q(q-1)(q^2+1)$&&&1&&1
\\  &$q(q-1)(q^2+1)$&&&&1&1
\\ \hline\end{tabular}\end{center}

The explicit complexes are as follows.
\[\begin{array}{lr}
   X_5:&\;\; 0\to \Proj5\to \Proj{234}\to M_{4,1}\oplus M_{4,2}\to C_5\to 0.
\\ X_2:&\;\; 0\to \Proj2\to\Proj5\to \Proj3\oplus M_{1,2}\to C_2\to 0.
\\ X_3:&\;\; 0\to \Proj3\to \Proj5\to\Proj2\oplus M_{1,1}\to C_3\to 0.
\\ X_4:&\;\; 0\to \Proj4\to \Proj4\to\Proj{23}\to \Proj5\to C_4\to 0.\end{array}\]
The cohomology of the complexes above is displayed in the following table.
\begin{center}\begin{tabular}{|c|cccc|c|}
\hline $X_i$ & $H^{-4}$ & $H^{-3}$ & $H^{-2}$ & $H^{-1}$ & Total
\\ \hline $5$&&$1/5$&$1$&&$5$
\\ $2$&&$2$&&$1$&$2-1$
\\ $3$&&$3$&&$1$&$3-1$
\\ $4$&$23/5/4$&&$1$&&$4+1-2-3-5$
\\ \hline \end{tabular}\end{center}

\subsubsection{The group $\PSp_4(4)$}

Let $G=\PSp_4(4)$.

\smallskip
\paragraph{Simple modules}

There are five simple modules in the principal $3$-block, of dimensions $1$, $34$, $34$, $50$, and $256$. We label the $50$-dimensional module $S_4$ and the $256$-dimensional module $S_5$. The 
two $34$-dimensional modules $S_2$ and $S_3$ are permuted by
an outer automorphism and we choose the $S_i$ so that the Green correspondents
are $C_i=T_i$.

\smallskip
\paragraph{The perverse equivalence}

There is a perverse equivalence between $kB_0(G)$ and $kB_0(N)$ with local
twist $\eta_{Q_1}=\eta_{Q_2}=2$ and
 the $\per$-values on the left, which makes the decomposition matrix look as follows.

\begin{center}\begin{tabular}{|c|c|ccccc|}
\hline $\per$ & Ord.\ Char & $S_1$ & $S_5$ & $S_2$ & $S_3$ & $S_4$
\\ \hline $0$ & $1$ & 1&&&&
\\ $6$ & $q(q+1)^2/2$ & &1&&& 
\\ $6$ & $q(q^2+1)/2$ & &&1&&
\\ $6$ & $q(q^2+1)/2$ & &&&1&
\\ $8$ & $q^4$ & &&&&1
\\ \hline &$(q+1)(q^2+1)$ & 1&1&1&&
\\ &$(q+1)(q^2+1)$ & 1&1&&1&
\\ &$q(q+1)(q^2+1)$ & &1&&1&1
\\ &$q(q+1)(q^2+1)$ & &1&1&&1
\\ \hline\end{tabular}\end{center}

\smallskip
The explicit complexes are as follows.
\begin{footnotesize}\[\begin{array}{lr}
   X_2:&\;\; \Proj2\to \Proj{5}\to \Proj{35}\to \Proj{234}\to\Proj4\oplus M_{4,2}\to M_{4,2}\to C_2.
\\ X_3:&\;\; \Proj3\to \Proj{5}\to \Proj{25}\to \Proj{234}\to\Proj4\oplus M_{4,1}\to M_{4,1}\to C_3.
\\ X_5:&\;\; \Proj{234}\to\Proj{2344}\to \Proj{455}\to \Proj5\oplus M_{1,1}\oplus M_{1,2}\to M_{1,1}\oplus M_{1,2}\to C_5.
\\ X_4:&\;\; \Proj4\to \Proj4\to\Proj4\to\Proj5\to\Proj{55}\to\Proj{23445}\to\Proj{234}\oplus M_{4,1}\oplus M_{4,2}\to M_{4,1}\oplus M_{4,2}\to C_4.
\end{array}\]\end{footnotesize}
The cohomology of the complexes above is displayed in the following table.

\smallskip
\begin{center}\begin{tabular}{|c|cccccccc|c|}
\hline $X_i$ & $H^{-8}$& $H^{-7}$ & $H^{-6}$ & $H^{-5}$ & $H^{-4}$ & $H^{-3}$ & $H^{-2}$ & $H^{-1}$ & Total
\\ \hline $2$&&&$2$&&$1$&$1$&&&$2$
\\  $3$&&&$3$&&$1$&$1$&&&$3$
\\ $5$&&&$1/5$&$1$&&&$1$&$1$&$5$
\\ $4$&$23/5/4$&$23/5$&&$1$&$1$&$1$&$1$&&$4$
\\ \hline \end{tabular}\end{center}

\begin{rem}
This is the first instance where the relatively projective modules needed for the stable equivalence are not concentrated in a single degree. We will see this phenomenon occur for $\PSL_4(2)$, $\PSU_4(4)$, $\PSL_5(2)$ and $\PSU_5(4)$ below. If one merely runs the algorithm to find a perverse equivalence, the extra factor needed in degree $-1$ is a module \emph{filtered} by relative projective modules, but not itself relative projective. In the cases above, the (indecomposable) module obtained by shifting each relatively projective module from degree $-2$ into degree $-1$ is of dimension $18$, so cannot be relatively projective without being projective. In the next three cases we will have to add relatively
projective modules in degrees down to $-6$ in the case of $\PSU_5(4)$.
\end{rem}

\subsubsection{The group $\PSL_4(2)=\Alt8$}
\label{sec:A8}

Let $G=\PSL_4(2)=\GA_8$. There is an easy perverse equivalence constructed
by Okuyama \cite[Example 4.3]{Ok1}: the perversity function vanishes on
all simple modules except one, where the $\per$-value is $1$.
However, this is not compatible with the Deligne--Lusztig theory for $\PSL_4(q)$, and we provide a different perverse equivalence.

\smallskip
\paragraph{Simple modules}

There are five simple modules in the principal $3$-block, of dimensions $1$, $7$, $13$, $28$ and $35$. We label the modules $S_1$ to $S_5$ so that they have dimensions $1$, $13$, $35$, $28$ and $7$ respectively. The Green correspondents
$C_i$ are
\[ C_1=1,\quad C_2=\begin{array}{c}3\\5\\3\end{array},\quad C_3=5,\quad C_4=2,\quad C_5=4.\]

\paragraph{The perverse equivalence}

There is a perverse equivalence between $kB_0(G)$ and $kB_0(N)$ with 
local twist $\eta_{Q_1}=2$ and $\eta_{Q_2}=1$ and with the $\per$-values on the left, which makes the decomposition matrix look as follows. 

\begin{center}\begin{tabular}{|c|c|ccccc|}
\hline $\per$ & Ord.\ Char.{} & $S_1$&$S_2$&$S_5$ & $S_3$ & $S_4$
\\ \hline $0$ & $1$ &  1&&&&
\\ $3$ & $q(q^2+q+1)$ &1&1&&&
\\ $4$ & $q^2(q^2+1)$ &&1&1&&
\\ $5$ &$q^3(q^2+q+1)$&1&1&1&1&
\\ $6$ & $q^6$        &1&&&1&1
\\ \hline & $(q-1)^2(q^2+q+1)$&&&1&&
\\ &$q^2(q-1)^2(q^2+q+1)$&&&&&1
\\ &$(q-1)(q^2+1)(q^2+q+1)$&&&&1&
\\ &$q(q-1)(q^2+1)(q^2+q+1)$&&&1&1&1
\\ \hline\end{tabular}\end{center}

The explicit complexes are as follows.
\[\begin{array}{lr}
   X_2:&\;\; 0\to \Proj2\to \Proj{5}\to \Proj3\oplus M_{1,2}\to C_2\to 0.
\\ X_5:&\;\; 0\to \Proj5\to\Proj{345}\to\Proj{234}\oplus M_{4,1}\to M_{4,1}\oplus M_{4,2}\to C_5\to 0.
\\ X_3:&\;\; 0\to \Proj3\to \Proj{34}\to\Proj{45}\to\Proj5\oplus M_{1,1}\to M_{1,1}\oplus M_{1,2}\to C_3\to 0.
\\ X_4:&\;\; 0\to \Proj4\to \Proj4\to\Proj3\to \Proj3\to \Proj4\to M_{4,2}\to C_4\to 0.\end{array}\]
The cohomology of the complexes above is displayed in the following table.
\begin{center}\begin{tabular}{|c|cccccc|c|}
\hline $X_i$ & $H^{-6}$ & $H^{-5}$ & $H^{-4}$ & $H^{-3}$ & $H^{-2}$ & $H^{-1}$ & Total
\\ \hline $2$&&&&$2$&&$1$&$2-1$
\\ $5$&&&$12/5$&$1$&$1$&&$5-2+1$
\\ $3$&&$1/5/3$&&&$1$&$1$&$3-5-1$
\\ $4$&$23/5/4$&$2$&$1$&$1$&&&$4+5-3$
\\ \hline \end{tabular}\end{center}

\subsubsection{The group $\PSU_4(4)$}

Let $G=\PSU_4(4)$.

%There are subgroups of index 325, 1040 and 1105. 325 yields triv,52=S2 and 272=S5. 1105 yields 832=S3.

\smallskip
\paragraph{Simple modules}

There are five simple modules in the principal $3$-block, of dimensions $1$, $52$, $272$, $832$ and $4096$. We label the modules $S_1$ to $S_5$ so that they have dimensions $1$, $52$, $832$, $4096$ and $272$ respectively. The Green
correspondents are $C_i=T_i$.

\smallskip
\paragraph{The perverse equivalence}

There is a perverse equivalence between $kB_0(G)$ and $kB_0(N)$ with 
local twist $\eta_{Q_1}=4$ and $\eta_{Q_2}=2$ and with
the $\per$-values on the left, which makes the decomposition matrix look as follows.

\begin{center}\begin{tabular}{|c|c|ccccc|}
\hline $\per$ & Ord.\ Char.{} & $S_1$&$S_2$&$S_5$ & $S_3$ & $S_4$
\\ \hline $0$ & $1$ &  1&&&&
\\ $6$ & $q(q^2-q+1)$ &&1&&&
\\ $8$ & $q^2(q^2+1)$ &&&1&&
\\ $10$ &$q^3(q^2-q+1)$&&&&1&
\\ $12$ & $q^6$        &&&&&1
\\ \hline &$(q+1)^2(q^2-q+1)$ & 1&1&1&&
\\ &$(q+1)(q^2+1)(q^2-q+1)$ & 1&&1&1&
\\ &$q^2(q+1)^2(q^2-q+1)$ & &&1&1&1
\\ &$q(q+1)(q^2+1)(q^2-q+1)$ & &1&1&&1
\\
\hline\end{tabular}\end{center}

The explicit complexes are too long to write down here, but we make a record of the relatively projective modules involved.
\begin{center}\begin{tabular}{|c|cccc|}
\hline $X_i$& $-4$ & $-3$ & $-2$ & $-1$
\\\hline$X_2$& &&$M_{4,2}$&$M_{4,2}$
\\$X_5$& $M_{1,1}$&$M_{1,1}$&$M_{1,1}\oplus M_{1,2}$&$M_{1,1}\oplus M_{1,2}$
\\ $X_3$&$M_{4,1}$&$M_{4,1}$&$M_{4,1}$&$M_{4,1}$
\\ $X_4$&$M_{4,1}$&$M_{4,1}$&$M_{4,1}\oplus M_{4,2}$&$M_{4,1}\oplus M_{4,2}$
\\ \hline\end{tabular}\end{center}
%X_2:&\;\; 0\to \Proj2\to \Proj5\to \Proj{35}\to\Proj{234}\to \Proj4\oplus M_{4,2}\to M_{4,2}\to C_2\to 0.
%\\ X_5:&\;\; 0\to\Proj5\to\Proj{345}\to\Proj{33445}\to\Proj{233445}\to\Proj{23445}\oplus M_{1,1}\to\Proj{455}\oplus M_{1,1}\to\Proj5\oplus M_{1,1}\oplus M_{1,2}\to M_{1,1}\oplus M_{1,2}\to C_5\to 0.
%\\ X_3:&\;\; 0\to\Proj3\to\Proj{34}\to\Proj{344}\to\Proj{345}\to\Proj{355}\to \Proj{3455}\to \Proj{2345}\oplus M_{4,1}\to \Proj{234}\oplus M_{4,1}\to\Proj4\oplus M_{4,1}\to M_{4,1}\to C_3\to 0.
%\\ X_4:&\;\; 0\to\Proj4\to\Proj4\to\Proj4\to\Proj4\to\Proj{34}\to\Proj{334}\to\Proj{345}\to\Proj{455}\to\Proj{3455}\to\Proj{3455}\oplus M_{4,1}\to\Proj{23445}\oplus M_{4,1}\to\Proj{234}\oplus M_{4,1}\oplus M_{4,2}\to M_{4,1}\oplus M_{4,2}\to C_4\to 0.\end{array}\]\end{footnotesize}\end{landscape}
The cohomology of the complexes above is displayed in the following table.
\begin{center}\begin{tabular}{|c|cccccccccccc|c|}
\hline $X_i$ & $H^{-12}$ & $H^{-11}$ & $H^{-10}$ & $H^{-9}$ & $H^{-8}$ & $H^{-7}$ & $H^{-6}$ & $H^{-5}$ & $H^{-4}$ & $H^{-3}$ & $H^{-2}$ & $H^{-1}$ & Total
\\ \hline $X_2$ &&&&&&&2&&1&1&&&$2$
\\ $X_5$ &&&&&$12/5$&$12$&$1$&$1$&&&$1$&$1$&$5$
\\ $X_3$ &&&$1/5/3$&$12/5$&$2$&&&&$1$&$1$&&&$3$
\\ $X_4$ &$23/5/4$&$5/23$&$2/5$&$12/5$&$1$&&&$1$&$1$&$1$&$1$&&$4$
\\ \hline \end{tabular}\end{center}

\subsubsection{The group $\PSL_5(2)$}

Let $G=\PSL_5(2)$.
In \cite[Theorem 0.2]{koshitanimiyachi2000}  it is shown
 that the principal block of $G$ is splendidly Morita equivalent 
to that of $N$. We provide here a different equivalence.

\smallskip
\paragraph{Simple modules}

There are five simple modules in the principal $3$-block, of dimensions $1$, $124$, $155$, $217$ and $868$. We label the modules $S_1$ to $S_5$ so that they have dimensions $1$, $124$, $217$, $868$ and $155$ respectively. The Green
correspondents $C_i$ are
\[ C_1=1,\quad C_2=2,\quad C_3=4,\quad C_4=3,\quad C_5=5.\]

\paragraph{The perverse equivalence}

There is a perverse equivalence between $kB_0(G)$ and $kB_0(N)$ with
local twists $\eta_{Q_1}=3$ and $\eta_{Q_2}=2$ and with
 the $\per$-values on the left, which makes the decomposition matrix look as follows.

\begin{center}\begin{tabular}{|c|c|ccccc|}
\hline $\per$ & Ord.\ Char & $S_1$ & $S_2$ & $S_5$ & $S_3$ & $S_4$
\\ \hline $0$ & $1$ & 1&&&&
\\ $6$ & $q^2(q^4+q^3+q^2+q+1)$ &&1&&&
\\ $7$ & $q^3(q^2+1)(q^2+q+1)$  &1&1&1&& 
\\ $8$ & $q^4(q^4+q^3+q^2+q+1)$ &&1&1&1&
\\ $10$ & $q^{10}$ &1&&1&&1
\\ \hline &$(q-1)(q^2+1)(q^4+q^3+q^2+q+1)$ &&&1&&
\\ &$(q-1)^2(q^2+q+1)(q^4+q^3+q^2+q+1)$ &&&&1&
\\ &$q^2(q-1)^2(q^2+q+1)(q^4+q^3+q^2+q+1)$ &&&&&1
\\ &$q^3(q-1)(q^2+1)(q^4+q^3+q^2+q+1)$ &&&1&1&1
\\ \hline\end{tabular}\end{center}
The explicit complexes are as follows.

\begin{footnotesize}\[\!\!\!\begin{array}{lr}X_2:&\;\; \Proj2\to\Proj5\to\Proj{35}\to\Proj{234}\to\Proj4\oplus M_{4,2}\to M_{4,2}\to C_2.
\\ X_5:&\;\; \Proj5\to\Proj{345}\to\Proj{23344}\to\Proj{23445}\to\Proj{455}\oplus M_{1,1}\to\Proj5\oplus M_{1,1}\oplus M_{1,2}\to M_{1,1}\oplus M_{1,2}\to C_5.
\\ X_3:&\!\!\!\! \Proj3\to\Proj{34}\to\Proj{45}\to\Proj{55}\to\Proj{3455}\to\Proj{23445}\oplus M_{4,1}\to\Proj{234}\oplus M_{4,1}\oplus M_{4,2}\to M_{4,1}\oplus M_{4,2}\to C_4.
\\ X_4:&\;\; \Proj4\to\Proj4\to\Proj4\to\Proj{34}\to\Proj{35}\to\Proj{55}\to\Proj{2345}\to\Proj{234} \oplus M_{4,1}\to\Proj4\oplus M_{4,1}\to M_{4,1}\to C_3.
\end{array}\]\end{footnotesize}

The cohomology of the complexes above is displayed in the following table.

$$\begin{array}{|c|cccccccccc|c|}
\hline X_i & H^{-10} & H^{-9} & H^{-8} & H^{-7} & H^{-6} & H^{-5} & H^{-4} &
 H^{-3} & H^{-2} & H^{-1} & \text{Total}
\\ \hline X_2 & &&&&2&&1&1&&&2
\\ X_5 & &&&12/5 & 1&1&&&1&1&5-1-2
\\ X_3 & &&1/5/3 &&& 1&1&1&1&&3-5+1
\\ X_4 & 23/5/4 & 5/23 & 2/5 &&&&1&1&&&4+2-5
\\ \hline \end{array}$$

\subsubsection{The group $\PSU_5(4)$}

Let $G=\PSU_5(4)$.

\smallskip
\paragraph{Simple modules}

There are five simple modules in the principal $3$-block, of dimensions $1$, $3280$, $14144$, $52840$ and $1048576$. We label the modules $S_1$ to $S_5$ so that they have dimensions $1$, $52840$, $3280$, $1048576$ and $14144$. The Green
correspondents are $C_i=T_i$.

\smallskip
\paragraph{The perverse equivalence}

There is a perverse equivalence between $kB_0(G)$ and $kB_0(N)$ with
local twist $\eta_{Q_1}=6$ and $\eta_{Q_2}=4$ and with
 the $\per$-values on the left, which makes the decomposition matrix look as follows.

%Use the same ordering as for $\PSL_5(q)$. Double the $p$-values. The modifier is $M_{4,2}$ for the last four places of $C_2$, again double the modifiers for $C_5$. We need that $C_i=T_i$, and then $3$ takes $16$ and has six $M_{4,1}$s as stabilizer. The last one, $T_4$ has $p=20$ and $M_{4,1}$ in six degrees, $M_{4,2}$ in four.
% For S4, this is the complex
% 4 \to 4 \to 4 \to 4 \to 4 \to 34\to 334\to 345\to 455\to 3455\to 334455\to 23344455\to 223344455\to 233444555\to 3445555M\to 2344555M\to 234455MM\to 23445MM\to 234MM\to MM
% 5\to 345\to 33445\to233445\to 234455\to 344555\to 3344555\to 233444555\to 2233444455 M \to 223344455 M\to 234455 MM\to 455MM\to 5MM \to MM

\smallskip
\begin{center}\begin{tabular}{|c|c|ccccc|}
\hline $\per$ & Ord.\ Char & $S_1$ & $S_2$ & $S_5$ & $S_3$ & $S_4$
\\ \hline $0$ & $1$ & 1&&&&
\\ $12$ & $q^2(q^4-q^3+q^2-q+1)$ &&1&&&
\\ $14$ & $q^3(q^2+1)(q^2-q+1)$  &&&1&& 
\\ $16$ & $q^4(q^4-q^3+q^2-q+1)$ &&&&1&
\\ $20$ & $q^{10}$ &&&&&1
\\ \hline &$(q+1)(q^2+1)(q^4-q^3+q^2-q+1)$ & 1&1&1&&
\\ &$(q+1)^2(q^2-q+1)(q^4-q^3+q^2-q+1)$ & 1&&1&1&
\\ &$q^3(q+1)(q^2+1)(q^4-q^3+q^2-q+1)$ & &&1&1&1
\\ &$q^2(q+1)^2(q^2-q+1)(q^4-q^3+q^2-q+1)$ & &1&1&&1
\\ \hline\end{tabular}\end{center}

\smallskip
The explicit complexes are too long to write down here, but we make a record of the relatively projective modules involved.

\smallskip
\begin{center}\begin{tabular}{|c|cccccc|}
\hline $X_i$& $-6$&$-5$&$-4$ & $-3$ & $-2$ & $-1$
\\\hline$X_2$&&&$M_{4,2}$&$M_{4,2}$&$M_{4,2}$&$M_{4,2}$
\\$X_5$&$M_{1,1}$&$M_{1,1}$&$M_{1,1}\oplus M_{1,2}$&$M_{1,1}\oplus M_{1,2}$&$M_{1,1}\oplus M_{1,2}$&$M_{1,1}\oplus M_{1,2}$
\\ $X_3$&$M_{4,1}$&$M_{4,1}$&$M_{4,1}$&$M_{4,1}$&$M_{4,1}$&$M_{4,1}$
\\ $X_4$&$M_{4,1}$&$M_{4,1}$&$M_{4,1}\oplus M_{4,2}$&$M_{4,1}\oplus M_{4,2}$&$M_{4,1}\oplus M_{4,2}$&$M_{4,1}\oplus M_{4,2}$
\\ \hline\end{tabular}\end{center}

\smallskip
The cohomology of the complexes above is displayed in the following table.

\smallskip
\begin{center}\begin{tabular}{|c|cccccccccc|c|}
\hline $X_i$ & $H^{-20}$ & $H^{-19}$ & $H^{-18}$ & $H^{-17}$ & $H^{-16}$ & $H^{-15}$ & $H^{-14}$ & $H^{-13}$ & $H^{-12}$ & $H^{-11}$ &
\\ \hline $X_2$ &&&&&&&&&$2$&&
\\ $X_5$ &&&&&&&$12/5$&$12$&$1$&$1$&
\\ $X_3$ &&&&&$1/5/3$&$12/5$&$2$&&&&
\\ $X_4$ &$23/5/4$&$5/23$&$23/5$&$5/23$&$2/5$&$12/5$&$1$&&&$1$&
\\\hline $X_i$ & $H^{-10}$ & $H^{-9}$ & $H^{-8}$ & $H^{-7}$ & $H^{-6}$ & $H^{-5}$ & $H^{-4}$ & $H^{-3}$ & $H^{-2}$ & $H^{-1}$ & Total
\\ \hline $X_2$ &$1$&$1$&&&&&$1$&$1$&&&$2$
\\ $X_5$ &&&$1$&$1$&$1$&$1$&&&$1$&$1$&$5$
\\ $X_3$ &$1$&$1$&&&&&$1$&$1$&&&$3$
\\ $X_4$ &$1$&$1$&$1$&&&$1$&$1$&$1$&$1$&&$4$
\\ \hline \end{tabular}\end{center}

\subsection{Automizer $SD_{16}$}
\label{se:autoD16}

The group $\Cyc3^2\rtimes SD_{16}$ has seven simple $\F_3$-modules, all
absolutely simple. There are four of dimension $1$ -- denoted $T_1$ to $T_4$ -- and three of dimension $2$ -- denoted
$T_5$ to $T_7$. They are chosen so that $\ker T_3=D_8$ and $\ker T_4=Q_8$.
There is a unique labelling so that the projective indecomposable modules are given below:
\[ \Proj1=\begin{array}{c}1\\7\\35\\6\\1\end{array},\quad \Proj2=\begin{array}{c}2\\7\\45\\6\\2\end{array},\quad \Proj3=\begin{array}{c}3\\6\\15\\7\\3\end{array},\quad \Proj4=\begin{array}{c}4\\6\\25\\7\\4\end{array}\]
\[\Proj5=\begin{array}{c}5\\67\\12345\\67\\5\end{array},\quad \Proj6=\begin{array}{c}6\\125\\677\\345\\6\end{array},\quad \Proj7=\begin{array}{c}7\\345\\667\\125\\7\end{array}.\]

Let $Q$ be a representative of the unique conjugacy class of subgroups of $P$ of order $3$.
There are four modules with vertex $Q$ and trivial source, labelled as below.
\[ M_{1,1}=\begin{array}{c} 135\\67\\135\end{array},\quad M_{2,1}=\begin{array}{c} 245\\67\\245\end{array},\quad M_{3,1}=\begin{array}{c} 67\\135\\67\end{array}, \quad M_{4,1}=\begin{array}{c} 67\\245\\67\end{array}.\]
There are also four modules with vertex $C_3$ and $2$-dimensional source, labelled as below.
\[ M_{1,2}=\begin{array}{c} 135\\6677\\123455\\67\end{array},\quad M_{2,2}=\begin{array}{c} 245\\6677\\123455\\67\end{array},\quad M_{3,2}=\begin{array}{c} 67\\123455\\6677\\135\end{array}, \quad M_{4,2}=\begin{array}{c} 67\\123455\\6677\\245\end{array}.\]

The perversity and local twist functions are given in the following table.

$$\begin{array}{|c|ccccccc|c|}
\cline{1-9}
  & T_1 & T_2 & T_3 & T_4 & T_5 & T_6 & T_7 & \eta(Q)\\
\cline{1-9}
M_{11} & 0 & 4 & 2 & 5 & 7 & 6 & 3 & 1\\
M_{23} & 0 & 1 & 0 & 0 & 2 & 0 & 0 & 0\\
M_{22}.2,\GA_6.2^2, \PSp_4(q).\Cyc{2n} & 0 & 4 & 0 & 4 & 3 & 3 & 3 & 1\\
\PSp_4(r).\Cyc{2n} & 0 & 8 & 0 & 8 & 6 & 6 & 6 & 2\\
\PSL_3(r).\Cyc{n} & 0 & 2 & 0 & 2 & 3 & 3 & 3 & 0\\
\PSU_3(q).\Cyc{n} & 0 & 4 & 0 & 4 & 6 & 6 & 6 & 0\\
HS     & 0 & 7 & 0 & 4 & 3 & 10& 3 & 1\\
\cline{1-9}
\end{array}$$

\subsubsection{The Mathieu group $M_{11}$}
\label{sec:M11}

Let $G=M_{11}$. By  \cite[Example 4.9]{Ok1}, there is a derived equivalence
between $kB_0(G)$ and $kB_0(N)$. In this section we will produce a perverse equivalence between the two blocks.

\smallskip
\paragraph{Simple modules}

There are seven simple modules in the principal $3$-block, of dimensions $1$, $5$, $5$, $10$, $10$, $10$ and $24$. The ordering on the $S_i$ is the chosen
such that $S_2$ and $S_4$ are $5$-dimensional (dual) modules,
$S_3$ and $S_5$ are $10$-dimensional (dual) modules,
$S_6$ is the $24$-dimensional module,
and $S_7$ is the self-dual $10$-dimensional module. The choice of $S_2$ through to $S_7$ is such that the Green correspondents are given below.
\[ C_1=1,\quad C_2=\begin{array}{c}3\\6\\5\end{array},\quad C_3=\begin{array}{c}7\\45\\6\\27\end{array},\quad C_4=\begin{array}{c}5\\7\\3\end{array},\quad C_5=\begin{array}{c}2\\67\\45\\6\end{array},\quad C_6=\begin{array}{c}6\\12\\7\end{array},\quad C_7=4.\]

\paragraph{The perverse equivalence}

There is a perverse equivalence between $kB_0(G)$ and $kB_0(N)$ with 
local twist $\eta_Q=1$ and with
the $\per$-values on the left, which makes the decomposition matrix look as follows.
\begin{center}\begin{tabular}{|c|c|ccccccc|}
\hline $\per$ & Ord.\ Char.{} & $S_1$ & $S_3$ & $S_7$ & $S_2$ & $S_4$ & $S_6$ & $S_5$
\\ \hline
   $0$&$1$ &1& & & & & &
\\ $2$&$10$& &1& & & & &
\\ $3$&$10$& & &1& & & &
\\ $4$&$16$&1&1& &1& & &
\\ $5$&$11$&1& & &1&1& &
\\ $6$&$44$& & &1&1&1&1&
\\ $7$&$55$&1&1& &1&1&1&1
\\\hline
      &$10$& & & & & & &1
\\    &$16$&1& & & &1& &1
\\ \hline \end{tabular}\end{center}
The explicit complexes are as follows.
\[\begin{array}{lr} X_3:&\;\; 0\to \Proj3\to \Proj7\to C_3\to 0.
\\ X_7:&\;\; 0\to \Proj7\to \Proj{25}\to M_{2,1}\to C_7\to 0.
\\ X_2:&\;\; 0\to \Proj2\to \Proj6\to \Proj{67}\to \Proj3\oplus M_{4,2}\to C_2\to 0.
\\ X_4:&\;\; 0\to \Proj4\to \Proj5\to \Proj{25}\to \Proj{246}\to M_{2,2}\to C_4\to 0.
\\ X_6:&\;\; 0\to \Proj6\to \Proj5\to \Proj4\to \Proj2\to \Proj5\to \Proj6\to C_6\to 0.
\\ X_5:&\;\; 0\to \Proj5\to \Proj5\to \Proj6\to \Proj{46}\to \Proj{45}\to \Proj{25}\to \Proj{26}\to C_5\to 0.\end{array}\]

The cohomology of the complexes above is displayed in the following table.

\smallskip
\begin{center}\begin{tabular}{|c|ccccccc|c|}
\hline $X_i$ & $H^{-7}$ & $H^{-6}$ & $H^{-5}$ & $H^{-4}$ & $H^{-3}$ & $H^{-2}$ & $H^{-1}$ & Total
\\ \hline $3$& & & & & &$3$&&$3$
\\ $7$& & & & &$1/7$&$1$&&$7$
\\ $2$& & & &$2$&$3$& &$1$& $2-3-1$
\\ $4$& & &$2/7/4$&$1/7,3$&$1$ & & & $4-2+3$
\\ $6$& &$7/34/6$ &$3$ & & &$1$ & & $6-4-7+1$
\\ $5$&$1234/67/5$ &$7/34,1,2$ &$1/7/3$ &$3$ &$3$ &$1$ &$1$ & $5-6+7-3-1$
\\ \hline \end{tabular}\end{center}

\smallskip
Here, ``$1/7,3$'' means a direct sum of $1/7$ and $3$ and ``$7/34,1,2$''
 a direct
sum of $7/34$, $1$ and $2$.

\subsubsection{The Mathieu group $M_{23}$}
\label{sec:M23}

Let $G=M_{23}$.

\smallskip
\paragraph{Simple modules}

There are seven simple modules in the principal $3$-block, of dimensions $1$, $22$, $104$, $104$, $253$, $770$ and $770$. The ordering on the $S_i$ is such that $S_2$ and $S_5$ are $104$-dimensional (dual) modules, $S_3$ is $253$-dimensional, $S_4$ is $22$-dimensional, and $S_6$ and $S_7$ are (dual) $770$-dimensional modules. The choice of $S_2$ and $S_5$ through to $S_7$ is such that the Green correspondents are given below.
\[ C_1=1,\quad C_2=\begin{array}{c}2\\7\\5\end{array},\quad C_3=3,\quad C_4=4,\quad C_5=\begin{array}{c}5\\6\\2\end{array},\quad C_6=6,\quad C_7=7.\]

\paragraph{The perverse equivalence}

There is a perverse equivalence between $kB_0(G)$ and $kB_0(N)$ with
local twist $\eta_Q=0$ and with
the $\per$-values on the left, which makes the decomposition matrix look as follows. This
equivalence has been constructed previously by Okuyama  \cite[Example 4.7]{Ok1}.

\begin{center}\begin{tabular}{|c|c|ccccccc|}
\hline $\per$ & Ord.\ Char.{} & $S_1$ & $S_3$ & $S_4$ & $S_6$ & $S_7$ & $S_2$ & $S_5$
\\ \hline
   $0$&$1$ &1& & & & & &
\\ $0$&$253$& &1& & & & &
\\ $0$&$22$& & &1& & & &
\\ $0$&$770$& & & &1& & &
\\ $0$&$770$& & & & &1& &
\\ $1$&$896$& & &1&1& &1&
\\ $2$&$230$& & &1& & &1&1
\\\hline
      &$896$& & &1& &1& &1
\\   &$2024$&1&1&1&1&1&1&1
\\ \hline \end{tabular}\end{center}

\smallskip
The explicit complexes are as follows.
\[\begin{array}{lr} X_2:&\;\; 0\to \Proj2\to C_2\to 0.
\\ X_5:&\;\; 0\to \Proj5\to \Proj5\to C_5\to 0.\end{array}\]
The cohomology of the complexes above is as follows: the first complex $X_2$ has $H^{-1}(X_2)=4/6/2$, and the second complex $X_5$ has $H^{-2}(X_5)=1234/67/5$ and $H^{-1}(X_5)=1\oplus 7/34$.

\subsubsection{The extended Mathieu group $M_{22}.2$}
\label{sec:M22.2}

Let $G=M_{22}.2$.
In this section we will produce a perverse equivalence between $kB_0(G)$
and $kB_0(N)$.
As we mentioned in \S \ref{sec:M22}, there is a splendid Morita equivalence
between the principal blocks of $M_{22}$ and $M_{10}=\GA_6.2_3$, and this
extends to a splendid Morita equivalence between the principal blocks
of $M_{22}.2$ and $\GA_6.2^2$. As we saw in \S \ref{sec:S6}, there is
a perverse equivalence between the principal blocks of $\GA_6.2^2$ and its
normalizer, and so therefore the same holds for $M_{22}.2$.
As we have already proved that the perverse equivalence exists, and have
described it earlier, we simply give the simple modules, and then the decomposition matrix and $\per$-values.

\smallskip
\paragraph{Simple modules}

There are seven simple modules in the principal $3$-block, of dimensions $1$, $1$, $55$, $55$, $98$, $231$ and $231$. The ordering on the $S_i$ is such that $S_2$ and $S_4$ are $231$-dimensional modules, $S_3$ and $S_6$ are $55$-dimensional modules, $S_5$ is $98$-dimensional, and $S_7$ is the non-trivial $1$-dimensional module. The choice of $S_2$ through to $S_4$ and $S_6$ is such that the Green correspondents are given below.
\[ C_1=1,\quad C_2=\begin{array}{c}7\\34\\6\end{array},\quad C_3=3,\quad C_4=\begin{array}{c}6\\12\\7\end{array},\quad C_5=\begin{array}{c}5\\67\\5\end{array},\quad C_6=2,\quad C_7=4.\]

\paragraph{The perverse equivalence}

There is a perverse equivalence between $kB_0(G)$ and $kB_0(N)$ with the $\per$-values on the left, which makes the decomposition matrix look as follows.

\smallskip
\begin{center}\begin{tabular}{|c|c|ccccccc|}
\hline $\per$ & Ord.\ Char.{} & $S_1$ & $S_3$ & $S_5$ & $S_7$ & $S_6$ & $S_2$ & $S_4$
\\ \hline
   $0$&$1_1$&1& & & & & &
\\$0$&$55_1$& &1& & & & &
\\$3$&$154_1$&1&1&1& & & &
\\ $3$&$1_2$& & & &1& & &
\\$3$&$55_2$& & & & &1& &
\\ $4$&$385_1$&1& &1& &1&1&
\\ $4$&$385_2$& &1&1&1& & &1
\\\hline
    &$154_2$& & &1&1&1& &
\\   &$560$& & &1& & &1&1
\\ \hline \end{tabular}\end{center}

\subsubsection{The Higman--Sims group $HS$}

Let $G=HS$. By \cite[Example 4.8]{Ok1}, there is a derived equivalence between $kB_0(G)$ and $kB_0(N)$. In this section we will produce a perverse equivalence between the two blocks.

\smallskip
\paragraph{Simple modules}

There are seven simple modules in the principal $3$-block, of dimensions $1$, $22$, $154$, $321$, $748$, $1176$ and $1253$. The ordering on the $S_i$ is such that the dimensions of the $S_i$ are (in order) $1$, $1176$, $154$, $321$, $1253$, $748$ and $22$. The Green correspondents are given below.
\[ C_1=1,\quad C_2=\begin{array}{c}7\\34\\6\end{array},\quad C_3=3,\quad C_4=\begin{array}{c}6\\12\\7\end{array},\quad C_5=\begin{array}{c}5\\67\\5\end{array},\quad C_6=\begin{array}{c}25\\67\\25\end{array},\quad C_7=4.\]

\paragraph{The perverse equivalence}

There is a perverse equivalence between $kB_0(G)$ and $kB_0(N)$ with local
twist $\eta_Q=1$ and with the $\per$-values on the left, which makes the decomposition matrix look as follows.
\begin{center}\begin{tabular}{|c|c|ccccccc|}
\hline $\per$ & Ord.\ Char.{} & $S_1$ & $S_3$ & $S_7$ & $S_5$ & $S_4$ & $S_2$ & $S_6$
\\ \hline
   $0$&$1$   &1& & & & & &
\\ $0$&$154$ & &1& & & & &
\\ $3$&$22$  & & &1& & & &
\\ $3$&$1408$&1&1& &1& & &
\\ $4$&$1750$& &1&1&1&1& &
\\ $7$&$2750$& & & &1&1&1&
\\ $10$&$3200$&1& &1&1& &1&1
\\\hline
      &$770$& & &1& & & &1
\\    &$1925$&1& & & & &1&1
\\ \hline \end{tabular}\end{center}

\smallskip
The explicit complexes are as follows.
\[\begin{array}{lr}
   X_7:&\;\; \Proj7\to \Proj{25}\to M_{2,1}\to C_7.
\\ X_5:&\;\; \Proj5\to \Proj{67}\to \Proj5\oplus M_{3,1}\to C_5.
\\ X_4:&\;\; \Proj4\to \Proj2\to \Proj5\to \Proj6\to C_4.
\\ X_2:&\;\; \Proj2\to \Proj6\to \Proj6\to \Proj2\to \Proj4\to \Proj5\to \Proj7\to C_2.
\\ X_6:&\;\; \Proj6\to \Proj6\to \Proj6\to \Proj6\to \Proj2\to \Proj2\to \Proj6\to \Proj6\to \Proj{47}\to \Proj{25}\to C_6.\end{array}\]
The cohomology of the complexes above is displayed in the following table.

\smallskip
\begin{footnotesize}
\begin{center}\begin{tabular}{|c|cccccccccc|c|}
\hline $X_i$ & $H^{-10}$ & $H^{-9}$ & $H^{-8}$ & $H^{-7}$ & $H^{-6}$ & $H^{-5}$ & $H^{-4}$ & $H^{-3}$ & $H^{-2}$ & $H^{-1}$ & Total
\\ \hline $7$& & & & & & & & $1/7$& $1$ & & $7$
\\ $5$& & & & & & & & $5$ & & $13$& $5-1-3$
\\ $4$& & & & & & & $5/7/4$& & $1$ & & $4+1-5-7$
\\ $2$& & & &$2$ &$1/7/3$ &$15/77/34$ & $5$& & $3$& & $2+3+7-4$
\\ $6$& $A_1$& $A_2$& $A_3$& $15/77/34$& $7/45$& &$1/7/3$ &$3$ & & $1$& $6-2+4-5-7-7$
\\ \hline \end{tabular}\end{center}
\end{footnotesize}

\smallskip
Here, $A_1=12/77/345/6$, $A_2=125/77/34$ and $A_3=12/77/345$.

\smallskip
\paragraph{A non-principal block of $HS$}
\label{se:nonprincipal}

We have $N_G(P)\simeq\Cyc2\times (P\rtimes E)$. We denote by $A$
(resp. $B$) the unique
non-principal block of $\BF_3 G$ (resp. $\BF_3N$) with defect group $P$. We
have a canonical isomorphism
$B\iso \BF_3P\rtimes E$ and we label simple modules for $B$ as described in
\S \ref{se:autoD16}. By \cite[Theorem 0.2]{KoKuWa}, there is a derived equivalence
between $A$ and $B$, and we show it is perverse. Let us recall the construction.

The block $A$ has seven simple modules. We denote by $S_1$ and $S_3$
the (dual) simple modules of dimension $49$, by $S_2$ and $S_4$ the simple
modules of dimension $154$, by $S_5$ the simple module of dimension $77$
and by $S_6$ and $S_7$ the (dual) simple modules of dimension $770$. The
choice is such that the Green correspondents are given below.

$$C_1=\begin{matrix}1\\7\\3\end{matrix},\ C_2=2,\ 
C_3=\begin{matrix}3\\6\\1\end{matrix},\ C_4=4,\ C_5=5,\ C_6=6,\ C_7=7.$$

There is a perverse equivalence between $A$ and $B$ with zero local
twists and with the 
$\per$-values on the left, which makes the decomposition matrix look as follows.
\begin{center}\begin{tabular}{|c|c|ccccccc|}
\hline $\per$ & Ord.\ Char.{} & $S_5$ & $S_2$ & $S_4$ & $S_6$ & $S_7$ & $S_1$ & $S_3$
\\ \hline
   $0$&$77$    &1& & & & & &
\\ $0$&$154_1$ & &1& & & & &
\\ $0$&$154_2$ & & &1& & & &
\\ $0$&$770_1$ & & & &1& & &
\\ $0$&$770_2$ & & & & &1& &
\\ $1$&$896_1$ &1& & &1& &1&
\\ $1$&$896_2$ &1& & & &1& &1
\\\hline
      &$175$   &1& & & & &1&1
\\    &$1925$  &1&1&1&1&1& & 
\\ \hline \end{tabular}\end{center}

\smallskip
The explicit complexes are as follows.
\[\begin{array}{lr}
   X_1:&\;\; 0\to \Proj1\to C_1\to 0.
\\ X_3:&\;\; 0\to \Proj3\to C_3\to 0.
\end{array}\]
We have $H^{-1}(X_1)=5/6/1$ and $H^{-1}(X_3)=5/7/3$.

\smallskip
The outer automorphism of order $2$ of $HS$ swaps the simple modules 
$S_1$ and $S_3$. It follows that the perversity function is equivariant.

\section{Prime $2$}
\label{se:ell2}
In this section, we assume that $\ell=2$.

\subsection{Defect $2\times 2$}
Let $G=\PSL_2(q)$ where $q\equiv 3,5\pmod 8$. We have a splendid Morita equivalence between $B_0(G)$ and
$B_0(\GA_4)$ when $q\equiv 3\pmod 8$ (resp. $B_0(\GA_5)$ when
$q\equiv 5\pmod 8$) \cite{Er}. It can be checked to be compatible with
automorphisms.

There is a perverse equivalence between $kB_0(\GA_5)$ and
$kB_0(\GA_4)$ \cite{Ri1} \cite[\S 3]{Ri5}.
We denote by $T_2$ the non-trivial simple $\BF_2\GA_4$-module.
There are three simple $B_0(\GA_5)$-modules. Over $\BF_2$, the
$2$-dimensional modules $S_{2,1}$ and $S_{2,1}$ amalgamate into
a $4$-dimensional simple module $S_2$.
The Green correspondents are
\[C_1=1,\quad C_2=\begin{array}{c}2\\2\end{array}.\]
There is a perverse equivalence between $B_0(G)$ and $B_0(N)$
with the $\per$-values on the left, which makes the decomposition matrix look as follows.

\smallskip

\begin{center}\begin{tabular}{|c|c|ccc|}
\hline $\per$ & Ord.\ Char.{} & $S_1$ & $S_{2,1}$ & $S_{2,2}$ \\ \hline
   $0$&$1$   &1& & 
\\ $1$&$3_1$ &1&1& 
\\ $1$&$3_2$ &1& &1
\\\hline
      &$5_1$ &1&1&1
\\ \hline \end{tabular}\end{center}
We have
$$X_2= 0\to \Proj2\to C_2\to 0.$$
The cohomology is
$H^{-1}(X_2)=\Omega^{-1}(C_2)=11/2$.

\begin{rem}
Note that \cite[\S 6.3]{Rou3} provides a direct proof of Brou\'e's equivariant
conjecture for blocks with Klein four defect groups, and it is easily
seen that the equivalence constructed is perverse. The existence of
derived equivalences for blocks with Klein four defect groups
is due to Linckelmann \cite[Corollary 1.5]{Li1}.
\end{rem}

\subsection{Defect $2\times 2\times 2$}

\subsubsection{The group $\PSL_2(8)$}
\label{se:l28}

Let $G=\PSL_2(8)$.
A splendid Rickard equivalence has been constructed
in \cite[\S 2.3, Example 2]{Rou1}. It is
not perverse, but we show below that there is a two-step perverse
equivalence in that case. The method used here is similar to that
developed by Okuyama \cite{Ok1}.

\smallskip
Let $P$ be a Sylow $2$-subgroup of $G$ and $N=N_G(P)$. We have
$P\simeq\Cyc2^3$ and $N/P\simeq \Cyc7$.
There are two non-trivial
simple $\BF_2 N$-modules, $T_2$ and $T_3$. The labelling is chosen so that
\[\Proj1=\begin{array}{c}1\\2\\3\\1\end{array},\quad
\Proj2=\begin{array}{c}2\\233\\11123\\2\end{array},\quad 
\Proj3=\begin{array}{c}3\\11123\\223\\3\end{array}\]
Let $S_2$ be the $6$-dimensional irreducible $\BF_2 G$-module and
$S_3$ the $12$-dimensional one. We have
\[\Proj{S_1}=\begin{array}{c}S_1\\S_2\\S_1^3S_3\\S_2^2\\S_1^3S_3\\S_2\\S_1
\end{array},\quad \Proj{S_2}=\begin{array}{c}S_2\\S_1^3\\S_2^3\\S_1^6S_3\\S_2^3\\
S_1^3S_3\\S_2\end{array},\quad
\Proj{S_3}=\begin{array}{c}S_3\\S_2\\S_1\\S_2\\S_1\\S_2\\S_3\end{array}.\]
The Green correspondents are
\[C_2=\begin{array}{c}3\\2\end{array},\quad
C_3=\begin{array}{c}2\\23\\3\end{array}.\]

\smallskip
Consider $A''$ an $\BF_2$-algebra with a perverse equivalence with
$B_0(G)$ with perversity function $(0,1,0)$. The images of the simple
$A$-modules under the corresponding stable equivalence are
$$C''_1=S_1,\ C''_2=\Omega^{-1}\left(
 \begin{matrix}S_1^3\oplus S_3\\S_2\end{matrix}\right),\ C''_3=S_3$$
while the (non-simple) image under the derived equivalence is
$$X''_2=0\to\Proj2 \to C''_2\to 0.$$

\smallskip
Consider $A'$ an $\BF_2$-algebra with a perverse equivalence with
$kN$ with perversity function $(0,2,3)$. The images of the simple
$A$-modules under the corresponding stable equivalence are
\[C'_1=1,\quad C'_2=\Res_N^G C''_2,\quad C'_3=C_3\]
while the images under the derived equivalence are
$$X'_2=0\to\Proj3\to\Proj3\to\Proj2\to C_3\to 0$$
$$X'_3=0\to \Proj2\to \Proj{23}\to C'_2.$$

\smallskip
The composite equivalence
$$A''\mstab \xrightarrow[\sim]{(0,1,0)}B_0(\BF_2 G)\mstab\xrightarrow[\sim]{\Res}
\BF_2 N\mstab \xrightarrow[\sim]{(0,2,3)^{-1}} A'\mstab$$
sends simple modules to simple modules, hence comes from
a Morita equivalence between $A''$ and $A'$ \cite[Theorem 2.1]{Li2}.
Hence, we may assume that $A''=A'$, and the composition
$$D^b(B_0(\BF_2 G))\xrightarrow[\sim]{(0,1,0)^{-1}}
D^b(A')\xrightarrow[\sim]{(0,2,3)}D^b(\BF_2 N)$$
lifts the stable equivalence induced by restriction. Thus there is a $2$-step perverse equivalence
\[B_0(\BF_2G)\xleftarrow{(0,1,0)} A' \xrightarrow{(0,2,3)} \BF_2 N,\]
where the algebra $A'$ is acted on by $\Cyc3$, and the equivalences are
compatible with the action of $\Cyc3=\Out(\PSL_2(8))$.

\subsubsection{The Ree groups}
Let $G={^2G}_2(q)$ where $q=3^{2n+1}$. Note that
${^2G}_2(3)\simeq\PSL_2(8)\rtimes \Cyc{3}$.
There is a splendid Morita equivalence
between $B_0(G)$ and $B_0({^2G}_2(3))$ \cite[Example 3.3]{Ok1}. Therefore
we obtain from \S \ref{se:l28} a derived equivalence as a composition
of two perverse equivalences.

%\paragraph{Self-derived equivalences for $\BF_2\GA_4$}
%Let $A=\BF_2\GA_4$. We denote by $S$ the non-trivial simple $A$-module.
%There is a self-derived equivalence $F$ of $A$, perverse with respect
%to the perversity function $\per$ with $\per(k)=0$ and $\per(S)=2$. One
%check as in \S \ref{se:selfderived} that every standard self-equivalence fixing
%the trivial module is of the form $F^n$ for some $n\in\BZ$.

\subsubsection{The Janko group $J_1$}
Let $G=J_1$. In this case, a splendid Rickard equivalence has been
constructed by Gollan and Okuyama \cite{GoOk} and we recall their construction,
with a more direct proof.

\smallskip
\paragraph{Simple modules}
There are four simple modules over $\BF_2$ in the principal block of $G$,
of dimensions $1$, $20$, $76$ and $112$. We label the simple modules
$S_1,\ldots,S_4$ by increasing dimension.

Let $P$ be a Sylow $2$-subgroup of $G$, and write $N=N_G(P)$. 
We have $N\simeq \Cyc2^3\rtimes (\Cyc7\rtimes\Cyc3)$. There are three
non-trivial $\BF_2N$-modules: $T_2$ of dimension $2$, and $T_3$ and
$T_4$ of dimension $3$. The labelling is chosen so that
\[\CP(1)=\begin{array}{c} 1\\3\\4\\1\end{array},\quad 
\CP(2)=\begin{array}{c} 2\\33\\44\\2\end{array},\quad
\CP(3)=\begin{array}{c} 3\\344\\1234\\3\end{array},\quad
\CP(4)=\begin{array}{c} 4\\1234\\334\\4\end{array}.\]

Let $H=\PSL_2(8).3$. 
There are four simple modules over $\BF_2$ in the principal block of $H$,
of dimensions $1$, $2$, $6$ and $12$. We denote by $S'_2$ the 
$12$-dimensional one, by $S'_3$ the $6$-dimensional one and by
$S'_4$ the $2$-dimensional one.

\smallskip
\paragraph{The equivalence}
 The construction
of \S \ref{se:constructionglue} extends to the case of $P\simeq \Cyc2^3$
(see \cite[\S 6.4]{Rou3}). 
Let $Q$ be subgroup of $P$ of order $2$ (there is a unique $N$-conjugacy
class of such subgroups). We have $C_G(Q)/Q\simeq\GA_5$, while
$C_N(Q)/Q\simeq\GA_4$. We define $\CE=\{V_{4,1},V_{4,2}\}$ to be the set of
non-trivial simple modules of the principal block of $kC_G(Q)/Q$, and
we construct as in \S \ref{se:constructionglue}
(see \cite[\S 6.4]{Rou3}) a complex
that induces a splendid standard stable equivalence
$F:\BF_2 B_0(G)\mstab\iso \BF_2 N\mstab$. Let $C_i=F(S_i)$. Let $P_2$ be
the projective cover of the non-trivial simple $\BF_2(C_H(Q)/Q)$-module 
and let $M=\Ind_{C_H(Q)}^H\Res_{C_H(Q)}^{C_H(Q)/Q}P_2$.

We have
$$C_2\simeq\Res^H_N S'_2 \text{ and }
C_4\simeq 0\to M\to V\to 0$$
where $V=\begin{matrix}2 44\\233\end{matrix}$.

\smallskip
Let $A$ be an $\BF_2$-algebra equipped with a standard perverse
equivalence $D^b(A)\iso D^b(\BF_2B_0(G))$, corresponding to
the perversity function $(0,0,1,0)$.
Let $L$ be the largest submodule of $\Proj{S_3}$ containing $S_3$ as a 
submodule and such that $L/S_3$ has no composition factor $S_3$. Then,
$\Res^G_N(\Omega^{-1}L)\simeq \Res^H_N S'_3$ in $kN\mstab$. Furthermore,
$\Res_Q S'_3$ is projective, and hence $F(\Omega^{-1}L)\simeq S'_3$.

\smallskip
Let $A'$ be an $\BF_2$-algebra equipped with a standard perverse
equivalence $D^b(A')\iso D^b(\BF_2B_0(H))$, corresponding to
the perversity function $(0,0,0,1)$.
Let $L'$ be the largest submodule of $\Proj{S'_4}$ containing $S'_4$ as a
submodule and such that $L'/S'_4$ has no composition factor $S'_4$. Then,
$L''=\Res_N^H\Omega^{-1}(L')$ is an indecomposable module of dimension
$72$.  There is an exact sequence
$$0\to M\to \Proj{23344}\oplus V\to L''\to 0$$
showing that $C_4\simeq L''$ in $\BF_2N\mstab$.

\smallskip
We have a diagram of standard stable equivalences
$$\xymatrix{
A\mstab\ar[rr]^-{(0,0,1,0)}_-\sim && \BF_2B_0(G)\mstab\ar[dr]^F_-\sim & &
\BF_2B_0(H)\mstab \ar[dl]_-{\Res}^-\sim && A'\mstab \ar[ll]_-{(0,0,0,1)}^-\sim \\
& & & kN\mstab
}$$
The set of images of simple $A$-modules in $kN\mstab$
coincide with that of simple $A'$-modules. It follows that the composite
equivalence $A\mstab\to A'\mstab$ comes from a Morita equivalence
\cite[Theorem 2.1]{Li2}. So,
we have obtained a $2$-step perverse equivalence
$$kB_0(G) \xleftarrow{(0,0,1,0)} A \xrightarrow{(0,0,0,1)} kB_0(H).$$

\subsection{$\PSL_2(\ell^n)$}
Let $G=\PSL_2(\ell^n)$ for some integer $n\ge 1$ and let
$\tG=\Aut(G)$. Okuyama \cite{Ok2} has
constructed a sequence of derived equivalences as in \S \ref{se:Okuyama}.
The sets $I_r$ used by Okuyama are invariant under $\Out(G)$. It follows
that there is a complex $C$ of $kB_0\bigl(N_{\Delta}(G,\tG)\bigr)$-modules
whose restriction to $G\times N_G(P)^\opp$ is a two-sided tilting complex.
It actually induces a splendid Rickard equivalence.

Note that the equivalences defined by Okuyama are not perverse in general.
It is not known if they are compositions of perverse equivalences.

\end{document}